\newtheorem{theorem}{Theorem}[section]
\newtheorem{cor}[theorem]{Corollary}
\newtheorem{lemma}[theorem]{Lemma}
\newtheorem{theo}[theorem]{Theorem}
\newtheorem{rem}[theorem]{Remark}
\newtheorem{exa}[theorem]{Example}
\newtheorem{definition}[theorem]{Definition}
\newtheorem*{Definition*}{Definition}
\numberwithin{equation}{section}
\def\qed{\hfill \ifhmode\unskip\nobreak\fi\quad\ifmmode\Box\else$\Box$\fi\\ }
\begin{document}

\title[6-dimensional oriented $S^1$-manifold with 4 fixed points]{Circle actions on 6-dimensional oriented manifolds with 4 fixed points}
\author{Donghoon Jang}
\thanks{Donghoon Jang was supported by the National Research Foundation of Korea(NRF) grant funded by the Korea government(MSIT) (2021R1C1C1004158).}
\address{Department of Mathematics, Pusan National University, Pusan, Korea}
\email{donghoonjang@pusan.ac.kr}

\begin{abstract}
In this paper, we classify the fixed point data (weights and signs at the fixed points), of a circle action on a 6-dimensional compact oriented manifold with 4 fixed points. We prove that it agrees with that of a disjoint union of rotations on two 6-spheres, or that of a linear action on $\mathbb{CP}^3$. The former case includes that of Petrie's exotic action on $\mathbb{CP}^3$.
\end{abstract}

\maketitle

\tableofcontents

\section{Introduction}

$\indent$

Let the circle group $S^1$ act on a compact connected oriented manifold $M$ with non-empty finite fixed point set. Because the dimension of $M$ has the same parity as its fixed point set, the dimension of $M$ is even. Let $p$ be an isolated fixed point. The tangent space $T_pM$ at $p$ decomposes into $n$ irreducibles $T_pM=\oplus_{i=1}^n L_i$, where $\dim M=2n$.
On each $L_i$, the circle group acts as multiplication by $g^{w_{pi}}$ for all $g \in S^1$, for some non-zero integer $w_{pi}$. We may choose an orientation of $L_i$ so that $w_{pi}$ is positive for all $i$. The positive integers $w_{p1},\cdots,w_{pn}$ are called \textbf{weights} at $p$. Let $\epsilon(p)=+1$ if the orientation on $M$ and the orientation on the representation space $\oplus_{i=1}^n L_i$ agree, and $\epsilon(p)=-1$ otherwise, and call it the \textbf{sign} of $p$. Define the \textbf{fixed point data} of $p$ to be an ordered pair $(\epsilon(p),\{w_{p1},\cdots,w_{pn}\})$ where $\{w_{p1},\cdots,w_{pn}\}$ is a multiset, and simplify it as $\{\epsilon(p),w_{p1},\cdots,w_{pn}\}$ by writing the sign first and the weights next. 
By the \textbf{fixed point data} of $M$ we mean a collection $\cup_{p \in M^{S^1}} \{\epsilon(p),w_{p1},\cdots,w_{pn}\}$ of the fixed point datum of the fixed points of $M$.

If the action on $M$ has exactly one fixed point, say $p$, then $M$ must be the point itself, that is, $M=\{p\}$; in particular $\dim M=0$. If the action on $M$ has exactly two fixed points, the fixed point data of $M$ agrees with that of a rotation of $S^{2n}$ (Theorem \ref{t28}); in particular such an $M$ exists in any even dimension. If there is an odd number of fixed points, the dimension of the manifold is a multiple of 4 (see Corollary \ref{c24}). Examples with 3 fixed points exist in dimension 4, 8, and 16; circle actions on the complex projective space $\mathbb{CP}^2$, the quaternionic projective space $\mathbb{HP}^2$, and the octonionic projective space $\mathbb{OP}^2$. On the other hand, three fixed points cannot exist in dimension 12 \cite{J5}.

Suppose that there exists a $2n$-dimensional $M$ with $k$ fixed points, where $n>1$. By taking an equivariant connected sum along free orbits of $M$ and another $2n$-dimensional compact connected oriented $S^1$-manifold that has no fixed points, we can construct infinitely many $2n$-dimensional compact connected oriented $S^1$-manifolds with $k$ fixed points. In dimension 2, the 2-sphere is the only compact conected oriented $S^1$-manifold with non-empty finite fixed point set.

While we can construct infinitely many compact connected oriented $S^1$-manifolds in prescribed dimension and with prescribed number of fixed points if there exists one, the number of types of fixed point data could be finite. This is true if $k \leq 2$ as noted above, and if $\dim M \leq 6$; see \cite{J2} and \cite{J6}.

Circle actions with 4 fixed points have been studied on different types of 6-dimensional compact manifolds. Ahara studied almost complex manifolds with Todd genus 1 and $\int_M c_1^3 \neq 0$ \cite{A}, Tolman studied Hamiltonian actions on symplectic manifolds \cite{T}, and the author studied almost complex manifolds \cite{J3}.

In this paper, we prove that if a 6-dimensional oriented $M$ has 4 fixed points, only two types of fixed point data occur; $S^6 \sqcup S^6$ type and $\mathbb{CP}^3$ type.

\begin{theorem} \label{t11}
Let the circle act on a 6-dimensional compact connected oriented manifold $M$ with 4 fixed points. Then one of the following holds for the fixed point data of $M$.
\begin{enumerate}
\item $\{+,a,b,c\},\{-,a,b,c\},\{+,d,e,f\},\{-,d,e,f\}$ for some positive integers $a$, $b$, $c$, $d$, $e$, and $f$.
\item $\{+,a,a+b,a+b+c\}, \{-,a,b,b+c\}, \{+,b,c,a+b\}, \{-,c,b+c,a+b+c\}$. for some positive integers $a$, $b$, and $c$.
\end{enumerate}
\end{theorem}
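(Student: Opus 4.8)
The natural tool here is the Atiyah–Bott–Berline–Vergne localization theorem, together with equivariant cohomology / K‑theory constraints on the weights and signs. The plan is to organize the argument around the classical relations that the fixed point data of a $6$‑dimensional oriented $S^1$‑manifold with isolated fixed points must satisfy: first, for each fixed point $p$ of sign $\epsilon(p)$ and weights $w_{p1},w_{p2},w_{p3}$, the contributions $\epsilon(p)\prod_i w_{pi}^{-1}$ and similar rational functions sum to a polynomial (in fact, to a constant for low enough degree), and second, integrality constraints from the signature and from the (equivariant) $\chi_y$‑ or $\widehat A$‑type genera. With $k=4$ fixed points, these become a finite list of Diophantine equations in the six weights appearing at any two "matched" fixed points.

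**Key steps.**

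First I would record the basic sign relation: since the number of fixed points is even, $\sum_p \epsilon(p) = 0$, so two fixed points have sign $+$ and two have sign $-$ (the case of all signs equal, forcing zero total, is impossible with $4$ points; and $3$–$1$ splits are ruled out by the same parity, so it must be $2$–$2$). Next, I would apply localization to the constant function $1$ in each relevant degree: in degree $0$ we get $\sum_p \epsilon(p)/(w_{p1}w_{p2}w_{p3}) = 0$, and in degrees up to $3$ we get $\sum_p \epsilon(p)\, e_j(w_p)^{\pm}/(w_{p1}w_{p2}w_{p3})=0$ type identities, where the numerators are symmetric functions of the weights. Clearing denominators turns these into polynomial identities among the (at most) twelve weights. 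The sign‑cancellation already pairs the fixed points; I would next argue, using the structure of these identities together with the fact that all $w_{pi}$ are positive, that the two plus‑points must share a common multiset-theoretic relationship with the two minus‑points — this is exactly where the two cases bifurcate: either the plus‑point weights coincide pairwise with the minus‑point weights (giving case (1), the $S^6\sqcup S^6$ type, where $\{a,b,c\}$ and $\{d,e,f\}$ are two independent triples each appearing with both signs), or they interlock in the additive pattern $a, a+b, a+b+c$ versus $a,b,b+c$ versus $b,c,a+b$ versus $c,b+c,a+b+c$ (case (2), the $\mathbb{CP}^3$ type).

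**The main obstacle.**

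The hard part will be the combinatorial case analysis that extracts precisely these two patterns from the Diophantine relations, ruling out every other solution. A priori the localization identities have many integer solutions; the work is to show that positivity of the weights plus the full set of relations (including the higher‑degree ones and, crucially, an integrality constraint such as the one coming from the signature or $\widehat A_{S^1}$, which eliminates "fake" solutions that satisfy only the rational identities) forces one of the two listed shapes. I expect to handle this by fixing the smallest weight among the plus‑points, comparing it against the minus‑point weights via the degree‑$0$ relation, and then propagating equalities through the remaining relations; the interlocking case will require recognizing the telescoping pattern $a,\,a+b,\,a+b+c$ as the unique non‑split solution. Along the way I would invoke Theorem \ref{t28} only as motivation (it is the $2$‑fixed‑point analogue) and Corollary \ref{c24} to confirm the dimension‑parity bookkeeping; the genuinely new content is the $4$‑point Diophantine classification, and verifying that both surviving families are actually realized is immediate from the two model actions named in the abstract.
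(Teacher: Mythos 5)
Your proposal correctly identifies the starting point (the $2$--$2$ sign split and the vanishing of $\int_M 1$), but the reason you give for the sign split is wrong: $\sum_p\epsilon(p)=0$ does not follow from the number of fixed points being even; it follows from $\mathrm{sign}(M)=0$ (because $\dim M\equiv 2 \bmod 4$) via the $t=0$ specialization of the Atiyah--Singer formula. More seriously, the core of your plan --- ``clear denominators in the localization identities and do a Diophantine case analysis'' --- is exactly the step you leave blank, and it cannot be completed with only the inputs you list. For an oriented (non--almost-complex) manifold there is no supply of canonical equivariant classes producing your ``degrees up to $3$'' identities, and an $\widehat A$-integrality constraint is not available without a spin hypothesis; essentially the only relations at hand are $\int_M 1=0$ and the signature series $\sum_p\epsilon(p)\prod_i\frac{1+t^{w_{pi}}}{1-t^{w_{pi}}}=\mathrm{const}$. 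These alone admit far too many positive-integer solutions to force the two stated shapes.

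The missing ideas, which the paper supplies, are: (i) each weight $w$ occurs an even number of times over all fixed points (Lemma \ref{l22}), and the smallest and second smallest weights on the $+$ side equal those on the $-$ side (Lemma \ref{l33}, extracted from the signature series); these let one organize the twelve weights into a $3$-regular signed labeled multigraph on the four fixed points with only five possible shapes (Lemma \ref{l41}); and (ii) for each weight $w$ larger than $a_2$, the two endpoints of a $w$-edge lie in one connected component $F$ of $M^{\mathbb{Z}_w}$, which is orientable (Lemma \ref{l25}) and whose normal weights satisfy congruences mod $w$ with prescribed sign behaviour (Lemma \ref{l26}). It is these congruences --- e.g.\ $a\equiv\pm d\bmod b$, forcing relations such as $d=b-a$ --- that, combined with the localization identities, produce the telescoping pattern of Case (2) and eliminate all other solutions. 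Without a substitute for this isotropy-submanifold analysis, your plan stalls exactly at the point you yourself flag as ``the main obstacle.''
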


Note that in Theorem \ref{t11}, Case (2) with $a=c$ belongs to Case (1).
In Case (1) of Theorem \ref{t11}, the fixed point data is the same as the equivariant sum (or a disjoint union) of rotations on two 6-spheres; see Example \ref{e28}. In Case (2) of Theorem \ref{t11}, the fixed point data is the same as a standard linear action of $S^1$ on the complex projective space $\mathbb{CP}^3$; see Example \ref{e29}. On the other hand, in Example \ref{e210}, we show that blow up at a fixed point, of a rotation on $S^6$ with 2 fixed points also has the fixed point data of Case (2) of Theorem \ref{t11}.

We note that Case (1) of Theorem \ref{t11} includes the fixed point data of the exotic $S^1$-action on $\mathbb{CP}^3$ constructed by Petrie \cite{P}. The strong Petrie conjecture asserts that if a compact oriented manifold $M$ is homotopy equivalent to a complex projective space $\mathbb{CP}^n$ and admits a non-trivial $S^1$-action, the total Pontryagin class of $M$ agrees with that of $\mathbb{CP}^n$ \cite{P}. The weak Petrie conjecture adds an assumption that the fixed point set is discrete. If $\dim M=6$, $M$ is a homotopy $\mathbb{CP}^3$, and the fixed point set is discrete, then $M$ has 4 fixed points. For the proof of weak Petrie conjecture in dimension 6, see \cite{D} and \cite{M1}.

The Petrie's example is exotic in a sense that the weights at the 4 fixed points are distinct from those of the linear action. For the exotic action, the (complex) weights at the fixed points are
\begin{center}
$\{7,2,3\}, \{-7,2,3\}, \{5,2,3\}, \{-5,2,3\}$,
\end{center}
respectively. The fixed point data of this action on $\mathbb{CP}^3$ as an oriented manifold is
\begin{center}
$\{+,7,2,3\}, \{-,7,2,3\}, \{+,5,2,3\}, \{-,5,2,3\}$.
\end{center}
Therefore, this fixed point data belongs to Case (1) of Theorem \ref{t11}.

We note that the two types of fixed point data of Theorem \ref{t11} include those of both the exotic action (as Case (1)) and the standard action (as Case (2)) on $\mathbb{CP}^3$, without assumption that a given manifold is homotopy equivalent to $\mathbb{CP}^3$.

The paper is organized as follows. In Section \ref{s2}, we provide background and preliminaries. In Section \ref{s3}, we discuss how to associate a multigraph to an oriented $S^1$-manifold $M$ with a discrete fixed point set. The vertex set of the multigraph is the fixed point set, and roughly speaking, if two fixed points $p$ and $q$ have weights $w$, we draw an edge between $p$ and $q$ with label $w$. The multigraph we assign is slightly different from previous work, but both encode the fixed point data while the former simplifies the proof of Theorem \ref{t11}. In Section \ref{s4}, we show that if $\dim M=6$ and $|M^{S^1}|=4$, there is a few number of possible multigraphs that encode the fixed point data of $M$. In Section \ref{s5}, for each possible multigraph, we determine the fixed point data of $M$, proving Theorem \ref{t11}. In Section \ref{s6}, we compare our result for oriented manifolds with results for different types of manifolds mentioned above. In Section \ref{s7}, we discuss how we convert the fixed point data of $M$ into the empty collection.

\section{Background and preliminaries} \label{s2}

In this section, we review background and properties of a circle action on a compact oriented manifold with a discrete fixed point set.

For an action of a group $G$ on a manifold $M$, denote by $M^G$ the fixed point set. That is,
\begin{center}
$M^G=\{m \in M \, | \, g \cdot m=m, \forall g \in G\}$.
\end{center}

Let the circle group act on a compact oriented manifold $M$. The \textbf{equivariant cohomology} of $M$ is
\begin{center}
$H_{S^1}^*(M):=H^*(M \times_{S^1} S^\infty)$.
\end{center}
The projection map onto the second factor $\pi:M \times_{S^1} S^\infty \to \mathbb{CP}^\infty$ gives rise to a push-forward map
\begin{center}
$\displaystyle \int_M := \pi_*:H_{S^1}^i(M) \to H^{i-\dim M}(\mathbb{CP}^\infty)$ for all $i$.
\end{center}
The Atiyah-Bott-Berline-Vergne localization theorem allows us to compute the push-forward map in terms of the fixed point data.

\begin{theorem} \emph{[\textbf{Atiyah-Bott-Berline-Vergne localization theorem}]} \cite{AB} \label{t21}
Let the circle act on a compact oriented manifold $M$. Given $\alpha \in H_{S^1}^*(M;\mathbb{Q})$,
\begin{center}
$\displaystyle \int_M \alpha=\sum_{F \subset M^{S^1}} \int_F \frac{\alpha|_F}{e_{S^1}(N_F)}$,
\end{center}
where the sum is taken over all fixed components, and $e_{S^1}(N_F)$ is the equivariant Euler class of the normal bundle to $F$.
\end{theorem}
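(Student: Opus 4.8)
The plan is to deduce the residue formula from the \emph{Atiyah--Bott localization isomorphism}, the explicit denominators then coming out of the self-intersection (projection) formula. Write $H^*(BS^1;\mathbb{Q})=H^*(\mathbb{CP}^\infty;\mathbb{Q})=\mathbb{Q}[u]$ with $u$ in degree $2$, regard every equivariant cohomology group as a $\mathbb{Q}[u]$-module, and let $S=\mathbb{Q}[u]\setminus\{0\}$ be the multiplicative set of nonzero elements. The target $H^{*-\dim M}(\mathbb{CP}^\infty)$ of $\int_M$ is torsion-free over $\mathbb{Q}[u]$, so it injects into its localization $S^{-1}H^*(\mathbb{CP}^\infty)$; hence it suffices to establish the identity after inverting $u$, where the fractions on the right-hand side make sense.

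First I would prove that the inclusion $i\colon M^{S^1}\hookrightarrow M$ induces an isomorphism $S^{-1}i^*\colon S^{-1}H_{S^1}^*(M)\to S^{-1}H_{S^1}^*(M^{S^1})$. By the long exact sequence of the pair $(M,M^{S^1})$ in equivariant cohomology and exactness of localization, this reduces to the \textbf{key vanishing lemma}: $S^{-1}H_{S^1}^*(M\setminus M^{S^1})=0$. On the orbit level this is transparent: every orbit in $M\setminus M^{S^1}$ is $S^1/\Gamma$ for a finite cyclic $\Gamma$, and $H_{S^1}^*(S^1/\Gamma;\mathbb{Q})\cong H^*(B\Gamma;\mathbb{Q})$ is rationally concentrated in degree $0$, so the degree-two class $u$ acts as zero there and $S^{-1}H_{S^1}^*(S^1/\Gamma)=0$. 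Covering $M\setminus M^{S^1}$ by invariant tubular neighborhoods of orbits and assembling via Mayer--Vietoris (equivalently, via the Leray spectral sequence of $M\times_{S^1}S^\infty\to\mathbb{CP}^\infty$, whose localized contributions vanish away from the fixed locus) propagates the vanishing to the whole open manifold. This is the step I expect to be the main obstacle: the complement is noncompact and infinitely covered, so one must promote the local vanishing to a global one while controlling the passage to the (co)limit, and filtering $M\setminus M^{S^1}$ by orbit type so that Mayer--Vietoris induction terminates is where the real work lies.

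Granting the isomorphism, I would invert the equivariant Euler class and apply the projection formula. Over each fixed component $F$ the normal bundle $N_F$ carries no trivial $S^1$-summand, so $e_{S^1}(N_F)$ restricts to a product of factors of the form $w\,u+(\text{positive-degree classes of }H^*(F))$ with each weight $w\neq 0$; such an element is invertible in $S^{-1}H_{S^1}^*(F)$. Using the self-intersection formula $i^*i_*\beta=e_{S^1}(N_F)\cdot\beta$ together with the localization isomorphism, I can write $\alpha=i_*\big(i^*\alpha/e_{S^1}(N_{M^{S^1}})\big)$ in $S^{-1}H_{S^1}^*(M)$. Applying $\int_M$, using functoriality of the push-forward in the form $\int_M\circ\, i_*=\int_{M^{S^1}}$, and splitting $M^{S^1}$ into its components $F$, yields $\int_M\alpha=\sum_F\int_F \alpha|_F/e_{S^1}(N_F)$ in the localized ring. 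Since the left-hand side already lives in the torsion-free module $H^{*-\dim M}(\mathbb{CP}^\infty)$, the equality descends to an honest identity, completing the proof.

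As an alternative I would note the Berline--Vergne differential-geometric route through the Cartan model: represent $\alpha$ by an equivariantly closed form, fix an invariant metric, and use the equivariantly closed $1$-form $\theta$ dual to the generating vector field to build an equivariant primitive on $M\setminus M^{S^1}$ whose denominator is invertible precisely where the vector field is nonzero. An equivariant Stokes argument then concentrates $\int_M\alpha$ on shrinking invariant spheres about the fixed points, and evaluating the resulting Gaussian-type integrals reproduces the factor $1/e_{S^1}(N_F)$. The hard point in this version is the same in spirit: controlling the behavior of the primitive near the fixed locus, which mirrors the invertibility of the Euler class in the algebraic argument.
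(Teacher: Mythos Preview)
The paper does not contain a proof of this statement: Theorem~\ref{t21} is quoted from \cite{AB} as background and is used only as a tool (e.g.\ in Lemmas~\ref{l51}, \ref{l56}, \ref{l57}), with no argument supplied. There is therefore nothing in the paper to compare your attempt against.

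For what it is worth, your outline is the standard Atiyah--Bott proof: establish that $S^{-1}H_{S^1}^*(M\setminus M^{S^1})=0$ so that $i^*$ becomes an isomorphism after localization, note that $e_{S^1}(N_F)$ is invertible in $S^{-1}H_{S^1}^*(F)$ because the normal weights are nonzero, and then use the self-intersection formula together with functoriality of push-forward. The alternative Cartan-model/Stokes argument you sketch is the Berline--Vergne route. Both are correct; the only genuine technical point, as you identify, is the global vanishing of the localized equivariant cohomology of the complement, which is handled either by a finite Mayer--Vietoris over an orbit-type stratification or via the Leray spectral sequence of the Borel fibration.
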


For a compact oriented manifold $M$, the L-genus is the genus belonging to the power series $\frac{\sqrt{z}}{\sinh \sqrt{z}}$. The Atiyah-Singer index theorem proves that the L-genus of $M$ is equal to the index of the signature operator on $M$ \cite{AS}. The equivariant index of the signature operator on a compact oriented manifold $M$ equipped with a circle action, is independent of the choice of an element of $S^1$ and is equal to the signature of $M$. As a result, the following formula holds.

\begin{theo} \emph{[\textbf{Atiyah-Singer index theorem}]} \cite{AS} \label{t22} Let the circle act on a $2n$-dimensional compact oriented manifold $M$ with a discrete fixed point set. Then the signature of $M$ is
\begin{center}
$\displaystyle\textrm{sign}(M) = \sum_{p \in M^{S^1}} \epsilon(p) \cdot \prod_{i=1}^{n} \frac{1+t^{w_{pi}}}{1-t^{w_{pi}}}$
\end{center}
for all indeterminates $t$, and is a constant. \end{theo}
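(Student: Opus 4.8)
The plan is to derive the formula from the Atiyah--Bott--Berline--Vergne localization theorem (Theorem~\ref{t21}) applied to a suitable equivariant characteristic class, together with the interpretation of the equivariant signature as a rigid invariant. First I would recall that the signature of $M$ equals the index of the signature operator, which by the $G$-index theorem extends to an equivariant signature $\mathrm{sign}(M,g)$ depending a priori on $g\in S^1$; rigidity (Atiyah--Singer, or Atiyah--Hirzebruch-type arguments for the signature operator) shows this equivariant index is independent of $g$ and hence equals the ordinary signature $\mathrm{sign}(M)$. Writing $g=t$ as an indeterminate, the content of the theorem is the explicit localization expression for this equivariant index.

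Next I would apply the localization formula to the equivariant $L$-class (the stable characteristic class attached to the power series $\frac{\sqrt z}{\sinh\sqrt z}$), whose non-equivariant integral over $M$ gives $\mathrm{sign}(M)$ by Hirzebruch's signature theorem. Since the fixed point set is discrete, each fixed component $F$ in Theorem~\ref{t21} is a single point $p$, the integral $\int_F$ is just evaluation at $p$, and the equivariant Euler class $e_{S^1}(N_p)$ of the normal bundle $T_pM=\bigoplus_{i=1}^n L_i$ is $\prod_{i=1}^n w_{pi}\,x$ (up to the sign $\epsilon(p)$ coming from comparing the orientation of $M$ at $p$ with that of the representation space), where $x$ generates $H^*(\mathbb{CP}^\infty)$. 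The restriction to $p$ of the equivariant $L$-class of the signature operator contributes, for each weight $w_{pi}$, a factor whose evaluation produces $\frac{1+t^{w_{pi}}}{1-t^{w_{pi}}}$ after the standard substitution $t=e^x$ identifying the equivariant parameter; assembling the $n$ factors and the sign gives exactly the summand $\epsilon(p)\prod_{i=1}^n \frac{1+t^{w_{pi}}}{1-t^{w_{pi}}}$. Summing over $p\in M^{S^1}$ yields the stated formula, and the left-hand side is the $g$-independent constant $\mathrm{sign}(M)$.

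The main obstacle, and the step requiring the most care, is justifying that the rational function in $t$ obtained on the right-hand side is actually a constant: individually each summand has poles at roots of unity, and only after summing over all fixed points do these poles cancel. This cancellation is precisely the rigidity of the signature operator, so I would either invoke the rigidity theorem directly or note that the equivariant localization computes the equivariant signature index, which is manifestly the character of a virtual representation evaluated at $t$ and is constant because the signature operator's equivariant index is rigid. A secondary technical point is bookkeeping the orientation sign $\epsilon(p)$ correctly: one must track how the chosen orientations of the $L_i$ (making all weights positive) interact with the orientation of $M$, which is exactly what the factor $\epsilon(p)$ records, and verify the normalization of the local contribution $\prod_i \frac{1+t^{w_{pi}}}{1-t^{w_{pi}}}$ against the standard model of a rotation on $\mathbb{C}^n$. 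Everything else is the routine substitution in the localization formula.
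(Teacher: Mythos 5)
The paper does not prove this statement: it is quoted as a background result from Atiyah--Singer \cite{AS}, with only the one-sentence justification that the equivariant index of the signature operator is independent of the element of $S^1$ and equals $\mathrm{sign}(M)$. Your sketch (localize the equivariant signature at the isolated fixed points to get the factors $\epsilon(p)\prod_i \frac{1+t^{w_{pi}}}{1-t^{w_{pi}}}$, then invoke rigidity of the signature operator for constancy) is the standard derivation and is consistent with, indeed fills in, exactly the route the paper gestures at.
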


Consider a circle action on a compact oriented manifold with a discrete fixed point set. For each positive integer $w$, the number of times the weight $w$ occurs over all the fixed points, counted with multiplicity at each fixed point, is even.

\begin{lemma} \label{l22} \cite{M1}, \cite{J2}
Let the circle act on a $2n$-dimensional compact oriented manifold $M$ with a discrete fixed point set. For any positive integer $w$,
\begin{center}
$|\{w_{pi} : w_{pi}=w, 1 \leq i \leq n, p \in M^{S^1}\}| \equiv 0 \mod 2$.
\end{center}
\end{lemma}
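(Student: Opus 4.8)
The plan is a dimension induction that tracks the multiplicity of a fixed weight $w$ through the fixed‑point sets of the finite cyclic subgroups $\mathbb{Z}_k := \{g \in S^1 : g^k = 1\}$. First I would reduce to effective actions: if the ineffective kernel is $\mathbb{Z}_d$, then $d$ divides every weight, and passing to the effective $S^1/\mathbb{Z}_d$‑action on the same manifold divides every weight by $d$, which does not affect the parity assertion; so assume the action is effective, whence the greatest common divisor of all the weights is $1$ and $\mathbb{Z}_k$ acts nontrivially for each $k \geq 2$. I then induct on $\dim M$, the base cases $\dim M \leq 2$ being immediate (for $\dim M = 2$ the action is a rotation of $S^2$, and the single weight, which is $1$ after effectivization, occurs exactly twice).

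For the inductive step, fix $k \geq 2$ and consider $M^{\mathbb{Z}_k}$, a closed invariant submanifold, proper by effectiveness. It carries the residual action of $S^1/\mathbb{Z}_k \cong S^1$, with $(M^{\mathbb{Z}_k})^{S^1} = M^{S^1}$. At an $S^1$‑fixed point $p$ on a connected component $F$ of $M^{\mathbb{Z}_k}$ one has $T_pF = \bigoplus_{k \mid w_{pi}} L_i$, on which the residual circle acts with the positive weights $w_{pi}/k$; thus $F$ has finite $S^1$‑fixed set, and for every positive integer $v$ the multiplicity of the residual weight $v$ at $p$ equals $\#\{i : w_{pi} = kv\}$. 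Granting that each such $F$ is orientable (see below), the inductive hypothesis applies to every positive‑dimensional $F$ and gives that $\sum_{p \in F \cap M^{S^1}} \#\{i : w_{pi} = kv\}$ is even. Since a fixed point that has a weight equal to $kv$ automatically lies in a positive‑dimensional component (that $L_i$ being tangent to $M^{\mathbb{Z}_k}$), and the components partition $M^{S^1}$, summing over $F$ shows the total multiplicity of the weight $kv$ is even. Letting $k$ range over $\{2,3,4,\dots\}$ and $v$ over positive integers, this proves the claim for every weight $\geq 2$.

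It remains to handle $w = 1$. Here I would use the global count
\[
\sum_{w \geq 1} (\text{total multiplicity of }w) \;=\; n\cdot|M^{S^1}| \;=\; n\cdot\chi(M).
\]
If $n$ is even this is even; if $n$ is odd, then $\dim M \equiv 2 \pmod 4$, so by Poincaré duality the cup‑product pairing on $H^n(M;\mathbb{Q})$ is alternating and nondegenerate, forcing $\dim H^n(M;\mathbb{Q})$, and hence $\chi(M)$, to be even. Either way the left‑hand side is even, and subtracting the already‑established even multiplicities of the weights $\geq 2$ shows the multiplicity of $1$ is even, closing the induction.

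The main obstacle is the orientability of the components $F$ of $M^{\mathbb{Z}_k}$, which legitimizes the inductive step and is where orientability of $M$ is genuinely used. For $k$ odd it is automatic: $\mathbb{Z}_k$ has no nontrivial real $1$‑dimensional representation, so the normal bundle of $M^{\mathbb{Z}_k}$ is a sum of complex line bundles, hence orientable, and $F$ inherits orientability from $M$. For $k$ even, writing $k = 2^a k'$ with $k'$ odd and using the iterated identity $M^{\mathbb{Z}_{2^j}} = (M^{\mathbb{Z}_{2^{j-1}}})^{\mathbb{Z}_2}$ for the residual circle actions, the problem reduces to showing that $M^{\mathbb{Z}_2}$ is orientable for an arbitrary compact oriented $S^1$‑manifold $M$; equivalently, that the normal bundle of $M^{\mathbb{Z}_2}$ — a real $S^1$‑equivariant bundle on which $-1 \in S^1$ acts by $-\mathrm{id}$, so that all its $S^1$‑weights at the fixed points are odd — has trivial first Stiefel–Whitney class. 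I expect establishing this equivariant orientability to be the delicate point: one can exploit the square roots of $-1$ in $S^1$ (noting that components of $M^{\mathbb{Z}_2}$ interchanged by such an element occur in diffeomorphic pairs and hence contribute evenly regardless, reducing to components on which it induces an honest involution), or argue directly with the equivariant first Stiefel–Whitney class in $H^1_{S^1}(F;\mathbb{Z}/2)$. One could also attempt to extract the parities directly from the Atiyah–Singer signature formula of Theorem \ref{t22}, but I find the reduction to lower‑dimensional fixed submanifolds cleaner.
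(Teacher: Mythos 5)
The paper does not actually prove Lemma \ref{l22}: it quotes the statement from \cite{M1} and \cite{J2}, so there is no internal proof to compare against, and your proposal supplies a genuine argument. Its architecture is sound: reducing to effective actions, inducting on dimension by passing to the components $F$ of $M^{\mathbb{Z}_k}$ with their residual $S^1/\mathbb{Z}_k$-actions (whose fixed set is again $M^{S^1}\cap F$ and whose weights at $p$ are the $w_{pi}/k$) disposes of every weight $\geq 2$, and the case $w=1$ follows from the global count $n\cdot|M^{S^1}|=n\cdot\chi(M)$ together with the fact that $\chi(M)\equiv b_n(M)\bmod 2$ is even when $n$ is odd because the intersection form on $H^n(M;\mathbb{Q})$ is then alternating and nondegenerate. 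The intermediate claims check out: a fixed point carrying a weight $kv$ does lie in a positive-dimensional component of $M^{\mathbb{Z}_k}$, $\chi(M)=|M^{S^1}|$ by Poincar\'e--Hopf applied to the generating vector field (each isolated zero has index $+1$), and the residual fixed set is discrete. The one genuine issue is the one you flag yourself, the orientability of the components of $M^{\mathbb{Z}_k}$ for even $k$; you should not attempt to reprove it, since it is precisely Lemma \ref{l25} of the paper (Herrera--Herrera), which gives orientability of every component of $M^{\mathbb{Z}_w}$ containing an $S^1$-fixed point for $w$ even --- and those are the only components your induction ever uses. Citing that lemma closes the argument. One of your two fallback ideas is a dead end, though: components of $M^{\mathbb{Z}_2}$ cannot be interchanged by a square root of $-1$, because $S^1$ is connected and abelian and therefore preserves each component of $M^{\mathbb{Z}_2}$; there is nothing to pair off, and the Herrera--Herrera orientability is genuinely needed there.
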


In particular, if $\dim M=2n$ and there are $k$ fixed points, the total number $nk$ of weights over all fixed points must be even. Thus the following hold.

\begin{cor} \label{c24} Let the circle act on a compact oriented manifold. If the number of fixed points is odd, then the dimension of the manifold is divisible by 4. \end{cor}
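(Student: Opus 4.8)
The plan is to deduce this directly from Lemma \ref{l22} by a parity count. First I would observe that since the number of fixed points is assumed to be odd, it is in particular finite, so the fixed point set is discrete and Lemma \ref{l22} applies. Write $\dim M = 2n$ and let $k = |M^{S^1}|$, which is odd by hypothesis. Each fixed point $p$ contributes exactly $n$ weights $w_{p1},\dots,w_{pn}$, so the total number of weights, counted with multiplicity over all fixed points, equals $nk$.

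Next I would regroup this total by weight value rather than by fixed point: for each positive integer $w$, let $N_w$ denote the number of pairs $(p,i)$ with $w_{pi} = w$. Then $nk = \sum_{w \geq 1} N_w$ (a finite sum, since only finitely many weights occur). By Lemma \ref{l22}, each $N_w$ is even, hence the sum $nk$ is even. Since $k$ is odd, $n$ must be even; writing $n = 2m$ gives $\dim M = 2n = 4m$, which is the claim.

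There is no real obstacle here — the statement is an immediate corollary of Lemma \ref{l22}, and the only point requiring a word of care is the remark that "odd number of fixed points" forces the fixed set to be finite so that Lemma \ref{l22} is applicable. The substantive content has already been carried by Lemma \ref{l22}; this corollary merely packages the parity consequence.
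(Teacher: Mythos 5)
Your proposal is correct and is essentially identical to the paper's argument: the paper also derives the corollary from Lemma \ref{l22} by noting that the total number $nk$ of weights over all fixed points must be even, so $k$ odd forces $n$ even and hence $4 \mid \dim M$.
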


Consider a circle action on an oriented manifold $M$. For a positive integer $w$, the group $\mathbb{Z}_w$ acts on $M$ as a subgroup of $S^1$. H. Herrera and R. Herrera proved the orientability of the set $M^{\mathbb{Z}_w}$ of points in $M$ that are fixed by the $\mathbb{Z}_w$-action.

\begin{lemma} \cite{HH} \label{l25} Let the circle act on a $2n$-dimensional oriented manifold $M$. Consider $\mathbb{Z}_w \subset S^1$ and its corresponding action on $M$. If $w$ is odd then the fixed point set $M^{\mathbb{Z}_w}$ is orientable. If $w$ is even and a connected component $F$ of $M^{\mathbb{Z}_w}$ contains a fixed point of the $S^1$-action, then $F$ is orientable. \end{lemma}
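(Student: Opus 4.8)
The plan is to reduce the orientability of $F$ to that of its normal bundle in $M$, and then to study that normal bundle through the isotypic decomposition of the $\mathbb{Z}_w$-action.

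First I would record that, since $\mathbb{Z}_w$ fixes $F$ pointwise, $TM|_F = TF\oplus N_F$, and as $M$ is oriented $w_1(TF)=w_1(N_F)$; so it suffices to orient $N_F$. The $\mathbb{Z}_w$-action on the fibres of $N_F$ has no nonzero fixed vectors, so $N_F$ splits equivariantly as $\bigoplus_k W_k$, where a generator $g$ of $\mathbb{Z}_w$ acts on $W_k$ with eigenvalues $e^{\pm 2\pi i k/w}$, $1\le k\le w/2$. For $1\le k<w/2$ the operator $\frac{1}{\sin(2\pi k/w)}\bigl(g|_{W_k}-\cos(2\pi k/w)\bigr)$ is a complex structure on $W_k$, so each such $W_k$ is canonically oriented. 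Hence $N_F$ is orientable once the single remaining real subbundle $V$ on which $\mathbb{Z}_w$ acts by $-\mathrm{id}$ — present only when $w$ is even — is shown to be orientable; in particular the case $w$ odd is complete.

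For $w$ even, write $w=2^{a}m$ with $m$ odd. Since $M^{\mathbb{Z}_w}=M^{\mathbb{Z}_m}\cap M^{\mathbb{Z}_{2^{a}}}$ and, $m$ being odd, $M^{\mathbb{Z}_m}$ is orientable by the first part of the lemma, $F$ is a component of $N_1^{\mathbb{Z}_{2^{a}}}$, where $N_1$ is the (oriented) component of $M^{\mathbb{Z}_m}$ containing $F$; $N_1$ carries an $S^1$-action and still contains the given $S^1$-fixed point. Peeling off one factor of $2$ at a time ($\mathbb{Z}_{2^{a}}\supset\mathbb{Z}_{2^{a-1}}\supset\cdots$), each successive fixed component inheriting an $S^1$-fixed point, one reduces to the case $w=2$: $F$ is a component of $N^{\{\pm1\}}$ for an oriented $S^1$-manifold $N$, and $F$ meets $N^{S^1}$. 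Here one must show $w_1(V)=0$ for $V=N(F/N)$, on which $\{\pm1\}$ acts by $-\mathrm{id}$. Two inputs come for free from the circle action: (i) over each component $F_0$ of $F^{S^1}$ the normalised infinitesimal generator on each $S^1$-weight summand of $V|_{F_0}$ is a complex structure, so $w_1(V)$ vanishes on $F^{S^1}$ and $\operatorname{rk}V$ is even (using $F^{S^1}\ne\emptyset$); and (ii) for each circle orbit $\mathcal{O}=S^1/\Gamma$ in $F$ one has $\{\pm1\}\subseteq\Gamma$ and the monodromy $A\in O(V_x)$ of $V$ around $\mathcal{O}$ satisfies $A^{|\Gamma|/2}=-\mathrm{id}$, whence (either $-A$ is orthogonal of odd order, hence in $SO$, or $A$ has no real eigenvalue) $\det A=(-1)^{\operatorname{rk}V}=+1$; thus $w_1(V)$ annihilates every orbit class. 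Since the meridian of a codimension-$2$ component of $F^{S^1}$ is itself such an orbit, a generic loop in $F$ may be pushed off $F^{S^1}$ modulo orbit classes.

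The main obstacle is the final step: upgrading (i) and (ii) to the vanishing of $w_1(V)$ on all of $H_1(F;\mathbb{Z}_2)$, equivalently showing that $H_1(F;\mathbb{Z}_2)$ is generated by $S^1$-orbit classes together with the image of $H_1(F^{S^1};\mathbb{Z}_2)$. This is precisely where the hypothesis that $F$ meets $M^{S^1}$ is unavoidable — it fails for fixed-point-free actions, e.g. a rotation of $T^2$ — so the argument must genuinely play connectedness of $S^1$ against disconnectedness of $\mathbb{Z}_w$. The route I would pursue is to pass to the orbit space $F/S^1$: by (ii) the bundle $V$ is fibrewise trivial along orbits and hence descends to a real line bundle on $F/S^1$, which by (i) is trivial over the image of $F^{S^1}$; it then remains to show this forces triviality, using that $F/S^1$ is suitably connected relative to the image of the nonempty fixed set. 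I expect making this last reduction rigorous to be the heart of the proof.
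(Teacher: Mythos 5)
First, a remark on the comparison itself: the paper offers no proof of Lemma \ref{l25} --- it is quoted from Herrera--Herrera \cite{HH} --- so there is no internal argument to measure yours against; I can only judge the proposal on its own terms. The parts you do carry out are correct. The odd case is the standard argument and is complete: $w_1(TF)=w_1(N_F)$ since $M$ is oriented, and every $\mathbb{Z}_w$-isotypic summand of $N_F$ carries a complex structure built from the generator because the eigenvalue $-1$ cannot occur. The reduction of the even case to $w=2$ by splitting off the odd part of $w$ and peeling off factors of $2$ is sound (each intermediate fixed component is $S^1$-invariant, still contains the given $S^1$-fixed point, and inherits an orientation by induction). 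Your two observations for $w=2$ are also correct: (i) over $F^{S^1}$ the bundle $V$ is complex of even rank, and (ii) the holonomy of $V$ around an orbit $S^1/\Gamma$ is the action of a generator of $\Gamma$, an orthogonal map $A$ with $A^{|\Gamma|/2}=-\mathrm{id}$ on an even-rank space, whence $\det A=+1$ and $w_1(V)$ annihilates every orbit class.

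The proof is nonetheless incomplete exactly where you flag it, and the gap is genuine rather than routine. Worse, the reduction you state as ``equivalent'' --- that $H_1(F;\mathbb{Z}_2)$ is generated by orbit classes together with the image of $H_1(F^{S^1};\mathbb{Z}_2)$ --- is not equivalent to $w_1(V)=0$ (it would only be sufficient), and as a statement about $S^1$-manifolds with nonempty fixed set it is false: an equivariant connected sum, along an orbit with stabilizer exactly $\mathbb{Z}_2$, of a manifold with fixed points and a fixed-point-free piece such as $S^1\times S^3$ with the action $g\cdot(z,w)=(g^4z,g^2w)$ produces an $F$ on which $\mathbb{Z}_2$ acts trivially, $F^{S^1}\neq\emptyset$, yet $H_1(F;\mathbb{Z}_2)$ contains a class that is neither an orbit class nor carried by $F^{S^1}$ (the orbit classes there are even multiples of the generator). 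So no argument using only the pointwise facts (i) and (ii) can close the case $w=2$; one must exploit the global equivariant structure of $V$ --- for instance, condition (ii) lets the orientation double cover $S(\det V)\to F$ descend to a double cover of the orbit space $F/S^1$, and one then needs a comparison of $\pi_1(F)$ with $\pi_1(F/S^1)$ together with the fixed point to show this descended cover is trivial. That step, which is the entire content of the even case (the only case whose hypothesis mentions the $S^1$-fixed point, and the case the paper actually relies on in Lemmas \ref{l35} and \ref{l54}--\ref{l57}), is missing, so the proposal does not yet constitute a proof.
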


To prove Theorem \ref{t11}, we introduce some terminologies. Let $S^1$ act on a $2n$-dimensional compact oriented manifold $M$ with a discrete fixed point set. Let $w$ be a positive integer. Consider a connected component $F$ of $M^{\mathbb{Z}_w}$ such that $F \cap M^{S^1} \neq \emptyset$. Then $F$ is orientable by Lemma \ref{l25}; choose an orientation of $F$. Then the normal bundle $NF$ of $F$ is orientable. Take an orientation on $NF$ so that the orientation of $T_qF \oplus N_qF$ agrees with the orientation of $T_qM$ for all $q \in F$. If $p \in F \cap M^{S^1}$, this gives orientations of $T_pF$ and $N_pF$; let $\alpha_F(p)$ and $\alpha_N(p)$ denote the orientations, respectively. In the decomposition of the tangent space at $p$ into $n$ irreducibles $T_pM=\oplus_{i=1}^n L_i$, for each $i$ we choose an orientation of $L_i$ so that $S^1$ acts on $L_i$ with a positive weight $w_{pi}>0$, $1 \leq i \leq n$. Rearrange $L_i$'s so that $T_pF=L_1 \oplus \cdots \oplus L_m$ and $N_pF=L_{m+1} \oplus \cdots \oplus L_n$.

\begin{definition} \label{d24}
\begin{enumerate}[(1)]
\item $\epsilon_F(p)=+1$ if the orientation on $T_pF$ and the orientation on $L_1 \oplus \cdots \oplus L_m$ (in which all $w_{pi}$ are positive) agree, and $\epsilon_F(p)=-1$ otherwise.
\item $\epsilon_N(p)=+1$ if the orientation on $N_pF$ and the orientation on $L_{m+1} \oplus \cdots \oplus L_n$ (in which all $w_{pi}$ are positive) agree, and $\epsilon_N(p)=-1$ otherwise. 
\item $\epsilon_M(p)=+1$ if the orientation on $T_pM$ and the orientation on $L_{1} \oplus \cdots \oplus L_n$ (in which all $w_{pi}$ are positive) agree, and $\epsilon_M(p)=-1$ otherwise. 
\end{enumerate} 
\end{definition}
By the definitions, $\epsilon(p)=\epsilon_M(p)$ and $\epsilon_M(p)=\epsilon_F(p) \cdot \epsilon_N(p)$. In \cite{J2}, the author proved relationships between the weights at the fixed points that lie in the same connected component of $M^{\mathbb{Z}_w}$.

\begin{lemma} \label{l26} \cite{J2}
Let the circle act on a compact oriented manifold $M$ with a discrete fixed point set. Let $w$ be a positive integer. Suppose that no multiples of $w$ occur as weights, other than $w$ itself. Then for any connected component $F$ of $M^{\mathbb{Z}_w}$, the number of $S^1$-fixed points $p$ in $F$ with $\epsilon_F(p)=+1$ and that with $\epsilon_F(p)=-1$ are equal. Moreover, we can pair points in $F \cap M^{S^1}$ such that
\begin{enumerate}
\item If $(p,q)$ is a pair, then $\epsilon_F(p)=-\epsilon_F(q)$.
\item If $\{w_{p1},\cdots,w_{pm}\}$ and $\{w_{q1},\cdots,w_{qm}\}$ are the weights of $N_pF$ and $N_qF$, there exist a bijection $\sigma_m:\{1,\cdots,m\}\rightarrow\{1,\cdots,m\}$ and a function $\nu:\{1,\cdots,m\} \to \{-1,1\}$ such that $w_{pi} \equiv \nu(i) \cdot w_{q \sigma_m(i)} \mod w$ for any $1 \leq i \leq m$. Moreover, $\epsilon_M(p)=\epsilon_M(q) \cdot (-1)^{\nu_{p,q}^-+1}$, where $\nu_{p,q}^-$ is the number of $i$'s with $\nu(i)=-1$.
\end{enumerate}
\end{lemma}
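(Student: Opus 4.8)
\emph{Proof idea.} The plan is to transfer everything to the residual circle $S^1/\mathbb Z_w$ acting on $F$, together with the $\mathbb Z_w$-equivariant structure of the normal bundle $NF$. We may assume $F\cap M^{S^1}\neq\emptyset$ (otherwise both assertions are empty) and, discarding the degenerate case $\dim F=0$ (where $F$ is a single point), that $\dim F=2d>0$; then $F$ is orientable by Lemma \ref{l25}, and we fix orientations of $F$ and of $NF$ as in the paragraph before Definition \ref{d24}. The hypothesis gives the key simplification: at a point $p\in F\cap M^{S^1}$ a weight $w_{pi}$ is divisible by $w$ exactly when $w_{pi}=w$, so $T_pF=\bigoplus_{w_{pi}=w}L_i$ and $N_pF=\bigoplus_{w\nmid w_{pi}}L_i$, and the residual circle acts on $T_pF$ with all weights equal to $1$. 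Since rescaling positive weights does not affect the induced orientations, $\epsilon_F(p)$ equals the sign of $p$ viewed as a fixed point of the residual $S^1$-action on the oriented manifold $F$.

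For the first assertion, we apply the Atiyah--Singer formula (Theorem \ref{t22}) to $F$ with the residual action. Since every weight is $1$,
\begin{center}
$\displaystyle\mathrm{sign}(F)=\Big(\frac{1+t}{1-t}\Big)^{d}\sum_{p\in F\cap M^{S^1}}\epsilon_F(p),$
\end{center}
and as $d>0$ the prefactor is non-constant in $t$, so that the right-hand side can be constant only if $\sum_{p}\epsilon_F(p)=0$; hence the numbers of $p$ with $\epsilon_F(p)=+1$ and with $\epsilon_F(p)=-1$ agree. We may therefore pair the $\epsilon_F=+1$ points with the $\epsilon_F=-1$ points by an \emph{arbitrary} bijection, which at once yields part (1) of the pairing statement and supplies the factor $-1$ in the last claim of part (2); it remains to verify the rest of part (2) for an arbitrary pair $(p,q)$ inside $F$.

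The congruence is a statement about $NF$ as a $\mathbb Z_w$-bundle. Since $\mathbb Z_w$ fixes $F$ pointwise it acts fibrewise on $NF$, so the $\mathbb Z_w$-isotypic decomposition $NF=\bigoplus_{0<k\le w/2}(NF)_{[k]}$ is defined over the whole connected manifold $F$; hence the $\mathbb Z_w$-representation on the fibre is the same at $p$ and at $q$. Reading this off, with the fibre of $NF$ at $p$ being $\bigoplus_i L_i$ over the normal weights and $L_i$ the planar rotation representation by the angle $2\pi(w_{pi}\bmod w)/w$, produces a bijection $\sigma_m$ and a sign function $\nu$ with $w_{pi}\equiv\nu(i)\,w_{q\sigma_m(i)}\pmod w$ (for a residue equal to $w/2$, which can occur only if $w$ is even, both signs are admissible and I would set $\nu(i)=+1$). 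For the sign relation, since $\epsilon_M=\epsilon_F\epsilon_N$ and $\epsilon_F(p)=-\epsilon_F(q)$ for a pair, it suffices to show $\epsilon_N(p)\epsilon_N(q)=(-1)^{\nu^-_{p,q}}$. Each $(NF)_{[k]}$ with $0<k<w/2$ carries the canonical complex structure in which $\zeta\in\mathbb Z_w$ acts as $e^{2\pi i k/w}$, hence a canonical orientation; comparing the resulting orientation of $A:=\bigoplus_{0<k<w/2}(NF)_{[k]}$ with the positive-weight orientation at $p$ contributes the sign $(-1)^{c(p)}$ with $c(p)=\#\{\,i:w_{pi}\bmod w>w/2\,\}$, and a short case check on $\nu(i)=\pm1$ gives $c(p)+c(q)\equiv\nu^-_{p,q}\pmod2$. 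Because the discrepancy between the fixed orientation of $NF$ and any globally defined orientation of $NF$ is constant on the connected $F$, this gives $\epsilon_N(p)\epsilon_N(q)=(-1)^{\nu^-_{p,q}}$, hence $\epsilon_M(p)\epsilon_M(q)=\epsilon_F(p)\epsilon_F(q)(-1)^{\nu^-_{p,q}}=(-1)^{\nu^-_{p,q}+1}$ — at least when $w$ is odd, in which case $A=NF$.

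The step I expect to be the main obstacle is the case $w$ even. Then $NF$ has, besides $A$, the $(-1)$-eigensubbundle $B:=(NF)_{[w/2]}$, on which $\mathbb Z_w$ acts through its sign character; $B$ is orientable (its first Stiefel--Whitney class is $w_1(NF)+w_1(A)=0$) but has no canonical orientation, so the argument above only gives $\epsilon_N(p)\epsilon_N(q)=(-1)^{\nu^-_{p,q}}\delta(p)\delta(q)$, where $\delta(p)$ measures the positive-weight orientation of $B$ at $p$ against a fixed global orientation of $B$. One must still show that $\delta$ is constant on $F\cap M^{S^1}$, or else refine the pairing so that $\delta(p)=\delta(q)$ for every pair; for this I would analyse the $S^1$-equivariant structure of $B$ over $F$, or try to detect the relevant parity by a further application of Theorem \ref{t22} or Lemma \ref{l22}.
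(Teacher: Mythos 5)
First, a remark on the comparison you were asked to make: the paper does not prove Lemma \ref{l26} at all --- it is imported verbatim from \cite{J2} --- so there is no internal proof to measure your argument against, and your proposal has to be judged on its own. Up to the point you yourself flag, it is sound. The equal-count statement via Theorem \ref{t22} applied to $F$ (all tangent weights along $F$ equal $w$, so the signature is $\bigl(\frac{1+t^w}{1-t^w}\bigr)^{d}\sum_{p}\epsilon_F(p)$ with $d>0$, forcing the sum to vanish) is exactly Lemma \ref{l28} applied to $F$; the congruence in part (2) does follow, for \emph{any} two points of $F\cap M^{S^1}$, from the local constancy of the $\mathbb{Z}_w$-isotypic decomposition of $NF$ over the connected $F$; and your count $c(p)+c(q)\equiv\nu^-_{p,q}\pmod 2$, together with the constancy on connected $F$ of the comparison between the fixed orientation of $NF$ and the canonical complex orientation of $\bigoplus_{0<k<w/2}(NF)_{[k]}$, correctly settles the sign relation when $w$ is odd.

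The gap you flag for $w$ even is genuine, and moreover your proposed default (set $\nu(i)=+1$ on residues equal to $w/2$) is not merely unjustified but false. Take $M=\mathbb{CP}^2$ with $g\cdot[z_0:z_1:z_2]=[z_0:gz_1:g^2z_2]$ and $w=2$; the hypothesis of the lemma holds, and the component $F=\{z_1=0\}$ of $M^{\mathbb{Z}_2}$ contains the fixed points $p=[1:0:0]$ and $q=[0:0:1]$, with $\epsilon_F(p)=-\epsilon_F(q)$, normal weight $1$ (residue $w/2$) at each, and $\epsilon_M(p)=\epsilon_M(q)=+1$. The sign relation $\epsilon_M(p)=\epsilon_M(q)(-1)^{\nu^-+1}$ then forces $\nu(1)=-1$, not $+1$; equivalently, your $B=NF\cong\mathcal{O}(1)$ has odd degree, its fiber weights at $p$ and $q$ are $+1$ and $-1$, and your $\delta$ is \emph{not} constant on $F\cap M^{S^1}$. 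So on the $(-1)$-isotypic subbundle the correct $\nu$ is dictated by the global $S^1$-equivariant structure of $B$ (in effect by the equivariant degrees of its summands over $F$), and detecting it requires strictly more than the non-equivariant signature of $F$: one needs Theorem \ref{t22} as an identity in $t$ (i.e.\ the $G$-signature theorem at general group elements) or the inductive bookkeeping carried out in \cite{J2}. Until that is supplied, your argument proves part (2) only for $w$ odd, whereas the paper applies the lemma in situations where $w$ is even (for instance the subcase $b=2a$ arising in Lemma \ref{l56}).
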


The following lemma can be proved easily using Theorem \ref{t22}.

\begin{lemma} \label{l28} \cite{J2}
Let the circle act on a compact oriented manifold $M$ with a discrete fixed point set. Suppose that every weight at any fixed point is equal to $w$ for some positive integer $w$. Then the number of fixed points $p$ with $\epsilon(p)=+1$ and the number of fixed points $p$ with $\epsilon(p)=-1$ are equal; in particular, there is an even number of fixed points. Moreover, $\textrm{sign}(M)=0$. \end{lemma}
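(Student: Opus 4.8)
The plan is to apply the Atiyah--Singer index theorem (Theorem \ref{t22}) and to use the hypothesis that all weights coincide in order to factor its right-hand side.

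Write $k^+$ for the number of fixed points $p$ with $\epsilon(p)=+1$ and $k^-$ for the number with $\epsilon(p)=-1$, so that $\sum_{p\in M^{S^1}}\epsilon(p)=k^+-k^-$. Since every weight $w_{pi}$ equals $w$, at each fixed point $p$ we have $\prod_{i=1}^n \frac{1+t^{w_{pi}}}{1-t^{w_{pi}}}=\bigl(\frac{1+t^w}{1-t^w}\bigr)^n$, a factor that does not depend on $p$ (note $n\geq 1$, since the hypothesis presupposes that weights exist). Substituting into the formula of Theorem \ref{t22} yields
\[
\textrm{sign}(M)=(k^+-k^-)\left(\frac{1+t^w}{1-t^w}\right)^n
\]
as an identity of rational functions in $t$ (equivalently, for all indeterminates $t$).

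Now I would observe that the left-hand side is a constant while $\bigl(\frac{1+t^w}{1-t^w}\bigr)^n$ is not a constant rational function --- for instance it takes the value $1$ at $t=0$ and has a pole at any $t$ with $t^w=1$. Hence the identity forces $k^+-k^-=0$, i.e.\ $k^+=k^-$; in particular the total number of fixed points $k^++k^-=2k^+$ is even, and $\textrm{sign}(M)=0$. I do not expect any real obstacle here: the only step needing (minimal) care is justifying that $\bigl(\frac{1+t^w}{1-t^w}\bigr)^n$ is genuinely non-constant, after which the finiteness of $\textrm{sign}(M)$ immediately forces the coefficient $k^+-k^-$ to vanish.
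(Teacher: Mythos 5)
Your proof is correct and is exactly the argument the paper has in mind: the paper does not spell out a proof of Lemma \ref{l28}, remarking only that it ``can be proved easily using Theorem \ref{t22}'' (citing \cite{J2}), and your factorization of the signature formula into $(k^+-k^-)\bigl(\frac{1+t^w}{1-t^w}\bigr)^n$ followed by the observation that this rational function is non-constant (e.g.\ it equals $1$ at $t=0$ but has a pole as $t^w\to 1$) is precisely that easy deduction.
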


We review known classification results on the fixed point data of an oriented $S^1$-manifold. If there are 2 fixed points, the fixed point data is the same as that of a rotation on an even dimensional sphere.

\begin{theorem} \label{t28} \cite{Ko2}
Let the circle act on a compact oriented manifold with two fixed points $p$ and $q$. Then the weights at $p$ and $q$ agree up to order and $\epsilon(p)=-\epsilon(q)$. \end{theorem}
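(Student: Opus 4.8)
The plan is to prove this from the Atiyah--Singer signature formula (Theorem \ref{t22}) alone. Put $n=\tfrac12\dim M$; since a connected oriented manifold with exactly two fixed points is positive-dimensional, $n\ge 1$. Write $a_1,\dots,a_n$ for the weights at $p$ and $b_1,\dots,b_n$ for the weights at $q$ (all positive), and set $P(t)=\prod_{i=1}^n\frac{1+t^{a_i}}{1-t^{a_i}}$ and $Q(t)=\prod_{j=1}^n\frac{1+t^{b_j}}{1-t^{b_j}}$. Theorem \ref{t22} then says that
\[
\epsilon(p)\,P(t)+\epsilon(q)\,Q(t)=\textrm{sign}(M)
\]
as an identity of rational functions in $t$.

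The first step is to determine the signs. Evaluating the identity at $t=0$ gives $P(0)=Q(0)=1$, so $\textrm{sign}(M)=\epsilon(p)+\epsilon(q)$. Now let $t\to 1^-$ through real values: each factor $\frac{1+t^{w}}{1-t^{w}}$ is positive and tends to $+\infty$, so $P(t)\to+\infty$ and $Q(t)\to+\infty$; hence if $\epsilon(p)=\epsilon(q)$ the left-hand side would diverge to $\pm\infty$, contradicting that it equals the constant $\textrm{sign}(M)$. Therefore $\epsilon(p)=-\epsilon(q)$ (and then $\textrm{sign}(M)=0$). One can also see this by comparing the order-$n$ poles of the two terms at $t=1$, which forces $\epsilon(p)/\!\prod_i a_i=-\epsilon(q)/\!\prod_j b_j$, giving both $\epsilon(p)=-\epsilon(q)$ and $\prod_i a_i=\prod_j b_j$.

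It remains to show that the weights agree up to order. After relabelling we may assume $\epsilon(p)=+1=-\epsilon(q)$, so the identity reduces to $P(t)=Q(t)$. I would extract the multiset of weights by expanding logarithmically about $t=0$: from $\log\frac{1+t^{w}}{1-t^{w}}=\sum_{k\ \text{odd}}\frac{2}{k}t^{kw}$ one gets that the coefficient of $t^{N}$ in $\log P(t)$ equals $\sum_{k\mid N,\ k\ \text{odd}}\frac{2}{k}A(N/k)$, where $A(d)=\#\{i:a_i=d\}$, and similarly for $B(d)=\#\{j:b_j=d\}$. Equating these coefficients for every $N$ and inducting on $N$ — the term $k=1$ is $2A(N)$, while all other terms involve only the values $A(d)$, $B(d)$ with $d<N$ — yields $A(N)=B(N)$ for all $N$, i.e. $\{a_i\}=\{b_j\}$ as multisets. (Alternatively, using $\frac{1+t^{w}}{1-t^{w}}=\frac{1-t^{2w}}{(1-t^{w})^{2}}$, clear denominators in $P=Q$ and compare the multiplicities of each cyclotomic polynomial $\Phi_d$ on the two sides; the resulting linear relations determine $\#\{i:d\mid a_i\}$, hence the weights.)

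The whole argument is short, so there is no single hard step; the only place that needs genuine care is the last one, passing from the rational-function identity $P=Q$ to the equality of weight multisets. It is worth emphasizing that matching the leading poles at $t=1$ (which only yields $\prod_i a_i=\prod_j b_j$) is nowhere near sufficient, and that controlling zeros of $1+t^{w}$ at even roots of unity is what makes a naive ``compare all pole orders'' approach awkward — the logarithmic expansion (or the cyclotomic bookkeeping) sidesteps this cleanly.
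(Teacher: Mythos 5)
Your proof is correct, but note that the paper does not actually prove Theorem \ref{t28}: it is quoted from Kosniowski \cite{Ko2} without proof, so there is no internal argument to compare against. Your derivation is a clean, self-contained substitute built entirely on Theorem \ref{t22}. The sign step is fine both ways you present it (the $t\to 1^-$ divergence, or matching the order-$n$ poles at $t=1$, which also gives $\prod_i a_i=\prod_j b_j$ for free). The substantive step — passing from the rational-function identity $P=Q$ to equality of weight multisets — is handled correctly by the logarithmic expansion $\log\frac{1+t^{w}}{1-t^{w}}=\sum_{k\ \mathrm{odd}}\frac{2}{k}t^{kw}$ and induction on $N$, since the $k=1$ term isolates $2A(N)-2B(N)$ while all other terms involve strictly smaller arguments. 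This is in the same spirit as, but sharper than, what the paper does elsewhere with Theorem \ref{t22}: the proof of Lemma \ref{l33} expands $\frac{1+t^{w}}{1-t^{w}}=1+2\sum_{j\ge1}t^{jw}$ and only compares the two lowest-order exponents (extracting $a_1=b_1$ and $a_2=b_2$), precisely because for more than two fixed points the cross terms obstruct going further; with exactly two fixed points your log trick eliminates the cross terms entirely and recovers the whole multiset. Your closing remark is also apt: matching pole orders at roots of unity is genuinely awkward here because the numerators $1+t^{w}$ vanish at even-order roots of unity and can cancel poles coming from other factors, which is exactly what the logarithmic (or cyclotomic) bookkeeping avoids.
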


In dimension 4, if a circle action has a discrete fixed point set, the fixed point data is classified.

\begin{theorem} \label{t70} \cite{J2} Let the circle act effectively on a 4-dimensional compact oriented manifold $M$ with a discrete fixed point set. Then the fixed point data of $M$ can be achieved in the following way: begin with the empty collection, and apply a combination of the following steps.
\begin{enumerate}
\item Add $\{+,a,b\}$ and $\{-,a,b\}$, where $a$ and $b$ are relatively prime positive integers.
\item Replace $\{+,c,d\}$ by $\{+,c,c+d\}$ and $\{+,d,c+d\}$.
\item Replace $\{-,e,f\}$ by $\{-,e,e+f\}$ and $\{-,f,e+f\}$.
\end{enumerate}\end{theorem}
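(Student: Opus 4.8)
The plan is to induct on the number $k$ of fixed points. If $k\le 1$ there is nothing to do (when $k=0$ the fixed point data is the empty collection, and $k=1$ cannot occur in dimension $4$), so the base case is $k=2$. Here Theorem~\ref{t28} gives that the weights at the two fixed points $p$ and $q$ agree, say both equal the multiset $\{a,b\}$, and that $\epsilon(p)=-\epsilon(q)$; effectiveness forces $\gcd(a,b)=1$, since otherwise $\mathbb{Z}_{\gcd(a,b)}$ acts trivially on a neighborhood of $p$ and hence, $M$ being connected, on all of $M$. So the fixed point data is obtained from the empty collection by one application of~(1). For $k\ge 3$ let $w$ be the largest weight appearing. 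If $w=1$, every weight is $1$, so Lemma~\ref{l28} yields equally many fixed points of sign $+$ and of sign $-$, say $m$ of each, and the fixed point data consists of $m$ copies of $\{+,1,1\}$ and $m$ copies of $\{-,1,1\}$, obtained from the empty collection by $m$ applications of~(1). From now on assume $w\ge 2$.

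The next step is to examine $M^{\mathbb{Z}_w}$. A connected component $F$ containing an $S^1$-fixed point $p$ is orientable by Lemma~\ref{l25}, and $\dim F=2\cdot\#\{i:w\mid w_{pi}\}$; as $w$ is the largest weight, $w\mid w_{pi}$ forces $w_{pi}=w$, while $w_{p1}=w_{p2}=w$ would give $M^{\mathbb{Z}_w}=M$ and contradict effectiveness since $w\ge 2$. Hence $\dim F\in\{0,2\}$, and since the weight $w$ does occur, at least one such $F$ is $2$-dimensional; being a closed connected oriented surface on which $S^1$ acts with the isolated fixed point $p$, it must be $S^2$ with exactly two fixed points $p,q$; the tangential weight at each of them is $w$, and the remaining (normal) weights $w_p$ and $w_q$ satisfy $0<w_p,w_q<w$. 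No proper multiple of $w$ occurs as a weight, so Lemma~\ref{l26} applies to $F$ and gives $w_p\equiv\pm w_q\pmod w$, hence $w_p=w_q$ (Case~A) or $w_p+w_q=w$ (Case~B), together with the sign relation $\epsilon(p)=-\epsilon(q)$ in Case~A and $\epsilon(p)=\epsilon(q)$ in Case~B; in either case $\gcd(w,w_p)=1$, again by effectiveness.

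The reduction then reads off a reverse move from this local picture. In Case~A the data of $\{p,q\}$ is $\{+,w,w_p\},\{-,w,w_p\}$; the equivariant normal bundle of $F$ has equal fiber weights at its two poles, which (for $w\ge 3$) forces its Euler number to be $0$, so an invariant tubular neighborhood of $F$ is equivariantly standard, and I would replace it by the complement of such a neighborhood of the invariant $2$-sphere in a linear rotation of $S^4$ with fixed-point weights $\{w,w_p\}$; this produces a $4$-dimensional compact connected oriented $S^1$-manifold $M_0$ whose fixed point data is that of $M$ with the pair $\{+,w,w_p\},\{-,w,w_p\}$ removed. In Case~B the data is $\{\epsilon,w,w_p\},\{\epsilon,w,w-w_p\}$; here the equivariant normal bundle of $F$ has fiber weights of absolute values $w_p$ and $w-w_p$ over the base weight $w$, which (for $w\ge 3$) forces its Euler number to be $\pm1$, so $F$ is an equivariant exceptional sphere, and blowing it down equivariantly produces $M_0$ in which $p$ and $q$ are replaced by a single fixed point; a local model shows this point has weights $\{w_p,w-w_p\}$ and sign $\epsilon$. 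In both cases $M_0$ is again a $4$-dimensional compact connected oriented $S^1$-manifold with discrete fixed point set and strictly fewer fixed points, so the inductive hypothesis writes its fixed point data as a combination of~(1)--(3) applied to the empty collection; one further application of~(1) in Case~A, or of~(2) if $\epsilon=+$ (resp.\ (3) if $\epsilon=-$) with $c=w_p$ and $d=w-w_p$, so that $c+d=w$, in Case~B, then recovers the fixed point data of $M$.

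I expect the main obstacle to be the geometric realizability of the reduction, namely verifying that the equivariant surgery (Case~A) and equivariant blow-down (Case~B) can indeed be performed and yield an oriented $S^1$-manifold with exactly the claimed weights and signs, which comes down to tracking the orientations $\epsilon_F$, $\epsilon_N$, $\epsilon_M$ of Definition~\ref{d24} through these operations. The finitely many degenerate small-weight configurations — essentially $w=2$, where $w_p=w-w_p=1$, so that Cases~A and~B merge and the normal Euler number need not vanish — have to be handled separately by an elementary direct argument.
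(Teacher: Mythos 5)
This paper does not actually prove Theorem \ref{t70}; it only quotes it from \cite{J2}, so there is no in-paper argument to compare yours against, and I am judging the proposal on its own terms. Your skeleton is the right one: isolate the largest weight $w$, show each fixed point carries $w$ at most once (else a $4$-dimensional component of $M^{\mathbb{Z}_w}$ contradicts effectiveness), locate the two fixed points $p,q$ of the resulting $2$-sphere component $F$ of $M^{\mathbb{Z}_w}$, and use Lemma \ref{l26} to force either $w_p=w_q$ with $\epsilon(p)=-\epsilon(q)$ (the reverse of operation (1)) or $w_p+w_q=w$ with $\epsilon(p)=\epsilon(q)$ (the reverse of operations (2)/(3)). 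The base case via Theorem \ref{t28}, the all-weights-equal-to-one case via Lemma \ref{l28}, the coprimality observations, and the Euler-number computations for $w\geq 3$ are all correct.

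The genuine gap is that your induction runs over manifolds, so each reduction step must actually produce a compact oriented $S^1$-manifold $M_0$ realizing the reduced data, and you never construct it: the equivariant surgery along $F$ in Case A and the equivariant blow-down in Case B are asserted, explicitly flagged as ``the main obstacle,'' and left unproved. This is not a peripheral verification but the core of the proof. One must show that the invariant tubular neighborhood of $F$ is \emph{equivariantly} standard (the Euler number alone identifies the bundle only non-equivariantly; the signed fiber weights and the equivariant gluing along the boundary lens space must also be matched), track the signs $\epsilon_F$, $\epsilon_N$, $\epsilon_M$ of Definition \ref{d24} through the operation so that the new fixed point in Case B really has sign $\epsilon$ and weights $\{w_p,w-w_p\}$, and check that $M_0$ again satisfies the hypotheses of the theorem --- discrete fixed point set, and an effective action on each component containing fixed points (this last point does follow from the fact that no fixed point of $M$ has both weights divisible by a $d>1$, but it needs to be said, since $M_0$ may be disconnected after the Case A surgery). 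The degenerate case $w=2$, where the Euler-number dichotomy collapses and Cases A and B coincide, is deferred entirely. Until the surgery and blow-down are carried out with these orientations tracked, or replaced by a purely combinatorial induction on the fixed point data using Theorems \ref{t21} and \ref{t22} and Lemma \ref{l26} that never needs the intermediate manifolds, the argument is a plausible outline rather than a proof.
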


In Theorem \ref{t11}, two types of the fixed point data occur for a circle action on a 6-dimensional compact oriented manifold with isolated fixed points. For each type, such a manifold exists.

\begin{exa} \label{e28}
Let the circle act on $S^6$ by
\begin{center}
$g \cdot (z_1,z_2,z_3,x)=(g^a z_1, g^b z_2, g^c z_3,x)$
\end{center}
for all $g \in S^1 \subset \mathbb{C}$, where $S^6=\{(z_1,z_2,z_3,x) \in \mathbb{C}^3 \times \mathbb{R} \ | \ \sum_{i=1}^3 |z_i|^2 + x^2=1\} \subset \mathbb{C}^3 \times \mathbb{R}$ and $a$, $b$, and $c$ are positive integers. The action has two fixed points $p_1=(0,0,0,1)$ and $p_2=(0,0,0,-1)$, and $\Sigma_{p_1}=\{+,a,b,c\}$ and $\Sigma_{p_2}=\{-,a,b,c\}$. Consider another $S^6$ with action
\begin{center}
$g \cdot (z_1,z_2,z_3,x)=(g^d z_1, g^e z_2, g^f z_3,x)$.
\end{center}
Taking an equivariant connected sum (or a disjoint union) of the two 6-spheres along free orbits of them, the fixed point data of Case (1) in Theorem \ref{t11} is achieved.
\end{exa}

A standard linear action on $\mathbb{CP}^3$ has the fixed point data of Case (2) of Theorem \ref{t11}.

\begin{exa} \label{e29}
Let the circle act on $\mathbb{CP}^3$ by
\begin{center}
$g \cdot [z_0:z_1:z_2:z_3]=[z_0:g^a z_1:g^{a+b} z_2:g^{a+b+c} z_3]$,
\end{center}
for all $g \in S^1 \subset \mathbb{C}$, for some positive integers $a$, $b$, and $c$. The action has 4 fixed points $[1:0:0:0]$, $[0:1:0:0]$, $[0:0:1:0]$, and $[0:0:0:1]$. As complex representations where the sign of every weight is well-defined, the weights at the fixed points are $\{a,a+b,a+b+c\}$, $\{-a,b,b+c\}$, $\{-a-b,-b,c\}$, and $\{-a-b-c,-b-c,-c\}$, respectively. As real representations, the fixed point data at each fixed point is $\{+,a,a+b,a+b+c\}$, $\{-,a,b,b+c\}$, $\{+,b,c,a+b\}$, and $\{-,c,b+c,a+b+c\}$, respectively. This fixed point data is the same as Case (2) in Theorem \ref{t11}.
\end{exa}

On the other hand, as explained in the introduction, blow up at a fixed point, of a rotation on $S^6$ with two fixed points also has the fixed point data of Case (2) in Theorem \ref{t11}.

Let $a$, $b$, and $c$ be mutually distinct non-zero integers. Let $S^1$ act on $\mathbb{C}^3$ by
\begin{center}
$g \cdot (z_1,z_2,z_3)=(g^{a} z_1, g^b z_2, g^{c} z_3)$
\end{center}
for all $g \in S^1$. The origin is fixed by this action, and the (complex) weights at the origin are $\{a,b,c\}$. Now we blow up the origin. Blowing up the origin replaces the origin with all complex straight lines through it. The blown up space $\widetilde{\mathbb{C}^3}$ can be described by
\begin{center}
$\widetilde{\mathbb{C}^3}=\{(z,\ell) \, | \, z \in \ell\} \subset \mathbb{C}^3 \times \mathbb{CP}^2$.
\end{center}
The $S^1$-action on $\mathbb{C}^3$ naturally extends to act on $\widetilde{\mathbb{C}^3}$ by
\begin{center}
$g \cdot ((z_1,z_2,z_3),[w_0:w_1:w_2])=((g^{a} z_1, g^b z_2, g^{c} z_3),[g^{a} w_0: g^b w_1:g^{c} w_2])$
\end{center}
for all $((z_1,z_2,z_3),[w_0:w_1:w_2]) \in \widetilde{\mathbb{C}^3}$. The action on $\widetilde{\mathbb{C}^3}$ has three fixed points $p_1=((0,0,0),[1:0:0])$, $p_2=((0,0,0),[0:1:0])$, and $p_3=((0,0,0),[0:0:1])$, that have (complex) weights $\{a,b-a,c-a\}$, $\{b,a-b,c-b\}$, and $\{c,a-c,b-c\}$, respectively.

\begin{exa} \label{e210}
Let $S^1$ act on $S^6$ by $g \cdot (z_1,z_2,z_3,x)=(g^c z_1,g^b z_2,g^{b+a} z_3,x)$ for some positive integers $a$, $b$, and $c$. The fixed point datum at $p_1=(0,0,0,1)$ and $p_2=(0,0,0,-1)$ are $\{+,c,b,a+b\}$ and $\{-,c,b,a+b\}$, respectivedly. Next, identify a neighborhood of $p_2$ with $\mathbb{C}^3$ so that the $S^1$-action near $p_2$ is identified with $g \cdot (z_1,z_2,z_3)=(g^{-c} z_1, g^b z_2, g^{a+b} z_3)$. Under this identification, we blow up the origin (the fixed point $p_2$) in $\mathbb{C}^3$ (a neighborhood of $p_2$) equivariantly to replace the origin by the set of complex straight lines through it; doing it equivariantly yields instead of one fixed point the origin (that has complex $S^1$-weights $\{-c,b,a+b\}$), three fixed points $q_1$, $q_2$, $q_3$, whose weights as complex $S^1$-representations are $\{-c,b+c,a+b+c\}$, $\{-b-c,a,b\}$, and $\{-a-b-c,-a,a+b\}$, respectively. The blown up manifold $\widetilde{S^6}$ is then equipped with a circle action having 4 fixed points $p_1$, $q_1$, $q_2$, and $q_3$ that have the fixed point data as real $S^1$-representations $\{+,b,c,a+b\}$, $\{-,c,b+,c,a+b+c\}$, $\{-,a,b,b+c\}$, and $\{+,a,a+b,a+b+c\}$. This fixed point data also agrees with Case (2) in Theorem \ref{t11}.
\end{exa}

\section{Special multigraph} \label{s3}

It has been already known that for a compact oriented $S^1$-manifold with a discrete fixed point set, we can associate a labeled multigraph, where the vertex set is the fixed point set and edges are drawn in terms of weights \cite{J2}, \cite{M1}. To simplify the proof of our main result, we consider a special type of multigraph by requiring an additional property.

\begin{definition} \label{d31} A \textbf{multigraph} $\Gamma$ is an ordered pair $\Gamma=(V,E)$ where $V$ is a set of vertices and $E$ is a multiset of unordered pairs of vertices, called \textbf{edges}. A multigraph is called \textbf{signed} if every vertex is assigned a number $+1$ or $-1$. A multigraph is called \textbf{labeled}, if every edge $e$ is labeled by a positive integer $w(e)$, called the \textbf{label}, or the \textbf{weight} of the edge. Alternatively, a multigraph is called labeled if there is a map from $E$ to the set of positive integers. Let $\Gamma$ be a labeled multigraph. The \textbf{weights} at a vertex $v$ is a multiset that consists of labels (weights) $w(e)$ for each edge $e$ at $v$. A multigraph $\Gamma$ is called \textbf{$n$-regular}, if every vertex has $n$-edges. \end{definition}

\begin{definition} \label{d32} Let the circle act on a compact oriented manifold $M$ with a discrete fixed point set. We say that a signed labeled multigraph $\Gamma$ \textbf{describes} (the fixed point data) of $M$, if the following hold.
\begin{enumerate}
\item The vertex set of $\Gamma$ is the fixed point set $M^{S^1}$.
\item For every fixed point $p$, the weights (labels) of the edges at the vertex $p$ are the weights at the fixed point $p$. 
\item For every fixed point $p$, the sign of the vertex $p$ is equal to the sign of the fixed point $p$.
\end{enumerate}
\end{definition}

By definition, if a multigraph $\Gamma$ describes $M$, then $\Gamma$ is $n$-regular, where $\dim M=2n$. To associate a special multigraph, we need the following lemma.

\begin{lemma} \label{l33} Let the circle act on a $2n$-dimensional compact oriented manifold $M$ with a non-empty discrete fixed point set. Let 
\begin{center}
$W_+=\{w_{pi} \ | \ p \in M^{S^1}, \epsilon(p)=+1, 1 \leq i \leq n\}$
\end{center}
be the collection (multiset) of all the weights at the fixed points with sign $+1$ and 
\begin{center}
$W_-=\{w_{pi} \ | \ p \in M^{S^1}, \epsilon(p)=-1, 1 \leq i \leq n\}$
\end{center}
the collection of all the weights at the fixed points with sign $-1$. Rearrange elements in $W_+=\{a_1,a_2,\cdots,a_{i_1}\}$ and $W_-=\{b_1,b_2,\cdots,b_{i_2}\}$ so that $a_1 \leq a_2 \leq \cdots \leq a_{i_1}$ and $b_1 \leq b_2 \leq \cdots \leq b_{i_2}$. Then $a_1=b_1$ and $a_2=b_2$. Moreover, $a_i=a_2$ if and only if $b_i=a_2$. \end{lemma}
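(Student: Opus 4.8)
The plan is to read off the required coincidences directly from the Atiyah--Singer signature formula (Theorem \ref{t22}), treating it as an identity of formal power series in $t$.

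Since every weight is a positive integer, $\frac{1+t^w}{1-t^w}=1+2t^w+2t^{2w}+\cdots\in\mathbb{Q}[[t]]$, so for each fixed point $p$ the product $\prod_{i=1}^n\frac{1+t^{w_{pi}}}{1-t^{w_{pi}}}$ lies in $\mathbb{Q}[[t]]$ with constant term $1$. Writing $c_p(k)$ for its coefficient of $t^k$, Theorem \ref{t22} says that $\sum_{p\in M^{S^1}}\epsilon(p)\prod_i\frac{1+t^{w_{pi}}}{1-t^{w_{pi}}}$ is a constant; comparing coefficients of $t^k$ for $k\ge 1$ yields the relations $\sum_{p\in M^{S^1}}\epsilon(p)\,c_p(k)=0$. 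Set $m:=\min(W_+\cup W_-)$ and $n_w(p):=|\{i:w_{pi}=w\}|$. Because every weight is $\ge m$, the only way a factor product can contribute to $t^m$ is by taking the term $2t^m$ from a single factor with $w_{pi}=m$, so $c_p(m)=2\,n_m(p)$, and the $k=m$ relation becomes $\sum_{\epsilon(p)=+1}n_m(p)=\sum_{\epsilon(p)=-1}n_m(p)$. Hence the multiplicity $\mu$ of $m$ in $W_+$ equals its multiplicity in $W_-$; as $m\in W_+\cup W_-$ we get $\mu\ge 1$, so $m$ occurs in both multisets and $a_1=b_1=m$.

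If $\mu\ge 2$ then $a_2=b_2=m$, and, since $m$ has the same multiplicity $\mu$ in $W_+$ and in $W_-$, in the sorted sequences one has $a_i=m\iff i\le\mu\iff b_i=m$, which is the ``moreover'' clause. If $\mu=1$, let $v$ be the second smallest value appearing in $W_+\cup W_-$; it remains to show $v$ lies in both $W_+$ and $W_-$ with equal multiplicity, since then $a_2=b_2=v$ and, by sorting, $a_i=v\iff b_i=v$. For $m<k<2m$ the same counting gives $c_p(k)=2\,n_k(p)$, so the relation forces $\mathrm{mult}_{W_+}(k)=\mathrm{mult}_{W_-}(k)$, settling the case $v<2m$. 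The remaining case $v=2m$ is the crux: here $c_p(2m)=2\,n_{2m}(p)+2\,n_m(p)^2$, the quadratic term arising from taking $2t^{2m}$ out of one factor with $w_{pi}=m$ or $2t^{m}$ out of two distinct such factors; but $\mu=1$ forces $n_m(p)\in\{0,1\}$ with exactly one fixed point of sign $+1$ and one of sign $-1$ attaining the value $1$, so $\sum_{\epsilon(p)=+1}n_m(p)^2=\sum_{\epsilon(p)=-1}n_m(p)^2=1$; the quadratic contributions cancel in the $k=2m$ relation, which again gives $\mathrm{mult}_{W_+}(2m)=\mathrm{mult}_{W_-}(2m)$.

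I expect the main obstacle to be precisely the case $v=2m$: there $c_p(2m)$ is not proportional to a single multiplicity count, and one must observe that in the only regime where $v$ is relevant (namely $\mu=1$) the nonlinear term $\sum_p\epsilon(p)\,n_m(p)^2$ vanishes for purely combinatorial reasons. Everything else is bookkeeping on how small powers of $t$ can be produced from the product expansion, together with the remark that matching the multiplicities of the two smallest values in the sorted multisets $W_+$ and $W_-$ is exactly what the lemma asserts.
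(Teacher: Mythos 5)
Your approach is the same as the paper's: expand the Atiyah--Singer signature formula (Theorem \ref{t22}) as a formal power series in $t$ and compare low-order coefficients, using that the signature is a constant. Your bookkeeping is in fact more explicit than the paper's proof, which says to ``cancel out all the terms $2t^{ja_1}$ and $-2t^{jb_1}$'' and then counts only single-factor contributions to the coefficient of $t^{a_2}$; your observation that $c_p(2m)=2n_{2m}(p)+2n_m(p)^2$ and that $\sum_p\epsilon(p)\,n_m(p)^2$ vanishes when $\mu=1$ (because $n_m(p)\le 1$ with exactly one point of each sign attaining $1$) is precisely the point that makes the terse argument legitimate when $a_2=2a_1$.

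There is, however, one omitted case. In the branch $\mu=1$ you treat $v<2m$ and then call $v=2m$ ``the remaining case,'' but this dichotomy is not exhaustive: $v>2m$ is perfectly possible (e.g.\ $W_+=W_-=\{1,3,5\}$ for a rotation of $S^6$ with weights $1,3,5$). That case is easy by the same counting --- since every weight is either $m$ or at least $v$, and $n_m(p)\le 1$ rules out cross-terms, the only contribution to $c_p(v)$ besides $2n_v(p)$ is $2n_m(p)$ when $m\mid v$ (the $j=v/m$ harmonic of the single weight-$m$ factor), and these contributions cancel between the two signs because $\sum_p\epsilon(p)\,n_m(p)=0$, so again $\mathrm{mult}_{W_+}(v)=\mathrm{mult}_{W_-}(v)$ --- but as written your case analysis has a hole and you should add this case explicitly.
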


\begin{proof}
By Theorem \ref{t22}, the signature of $M$ is
\begin{center}
$\displaystyle \textrm{sign}(M) = \sum_{p \in M^{S^1}} \epsilon(p) \prod_{i=1}^{n} \frac{ 1+t^{w_{pi}}}{1-t^{w_{pi}}} = \sum_{p \in M^{S^1}} \epsilon(p) \prod_{i=1}^{n} [ (1+t^{w_{pi}}) ( \sum_{j=0}^{\infty} t^{j w_{pi}} )]$
$\displaystyle = \sum_{p \in M^{S^1}} \epsilon(p) \prod_{i=1}^{n} (  1+2 \sum_{j=1}^{\infty} t^{j w_{pi}} ).$
\end{center}
The signature of $M$ is independent of the indeterminate $t$ and is a constant. Therefore, comparing the term $2t^{a_1}$ that has the smallest positive exponent and positive coefficient and the term $-2t^{b_1}$ that has the smallest positive exponent and negative coefficient, we must have $a_1=b_1$. Cancel out all the terms $2t^{ja_1}$ and $-2t^{jb_1}$ for any $j>0$ in the equation above. Next, in what is left, we compare the terms whose exponent is equal to $a_2$. Let $i_0=\max\{i \ | \ a_i=a_2\}$. Then the fixed points $p$ with $\epsilon(p)=+1$ contribute the term $2(i_0-1)t^{a_2}$, since each $i$ contributes the term $2t^{a_2}$ for $2 \leq i \leq i_0$. Since this term $2(i_0-1)t^{a_2}$ must be cancelled out, it follows that $b_2=b_3=\cdots=b_{i_0}<b_{i_0+1}$. In particular, $a_2=b_2$. \end{proof}

\begin{rem}
In Lemma \ref{l33}, $a_3 \in W_+$ need not equal $b_3 \in W_-$, since the next smallest exponent in the equation above can be $a_1+a_2$. The simplest example is a linear action on $\mathbb{CP}^2$. Let $S^1$ act on $\mathbb{CP}^2$ by
\begin{center}
$g \cdot [z_0:z_1:z_2]=[z_0:g^a z_1:g^{a+b}z_2]$
\end{center}
for all $g \in S^1 \subset \mathbb{C}$, for some positive integers $a$ and $b$. The action has 3 fixed points $[1:0:0]$, $[0:1:0]$, and $[0:0:1]$, which have the weights $\{a,a+b\}$, $\{-a,b\}$, and $\{-b,-a-b,\}$ as complex $S^1$-representations, and hence fixed point data $\{+,a,a+b\}$, $\{-,a,b\}$, and $\{+,b,a+b\}$, respectively. In the notations as in Lemma \ref{l33}, $W_+=\{a,b,a+b,a+b\}$ and $W_-=\{a,b\}$.
\end{rem}

For a multigraph, by a self-loop we mean an edge whose vertices coincide.

\begin{lemma} \label{l35} Let the circle act on a compact oriented manifold $M$ with a discrete fixed point set. Then there exists a signed labeled multigraph $\Gamma$ describing $M$ that satisfies the following properties:
\begin{enumerate}
\item If the label of an edge is equal to $a_1$ or $a_2$ where $a_1$ and $a_2$ are as in Lemma \ref{l33}, its vertices have different signs. In particular, $\Gamma$ has at least two edges whose vertices have different signs.
\item If the label $w$ of an edge is strictly bigger than $a_2$, the vertices of the edge lie in the same connected component of $M^{\mathbb{Z}_w}$.
\item The multigraph $\Gamma$ has no self-loops.
\end{enumerate}
\end{lemma}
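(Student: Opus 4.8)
The plan is to build $\Gamma$ greedily, grouping the weights at each fixed point into edges while fixing the defects one multiple-class at a time. First I would recall from \cite{J2}, \cite{M1} (as mentioned at the start of Section~\ref{s3}) that \emph{some} signed labeled multigraph describing $M$ exists; the content here is that one can be chosen with the three extra properties. The starting point is the following observation, which is really the engine behind all three properties: if $w$ is a positive integer and $F$ is a connected component of $M^{\mathbb{Z}_w}$ meeting $M^{S^1}$, then within $F$ one can use Lemma~\ref{l26} (applied after quotienting out the weights that are multiples of $w$ but this requires care, see below) or more directly the pairing in Lemma~\ref{l26}(1)--(2): the $S^1$-fixed points in $F$ come in pairs $(p,q)$ with $\epsilon_F(p)=-\epsilon_F(q)$ whose normal weights match modulo $w$, and the tangential weight at each such point is exactly $w$ (when $F$ is $2$-dimensional) or $F$ contributes several weights $\equiv 0$. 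I would draw the edge labeled $w$ between such a paired $p$ and $q$.

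The key steps, in order: (1) Start with the smallest weight $a_1=b_1$. By Lemma~\ref{l33} there are equally many fixed points of each sign carrying a weight equal to $a_1$ (in fact the argument in the proof of Lemma~\ref{l33}, comparing coefficients of $t^{a_1}$ and its multiples in the signature formula, shows the multiset of sign-$(+1)$ occurrences of $a_1$ and sign-$(-1)$ occurrences of $a_1$ have the same size); match these occurrences by an arbitrary bijection and draw edges labeled $a_1$ between oppositely-signed vertices. (2) Do the same for $a_2=b_2$: again Lemma~\ref{l33} (its ``moreover'' clause, $a_i=a_2\iff b_i=a_2$) guarantees the sign-$(+1)$ and sign-$(-1)$ occurrences of $a_2$ are equinumerous, so draw edges labeled $a_2$ between oppositely-signed vertices. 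This gives property~(1); the ``in particular'' is immediate once at least one weight exists, which holds since the fixed set is nonempty and $\dim M>0$ (if $\dim M=0$ the statement is vacuous as there are no edges). (3) For the remaining weights, process the values $w>a_2$ in increasing order of the largest multiple of $w$ that appears, or more simply: for each $w>a_2$, consider $M^{\mathbb{Z}_w}$; I want to pair up the occurrences of $w$ at fixed points so that paired points lie in the same component of $M^{\mathbb{Z}_w}$. Here is where Lemma~\ref{l26} enters, but that lemma assumes no multiples of $w$ other than $w$ occur; to handle the general case I would instead argue component-by-component directly: fix a component $F$ of $M^{\mathbb{Z}_w}$ with $F\cap M^{S^1}\neq\emptyset$, and note that a weight equal to $w$ at a fixed point $p\in F$ is a tangential weight of $F$ at $p$ (since $\mathbb{Z}_w$ acts trivially on that summand), and conversely each tangential summand of $F$ on which $S^1$ acts with weight exactly $w$ contributes such an occurrence. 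So within $F$ the occurrences of the weight $w$ are exactly the multiplicities with which $w$ appears as a tangential $S^1$-weight of $F$. Now apply Lemma~\ref{l22} \emph{to the $S^1$-action on the oriented manifold $F$} (orientable by Lemma~\ref{l25}): the total number of occurrences of the weight $w$ among tangential weights of $F$ is even. Hence inside each such $F$ the occurrences of $w$ can be paired, and drawing those edges inside $F$ gives property~(2). (4) Finally, property~(3): a self-loop at $p$ with label $w$ would use up two occurrences of $w$ at the single point $p$; to avoid it, whenever in steps (1)--(3) a pairing would match two occurrences at the same vertex $p$, re-pair to avoid it. For labels $a_1, a_2$ this is automatic since the two occurrences being matched have opposite signs hence are at different vertices. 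For a label $w>a_2$, if a component $F$ has all its weight-$w$ occurrences concentrated at a single fixed point $p$ with even multiplicity $2m$, a self-loop seems forced; I would rule this out by observing such $p$ would have at least $2m$ of its $n$ weights equal to $w$, and then either invoke effectivity/connectedness or, more robustly, allow the pairing to range over \emph{all} fixed points of $M$ carrying weight $w$ (using Lemma~\ref{l22} globally) and only then refine to respect components — since $M$ is connected, one can reroute. Actually the cleanest fix: when only one vertex $p$ in a component $F$ has weight $w$ (necessarily with multiplicity $\geq 2$), the pairing of Lemma~\ref{l26}-type within $F$ still pairs $p$ with itself; instead I pair these occurrences with weight-$w$ occurrences at other fixed points, accepting that those edges may leave $F$ — but this would break property~(2). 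The honest resolution is that for the application in this paper ($\dim M=6$, $|M^{S^1}|=4$) multiplicities are small and self-loops can be eliminated by inspection; in general I would prove that a component $F$ of $M^{\mathbb{Z}_w}$ containing exactly one $S^1$-fixed point $p$ cannot have $w$ appearing with multiplicity $\geq 2$ in $T_pF$, because $F$ would then be a $\geq 4$-dimensional manifold with a semifree $S^1/\mathbb{Z}_w$-action having a single fixed point, forcing $\dim F=0$, contradiction.

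The main obstacle I anticipate is exactly this interaction between properties (2) and (3): ensuring the pairing can be chosen simultaneously respecting components of $M^{\mathbb{Z}_w}$ \emph{and} avoiding self-loops, when the naive component-wise pairing (from Lemma~\ref{l26} or its direct analogue) might force $p$ to be paired with itself. I expect the resolution to rest on a dimension/structure argument showing that a component of $M^{\mathbb{Z}_w}$ meeting $M^{S^1}$ in a single point cannot carry the weight $w$ with multiplicity exceeding one — equivalently, that such a component is forced to be $2$-dimensional, so that Lemma~\ref{l26} pairs its (even number of) fixed points off-diagonally. Everything else — properties (1) and the bulk of (2) — follows mechanically from Lemma~\ref{l33}, Lemma~\ref{l25}, and Lemma~\ref{l22} as indicated.
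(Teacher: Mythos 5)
Your construction for the labels $a_1,a_2$ and your component-by-component treatment of the labels $w>a_2$ follow the paper's strategy, but there is a genuine gap exactly where you flag it: the self-loop problem for $w>a_2$. Applying Lemma \ref{l22} to the restricted action on a component $F$ of $M^{\mathbb{Z}_w}$ only tells you that the number of occurrences of the weight $w$ among the tangential weights of $F$ is \emph{even}; it does not prevent all of those occurrences from sitting at a single fixed point $p\in F$ (with multiplicity $2$, say) even when $F$ contains several fixed points, in which case any pairing internal to $F$ forces a self-loop. Your two attempted repairs do not close this: ``eliminate self-loops by inspection in the $6$-dimensional application'' is not a proof of the lemma as stated, and your dimension argument only excludes the case where $F\cap M^{S^1}$ is a single point --- it says nothing about a component with two or more fixed points in which the weight $w$ happens to be concentrated at one of them.

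The paper's resolution is to apply Lemma \ref{l33} itself (not Lemma \ref{l22}) to the $S^1$-action on the oriented manifold $F$, with the signs $\epsilon_F$ of Definition \ref{d24}. Every tangential weight of $F$ is a multiple of $w$, and some point of $F$ has tangential weight exactly $w$, so $w$ is the smallest element of $W_{+,F}\cup W_{-,F}$; Lemma \ref{l33} then says the smallest elements of $W_{+,F}$ and $W_{-,F}$ agree \emph{and occur with the same multiplicity} in both. Hence the occurrences of $w$ in $F$ are equidistributed between points with $\epsilon_F=+1$ and points with $\epsilon_F=-1$, and one pairs each $+$-occurrence with a $-$-occurrence. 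Every edge so drawn joins two vertices of opposite $\epsilon_F$-sign, hence two distinct vertices, which kills self-loops and keeps the edge inside $F$ simultaneously --- this is the single observation your plan is missing. (The same mechanism, with $\epsilon$ in place of $\epsilon_F$, is what makes your steps for $a_1$ and $a_2$ loop-free, as you correctly note.) Note also that the paper never invokes Lemma \ref{l26} in this proof, precisely because of the ``no other multiples of $w$'' hypothesis you worried about; the balancing statement of Lemma \ref{l33} applied to $F$ is the engine throughout.
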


\begin{proof}
First, assign a vertex to each fixed point $p \in M^{S^1}$. To every vertex, assign the sign of the corresponding fixed point.

Let $W_+=\{a_1,a_2,\cdots,a_{i_1}\}$ and $W_-=\{b_1,b_2,\cdots,b_{i_2}\}$ be as in Lemma \ref{l33}. Suppose that a fixed point $p_1$ with $\epsilon(p_1)=1$ has weight $a_1$. By Lemma \ref{l33}, $b_1=a_1$, i.e., there exists a fixed point $q_1$ with $\epsilon(q_1)=-1$ that has weight $a_1$. We draw an edge between $p_1$ and $q_1$, giving label $a_1$. Next, let $i_0=\max\{i \, | \, a_i=a_2\}$. By Lemma \ref{l33}, $b_2=b_3=\cdots=b_{i_0}<b_{i_0+1}$. Therefore, for each $j$ such that $2 \leq j \leq i_0$, if a fixed point $p$ with $\epsilon(p)=+1$ has weight $a_j (=a_2)$ and a fixed point $q$ with $\epsilon(q)=-1$ has weight $b_j (=a_j)$, then draw an edge between $p$ and $q$ giving label $a_j$, which is equal to $a_2$. For each fixed point, one weight is used to draw only one edge. This proves Claim (1).

Let $w > a_2$ be an integer. Suppose that a fixed point $p_0$ has weight $w$. Consider a connected component $F$ of $M^{\mathbb{Z}_w}$ that contains $p_0$, which is orientable by Lemma \ref{l25}. Fix an orientation of $F$. The circle action on $M$ restricts to act on $F$, and the fixed point set of this $S^1$-action on $F$ is equal to $F^{S^1}=F \cap M^{S^1}$, which is discrete. If $q \in F^C$, every weight in $T_qF$ is a multiple of $w$. As for the circle action on $M$, for the $S^1$-action on $F$, let 
\begin{center}
$W_{+,F}=\{w_{pi} \ | \ p \in F^{S^1}, \epsilon_F(p)=+1, w_{pi}\textrm{ is a weight in }T_pF\}=\{c_1,c_2,\cdots,c_{i_3}\}$,

$W_{-,F}=\{w_{pi} \ | \ p \in F^{S^1}, \epsilon_F(p)=-1, w_{pi} \textrm{ is a weight in } T_pF\}=\{d_1,d_2,\cdots,d_{i_4}\}$,
\end{center}
be multisets, where $c_1 \leq c_2 \leq \cdots \leq c_{i_3}$ and $d_1 \leq d_2 \leq \cdots \leq d_{i_4}$. Let $i_5=\max\{j \, | \, c_j=c_1\}$. Applying Lemma \ref{l33} to the $S^1$-action on $F$, we have $d_1=\cdots=d_{i_5}<d_{i_5+1}$. Therefore, for each $1 \leq j \leq i_5$, if a fixed point $p \in F^{S^1}$ with $\epsilon_F(p)=+1$ has weight $c_j$ and a fixed point $q \in F^{S^1}$ with $\epsilon_F(q)=-1$ has weight $d_j (=c_j)$, then draw an edge between $p$ and $q$ giving label $c_j$.

Repeat the above for every positive integer $w$ such that $w>a_2$ and for every connected component of $M^{\mathbb{Z}_w}$. This proves Claim (2). The resulting multigraph describes $M$ and the whole procedure does not cause any self-loops. This proves Claim (3). \end{proof}

For a compact oriented $S^1$-manifold with a discrete fixed point set, let $\Gamma_1$ and $\Gamma_2$ be two signed labeled multigraphs describing $M$, where $\Gamma_1$ is arbitrary but $\Gamma_2$ satisfies the properties in Lemma \ref{l35}. Since both of them describe $M$, they encode the same fixed point data; the same sign and the same multiset of the weights at every fixed point $p \in M^{S^1}$. On the other hand, only considering multigraphs that satisfy the properties in Lemma \ref{l35} reduces the proof of Theorem \ref{t11} as we shall we in the next section.

\section{6 dimension and 4 fixed points: possible multigraphs} \label{s4}

In this section, we show what kind of signed labeled multigraph can occur as a multigraph describing a 6-dimensional compact oriented $S^1$-manifold having 4 fixed points.

\begin{figure}
\begin{subfigure}[b][4.5cm][s]{.27\textwidth}
\centering
\vfill
\begin{tikzpicture}[state/.style ={circle, draw}]
\node[state] (a) {$p_1,+$};
\node[state] (b) [right=of a] {$p_2,+$};
\node[state] (c) [above=of a] {$p_3,-$};
\node[state] (d) [right=of c] {$p_4,-$};
\path (a)  edge node[left] {$b$} (c);
\path (a)  [bend left =30]edge node[left] {$a$} (c);
\path (c)  [bend left =30]edge node[right] {$c$} (a);
\path (b)  edge node[left] {$e$} (d);
\path (b)  [bend left =30]edge node[left] {$d$} (d);
\path (d)  [bend left =30]edge node[right] {$f$} (b);
\end{tikzpicture}
\vfill
\caption{Case A}
\label{fig1-1}
\vspace{\baselineskip}
\end{subfigure}\qquad
\begin{subfigure}[b][4.5cm][s]{.27\textwidth}
\centering
\vfill
\begin{tikzpicture}[state/.style ={circle, draw}]
\node[state] (a) {$p_1,+$};
\node[state] (b) [right=of a] {$p_2,+$};
\node[state] (c) [above=of a] {$p_3,-$};
\node[state] (d) [right=of c] {$p_4,-$};
\path (a)  [bend right =20]edge node[left] {$c$} (c);
\path (b)  edge node[below] {$a$} (a);
\path (a)  [bend left =20]edge node[left] {$b$} (c);
\path (d)  [bend left =20]edge node[left] {$e$} (b);
\path (b)  [bend left =20]edge node[left] {$d$} (d);
\path (c)  edge node[above] {$f$} (d);
\end{tikzpicture}
\vfill
\caption{Case B}
\label{fig1-2}
\vspace{\baselineskip}
\end{subfigure}\qquad
\begin{subfigure}[b][4.5cm][s]{.27\textwidth}
\centering
\vfill
\begin{tikzpicture}[state/.style ={circle, draw}]
\node[state] (a) {$p_1,+$};
\node[state] (b) [right=of a] {$p_2,+$};
\node[state] (c) [above=of a] {$p_3,-$};
\node[state] (d) [right=of c] {$p_4,-$};
\path (a)  edge node[left] {$b$} (c);
\path (b)  edge node[pos=.2, right, sloped, rotate=90] {$d$} (c);
\path (c)  [bend right =30]edge node[left] {$a$} (a);
\path (d)  edge node[right] {$e$} (b);
\path (b)  [bend right =30]edge node[right] {$f$} (d);
\path (a)  edge node[pos=.2, right, sloped, rotate=270] {$c$} (d);
\end{tikzpicture}
\vfill
\caption{Case C}
\label{fig1-3}
\vspace{\baselineskip}
\end{subfigure}\qquad
\begin{subfigure}[b][4.5cm][s]{.27\textwidth}
\centering
\vfill
\begin{tikzpicture}[state/.style ={circle, draw}]
\node[state] (a) {$p_1,+$};
\node[state] (b) [right=of a] {$p_2,+$};
\node[state] (c) [above=of a] {$p_3,-$};
\node[state] (d) [right=of c] {$p_4,-$};
\path (a)  [bend left =10]edge node[above] {$b$} (b);
\path (a)  edge node [left] {$c$} (c);
\path (b)  [bend left =10] edge node [below] {$a$} (a);
\path (b)  edge node [right] {$d$} (d);
\path (d)  [bend left =10]edge node [below] {$e$} (c);
\path (c)  [bend left =10]edge node [above] {$f$} (d);
\end{tikzpicture}
\vfill
\caption{Case D}
\label{fig1-4}
\vspace{\baselineskip}
\end{subfigure}\qquad
\begin{subfigure}[b][4.5cm][s]{.27\textwidth}
\centering
\vfill
\begin{tikzpicture}[state/.style ={circle, draw}]
\node[state] (a) {$p_1,+$};
\node[state] (b) [right=of a] {$p_2,+$};
\node[state] (c) [above=of a] {$p_3,-$};
\node[state] (d) [right=of c] {$p_4,-$};
\path (b)  edge node[below] {$a$} (a);
\path (a)  edge node[pos=.2, left, sloped, rotate=270] {$c$} (d);
\path (a) edge node [left] {$b$} (c);
\path (d)  edge node [right] {$e$} (b);
\path (b) edge node[pos=.2, right, sloped, rotate=90] {$d$} (c);
\path (c) edge node [above] {$f$} (d);
\end{tikzpicture}
\vfill
\caption{Case E}
\label{fig1-5}
\vspace{\baselineskip}
\end{subfigure}\qquad
\caption{Possible (special) multigraphs}\label{fig1}
\end{figure}
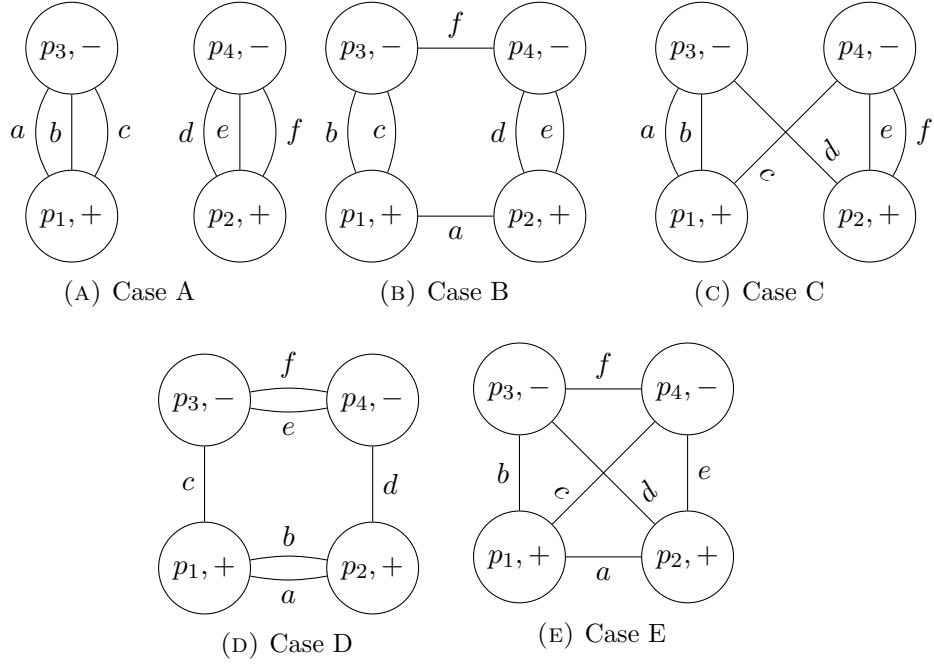

\begin{lemma} \label{l41} Let the circle act on a 6-dimensional compact oriented manifold $M$ with 4 fixed points. Then one of the figures in Figure \ref{fig1} occurs as a signed labeled multigraph describing $M$ that satisfies the properties in Lemma \ref{l35}. In particular, one of the following occurs as the fixed point data of $M$ for some positive integers $a$, $b$, $c$, $d$, $e$, and $f$.
\begin{enumerate}[(A)]
\item $\{+,a,b,c\}, \{+,d,e,f\}, \{-,a,b,c\}, \{-,d,e,f\}$ (Figure \ref{fig1-1}).
\item $\{+,a,b,c\}, \{+,a,d,e\}, \{-,b,c,f\}, \{-,d,e,f\}$ (Figure \ref{fig1-2}).
\item $\{+,a,b,c\}, \{+,d,e,f\}, \{-,a,b,d\}, \{-,c,e,f\}$ (Figure \ref{fig1-3}).
\item $\{+,a,b,c\}, \{+,a,b,d\}, \{-,c,e,f\}, \{-,d,e,f\}$ (Figure \ref{fig1-4}).
\item $\{+,a,b,c\}, \{+,a,d,e\}, \{-,b,d,f\}, \{-,c,e,f\}$ (Figure \ref{fig1-5}).
\end{enumerate}
\end{lemma}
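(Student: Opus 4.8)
The plan is to build the multigraph of Lemma \ref{l35} one edge at a time, using the sign-balance result Lemma \ref{l28} (and the fact that an oriented manifold with all weights equal has signature $0$) to forbid certain edge configurations, and Lemma \ref{l22} (parity of weight multiplicities) together with Lemma \ref{l26} to control where the ``large'' edges can go. Since $\dim M = 6$, the multigraph $\Gamma$ is $3$-regular on $4$ vertices, hence has exactly $6$ edges. By Lemma \ref{l33} and property (1) of Lemma \ref{l35}, the two smallest weights $a_1$ and $a_2$ each appear as a label on an edge joining a $+$ vertex to a $-$ vertex, so $\Gamma$ has at least two ``mixed'' edges. A $3$-regular graph on $4$ vertices with $\geq 2$ mixed edges: since the signs must balance (two $+$ vertices $p_1,p_2$ and two $-$ vertices $p_3,p_4$ — this balance itself needs a short argument, but it follows because an odd split would force $3$ vs $1$, and counting edge-endpoints of each type plus Lemma \ref{l22} rules that out), I would enumerate the $3$-regular bipartite-ish multigraphs on the partition $\{p_1,p_2\}\sqcup\{p_3,p_4\}$.

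First I would establish the sign split is $(2,2)$. If it were $(4,0)$ or $(1,3)$: the $(4,0)$ case contradicts Lemma \ref{l33} (there would be no $-$ vertex to carry $b_1=a_1$), and for $(3,1)$ one counts that the lone $-$ vertex has only $3$ weight-slots while Lemma \ref{l33} plus property (1) of Lemma \ref{l35} demand it be matched to both $a_1$-edges and all $a_2$-edges emanating from the three $+$ vertices, which quickly overflows $3$ unless $i_0$ is small, and the remaining combinatorics of the $+$--$+$ edges then violates Lemma \ref{l22}. Next, with the $(2,2)$ split fixed, let $m$ be the number of $+$--$+$ edges; by symmetry this equals the number of $-$--$-$ edges, and then the number of mixed edges is $6-2m$, so $m \in \{0,1,2\}$. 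The case $m=3$ (no mixed edges at all) is excluded by property (1) of Lemma \ref{l35}. For each value of $m$ I would list the multigraphs up to the obvious symmetries: $m=0$ gives a bipartite $3$-regular multigraph between $\{p_1,p_2\}$ and $\{p_3,p_4\}$, and up to symmetry the possibilities are the ``double-plus-single'' distributions of the $6$ edges, which after symmetry reduce to Cases C and E (depending on whether the edge-multiplicities between the four pairs are $(3,0,0,3)$, $(2,1,1,2)$, etc.); $m=1$ forces one $+$--$+$ edge, one $-$--$-$ edge, and $4$ mixed edges distributed $2$--$2$ or $3$--$1$ among the cross pairs, yielding Cases B and D; $m=2$ forces two $+$--$+$ edges (a double edge between $p_1,p_2$) and two $-$--$-$ edges and exactly $2$ mixed edges — the minimum allowed by property (1) — giving Case A. Reading off the weight multisets at each vertex from each graph produces exactly the five fixed-point-data lists (A)--(E) in the statement.

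The main obstacle will be the enumeration bookkeeping: showing that every $3$-regular signed multigraph on four vertices with a $(2,2)$ sign split and at least two mixed edges is, after relabeling vertices within each sign-class and possibly swapping the two sign-classes, isomorphic to one of the five pictured graphs — and that no further constraint (e.g.\ a self-loop, forbidden by property (3) of Lemma \ref{l35}) sneaks in. I would organize this by the multiset of edge-multiplicities on the six vertex-pairs, eliminate self-loops immediately, and then run through the at most a handful of partitions of $6$ into contributions consistent with $3$-regularity. A secondary subtlety is that at this stage we do not yet know the labels satisfy the arithmetic relations of Theorem \ref{t11}; Lemma \ref{l41} only asserts the \emph{shape} of the weight multisets, so I do not need Lemmas \ref{l22}, \ref{l26} to pin down labels here — they are only needed to kill the bad sign splits and to justify that every weight $>a_2$ lies on an edge within a $\mathbb{Z}_w$-component, which is already baked into property (2) of Lemma \ref{l35} and used only implicitly. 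The genuinely new content of this lemma over Lemma \ref{l35} is thus purely the finite graph classification, and that is where the care goes.
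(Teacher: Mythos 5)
Your overall strategy --- enumerate the $3$-regular signed labeled multigraphs on four vertices with no self-loops and at least two mixed edges --- is essentially the paper's, but two steps need repair. First, the $(2,2)$ sign split. The paper gets this in one line: since $\dim M=6$ is not divisible by $4$, $\textrm{sign}(M)=0$, and setting $t=0$ in Theorem \ref{t22} gives $0=\sum_{p\in M^{S^1}}\epsilon(p)$, which with four fixed points forces two $+$'s and two $-$'s. Your proposed substitute --- ruling out a $(3,1)$ split by arguing that the lone $-$ vertex's three weight-slots ``overflow'' under Lemma \ref{l33}, with Lemma \ref{l22} finishing the job --- does not close: with a $(3,1)$ split the total number of weights is still $12$ (even), so Lemma \ref{l22} gives no immediate contradiction, and Lemma \ref{l33} only pins down $a_1=b_1$, $a_2=b_2$ and the multiplicity of $a_2$, which is perfectly compatible with a single $-$ vertex carrying three weights. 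The missing idea is precisely the vanishing of the signature in dimension $6$; without it your elimination of the odd splits is a gap.

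Second, your enumeration by the number $m$ of same-sign edges is a sound organizing principle (and your endpoint-counting argument that the number of $+$--$+$ edges equals the number of $-$--$-$ edges is correct), but the case assignments you state are wrong in every instance: $m=0$ (all six edges mixed) yields the cross-multiplicity patterns $(3,0,0,3)$ and $(2,1,1,2)$, i.e.\ Figures \ref{fig1-1} and \ref{fig1-3}, not C and E; for $m=1$ each vertex has only two free slots left, so the ``$3$--$1$'' distribution you invoke is impossible, and the two cases $(2,0,0,2)$ and $(1,1,1,1)$ are Figures \ref{fig1-2} and \ref{fig1-5}, not B and D; and $m=2$ yields Figure \ref{fig1-4}, not A. All five figures are recovered by a correct run of your scheme, but as written the bookkeeping has not actually been carried out. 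For comparison, the paper enumerates instead by the triple $(m_2,m_3,m_4)$ of edge multiplicities at the single vertex $p_1$, normalized so that $m_3\ge 1$ (using Property (1) of Lemma \ref{l35}) and $m_3\ge m_4$; this makes the completion of each multigraph forced by $3$-regularity and the absence of self-loops, and avoids the double count.
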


\begin{proof}
Since $\dim M \neq 0 \mod 4$, the L-genus of $M$ is equal to 0, which is equal to the signature of $M$. Taking $t=0$ in Theorem \ref{t22}, 
\begin{center}
$\displaystyle 0=\textrm{sign}(M)=\sum_{p \in M^{S^1}} \epsilon(p)$.
\end{center}
This implies that there are two fixed points $p_1$ and $p_2$ with sign $+1$ and two fixed points $p_3$ and $p_4$ with sign $-1$. By Lemma \ref{l35}, there exists a signed labeled multigraph $\Gamma$ describing $M$ that satisfies the following properties:
\begin{enumerate}
\item If the label of an edge is equal to $a_1$ or $a_2$ where $a_1$ and $a_2$ are as in Lemma \ref{l33}, its vertices have different signs. In particular, $\Gamma$ has at least two edges whose vertices have different signs.
\item If the label $w$ of an edge is strictly bigger than $a_2$, the vertices of the edge lie in the same connected component of $M^{\mathbb{Z}_w}$.
\item The multigraph $\Gamma$ has no self-loops.
\end{enumerate}

For $i \in \{2,3,4\}$, let $m_i$ be the number of edges between $p_1$ and $p_i$. By permuting $p_1$ and $p_2$, and by permuting $p_3$ and $p_4$ if necessary, we may assume that $m_3 \geq 1$ (by Property (1) above) and $m_3 \geq m_4$ (by permuting $p_3$ and $p_4$ if necessary). Since $m_i \geq 0$ for any $i$ and $m_2+m_3+m_4=3$, we have the following cases for $(m_2,m_3,m_4)$; $(0,3,0)$, $(1,2,0)$, $(0,2,1)$, $(2,1,0)$, and $(1,1,1)$ (in the order that $m_3$ is non-increasing). Each case in order corresponds to a multigraph in Figure \ref{fig1}, when we complete the multigraph that satisfies the properties in Lemma \ref{l35}. \end{proof}

\section{Proof of the main theorem} \label{s5}

In this section, we prove our main result, Theorem \ref{t11}, by determining the fixed point data for each multigraph in Figure \ref{fig1}. Since the fixed point data of Figure \ref{fig1-1} is precisely Case (1) of Theorem \ref{t11}, there is nothing to prove. Figures \ref{fig1-2}, \ref{fig1-3}, and \ref{fig1-4} are easy to deal with; the fixed point data in any of these figures falls into Case (1) of Theorem \ref{t11}. Figure \ref{fig1-5} (Case E) is the most difficult case and requires complicated arguments. The fixed point data from Figure \ref{fig1-5} either belongs to Case (1) or is precisely Case (2) of Theorem \ref{t11}, depending on the relationship between the weights at the fixed points. For Figure \ref{fig1-5}, when the fixed point data is the same as in Case (2) of Theorem \ref{t11}, the multigraph \ref{fig1-5} describes a standard linear action on $\mathbb{CP}^3$.

\begin{lemma} \label{l51} Let the circle act on a 6-dimensional compact oriented manifold $M$ with 4 fixed points. Suppose that Figure \ref{fig1-2} describes $M$ and satisfies the properties in Lemma \ref{l35}. Then $a=f$; the fixed point data of $M$ belongs to Case (1) of Theorem \ref{t11}.  \end{lemma}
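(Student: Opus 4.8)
The plan is to extract as much as possible from the two global constraints we have: the signature formula (Theorem \ref{t22}) and the defining properties of the special multigraph (Lemma \ref{l35}), and to supplement these with Lemma \ref{l26} applied to the edge of largest weight. In Figure \ref{fig1-2} the fixed point data is $\Sigma_{p_1}=\{+,a,b,c\}$, $\Sigma_{p_2}=\{+,a,d,e\}$, $\Sigma_{p_3}=\{-,b,c,f\}$, $\Sigma_{p_4}=\{-,d,e,f\}$, with the edges $p_1p_2$ (label $a$), $p_1p_3$ (labels $b,c$), $p_2p_4$ (labels $d,e$), and $p_3p_4$ (label $f$). First I would record what Lemma \ref{l35}(1) forces: the two smallest weights $a_1,a_2$ over all fixed points lie on edges joining a $+$ vertex to a $-$ vertex; the only such edges are $p_1p_3$ and $p_2p_4$, so $\{a_1,a_2\}\subseteq\{b,c,d,e\}$. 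In particular $a$ is \emph{not} among the two smallest weights (unless it ties $a_2$, which we can treat separately), and likewise $f>a_2$; hence by Lemma \ref{l35}(2) the vertices $p_3,p_4$ of the edge labelled $f$ lie in the same component of $M^{\mathbb{Z}_f}$, and similarly $p_1,p_2$ lie in the same component of $M^{\mathbb{Z}_a}$.

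Next I would run the signature computation. Write $\mathrm{sign}(M)=\sum_p\epsilon(p)\prod_{i=1}^3\frac{1+t^{w_{pi}}}{1-t^{w_{pi}}}$, which is a constant in $t$; equivalently, using the expansion in the proof of Lemma \ref{l33}, $0=\sum_p\epsilon(p)\prod_i(1+2\sum_{j\ge1}t^{jw_{pi}})$ after subtracting the constant $4\cdot0$... more precisely $\sum\epsilon(p)=0$ already, and the coefficient of every positive power of $t$ must vanish. I expect that matching low-order terms and the structure of the four triples $\{a,b,c\},\{a,d,e\},\{b,c,f\},\{d,e,f\}$ — which differ by the substitutions $\{a\}\leftrightarrow\{f\}$ on the two sides — will force $a=f$ fairly directly: the $+$ contributions are governed by $a$ sitting with $\{b,c\}$ and with $\{d,e\}$, the $-$ contributions by $f$ sitting with the same two pairs, so the generating functions agree identically precisely when $\frac{1+t^a}{1-t^a}=\frac{1+t^f}{1-t^f}$, i.e. $a=f$. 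I would make this rigorous either by a clean algebraic identity or, as a fallback, by comparing the smallest exponent at which the $+$ and $-$ sides could disagree.

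If the signature identity does not cleanly close the gap on its own (for instance, if some of $a,b,c,d,e,f$ coincide and create cancellations), the backup is Lemma \ref{l26}: apply it to $w=f$, assuming first that no proper multiple of $f$ occurs as a weight (the generic case; otherwise pass to the largest such weight and induct). Then $p_3$ and $p_4$ are paired inside their common component of $M^{\mathbb{Z}_f}$, with a bijection between the normal weights $\{d,e\}$ at $p_4$ and $\{b,c\}$ at $p_3$ that is a sign-respecting congruence mod $f$, plus the sign relation $\epsilon_M(p_3)=\epsilon_M(p_4)\cdot(-1)^{\nu^-+1}$; symmetrically apply it to $w=a$ on $\{p_1,p_2\}$. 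Combining these congruences with the low-degree information from the signature and the minimality statement of Lemma \ref{l35}(1) should pin down $a=f$ and then exhibit the data as Case (1): indeed once $a=f$, the four triples become $\{a,b,c\},\{a,d,e\},\{b,c,a\},\{d,e,a\}$, i.e. $\{+,a,b,c\},\{-,a,b,c\},\{+,a,d,e\},\{-,a,d,e\}$, which is exactly Case (1) of Theorem \ref{t11} with $(d,e,f)$ there equal to $(a,d,e)$ here (after relabelling). The main obstacle I anticipate is the bookkeeping when weights coincide and the ``no multiples'' hypothesis of Lemma \ref{l26} fails; handling that by choosing the largest weight with the required property and arguing by downward induction on weights is the step that will need the most care.
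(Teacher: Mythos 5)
Your primary route is correct and closes the lemma, but it is not quite the route the paper takes here. The paper applies the ABBV localization formula (Theorem \ref{t21}) to the class $1$, giving the single rational identity
\[
0=\int_M 1=\frac{1}{abc}+\frac{1}{ade}-\frac{1}{bcf}-\frac{1}{def}=\Bigl(\frac{1}{bc}+\frac{1}{de}\Bigr)\Bigl(\frac{1}{a}-\frac{1}{f}\Bigr),
\]
whence $a=f$ since the first factor is positive. You instead use the vanishing of the signature; your observation that the four terms factor as
\[
\Bigl(\tfrac{1+t^a}{1-t^a}-\tfrac{1+t^f}{1-t^f}\Bigr)\Bigl(\tfrac{(1+t^b)(1+t^c)}{(1-t^b)(1-t^c)}+\tfrac{(1+t^d)(1+t^e)}{(1-t^d)(1-t^e)}\Bigr)=0
\]
is exactly the same combinatorial factorization, and it does force $a=f$ once you note the second factor is a nonvanishing power series (it is positive for $0<t<1$). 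This is precisely the technique the paper uses for Lemmas \ref{l52} and \ref{l53}, so your argument is entirely in its spirit; the localization version is marginally cleaner because it is an identity of rational numbers rather than of formal power series. You should commit to the factorization rather than hedging with ``matching low-order terms'': no case analysis on coinciding weights is needed, and the entire fallback apparatus (Lemma \ref{l26} applied to $w=a$ and $w=f$, the downward induction on weights) is unnecessary --- the one identity already does the job unconditionally. One small correction to your setup: $a$ cannot ``tie'' $a_2$, since Property (1) of Lemma \ref{l35} forbids the value $a_2$ from labelling the same-sign edge $p_1p_2$ at all, so $a>a_2$ outright; but again, none of that is needed for the proof.
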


\begin{proof} The fixed point data of $M$ is
\begin{center}
$\{+,a,b,c\}$, $\{+,a,d,e\}$, $\{-,b,c,f\}$, $\{-,d,e,f\}$.
\end{center}
For a dimensional reason, the push-forward of the equivariant cohomology class 1 in equivariant cohomology vanishes; $\int_M 1=0$ ($\int_M:H_{S^1}^i(M) \to H^{i- \dim M}(\mathbb{CP}^\infty)$ for all $i$). Therefore, by Theorem \ref{t21},
\begin{center}
$\displaystyle 0=\int_M 1=\sum_{p \in M^{S^1}} \epsilon(p) \frac{1}{\prod_{i=1}^n w_{pi}}=\frac{1}{abc}+\frac{1}{ade}-\frac{1}{bcf}-\frac{1}{def}=\left(\frac{1}{bc}+\frac{1}{de}\right)\left(\frac{1}{a}-\frac{1}{f}\right).$
\end{center}
Thus $a=f$ and hence the fixed point data of $M$ belongs to Case (1) of Theorem \ref{t11}. \end{proof}

\begin{lemma} \label{l52} Let the circle act on a 6-dimensional compact oriented manifold $M$ with 4 fixed points. Suppose that Figure \ref{fig1-3} describes $M$ and satisfies the properties in Lemma \ref{l35}. Then $c=d$ or $\{a,b\}=\{e,f\}$; the fixed point data of $M$ belongs to Case (1) of Theorem \ref{t11}. 
\end{lemma}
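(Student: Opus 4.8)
The plan is to mimic the proof of Lemma \ref{l51}: the fixed point data from Figure \ref{fig1-3} is
$\{+,a,b,c\}$, $\{+,d,e,f\}$, $\{-,a,b,d\}$, $\{-,c,e,f\}$,
and since $\dim M = 6$ is not divisible by $4$ the push-forward $\int_M 1$ vanishes for dimensional reasons. Applying the localization theorem (Theorem \ref{t21}) to the class $1$, I would write
\[
0 = \int_M 1 = \frac{1}{abc} + \frac{1}{def} - \frac{1}{abd} - \frac{1}{cef}.
\]
The first step is to factor the right-hand side. Grouping the first and third terms over $abcd$ and the second and fourth over $cdef$ gives
\[
0 = \frac{d-c}{abcd} + \frac{c-d}{cdef} = (d-c)\left(\frac{1}{abcd} - \frac{1}{cdef}\right) = \frac{d-c}{cd}\left(\frac{1}{ab} - \frac{1}{ef}\right).
\]
Hence either $d = c$, or $ab = ef$; in the latter case I would then argue $\{a,b\} = \{e,f\}$, which requires a small additional input since $ab = ef$ alone does not force the multisets to coincide.

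For the second step — promoting $ab = ef$ to $\{a,b\} = \{e,f\}$ — I expect to need a second localization identity or a congruence/divisibility argument coming from the multigraph structure and Lemma \ref{l35}. One natural candidate is to push forward an equivariant cohomology class of positive degree, e.g. a degree-$2$ generator, which yields a linear relation among $\frac{a+b+c}{abc}$-type expressions; combined with $ab=ef$ this should pin down the individual weights. Alternatively, the edge between $p_1$ and $p_2$ with label $a$ (shared by the two $+$ vertices) and the edge labelled $f$ (shared by the two $-$ vertices) together with Property (2) of Lemma \ref{l35} constrain which weights at $p_1$ and $p_2$ (resp. $p_3$ and $p_4$) must be congruent modulo the large labels; I would use this to show that the unordered pair $\{a,b\}$ at the two $+$ points matches $\{e,f\}$. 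The cleanest route is probably to observe that once $ab = ef$ with $a,b,e,f$ positive integers, and the configuration is realized by an actual manifold, Lemma \ref{l26} applied to the component of $M^{\mathbb{Z}_w}$ for an appropriate $w$ forces the congruence $\{a,b\} \equiv \{e,f\}$, and boundedness then gives equality.

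Once we have $c = d$ or $\{a,b\} = \{e,f\}$, I would check that in either case the fixed point data rearranges into Case (1) of Theorem \ref{t11}. If $c = d$, the data becomes $\{+,a,b,c\}$, $\{+,c,e,f\}$, $\{-,a,b,c\}$, $\{-,c,e,f\}$, which is visibly of the form $\{+,x,y,z\},\{-,x,y,z\},\{+,u,v,w\},\{-,u,v,w\}$ with $(x,y,z)=(a,b,c)$ and $(u,v,w)=(c,e,f)$. If instead $\{a,b\} = \{e,f\}$, then the data is $\{+,a,b,c\}$, $\{+,a,b,f\}$ (after renaming $\{d,e\}$), $\{-,a,b,d\}$, $\{-,c,e,f\}$ — here I would need to re-examine the pairing; writing $\{e,f\} = \{a,b\}$ the vertices $p_2,p_4$ carry weights $\{a,b\}\cup\{d\}$-type multisets and matching signs pair up to give Case (1). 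The main obstacle is the second step: deriving $\{a,b\}=\{e,f\}$ from $ab=ef$ is not purely formal, and I anticipate the bulk of the work is setting up the right auxiliary localization or $\mathbb{Z}_w$-component argument to rule out the spurious factorization $ab=ef$ with $\{a,b\}\neq\{e,f\}$.
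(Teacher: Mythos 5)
Your first step is correct as far as it goes: $\int_M 1=0$ does yield $0=\frac{d-c}{cd}\bigl(\frac{1}{ab}-\frac{1}{ef}\bigr)$, hence $c=d$ or $ab=ef$. But as you yourself flag, $ab=ef$ is strictly weaker than $\{a,b\}=\{e,f\}$ (e.g.\ $1\cdot 6=2\cdot 3$), and none of the remedies you sketch is actually carried out, so the proof has a genuine gap exactly where you anticipated one. Moreover, your leading candidate fix is unlikely to work: pushing forward a degree-$2$ equivariant class gives nothing new for a plain oriented manifold, since the only canonical degree-$2$ class is the pullback $t$ from $BS^1$ and $\int_M t = t\int_M 1=0$; there is no equivariant first Chern class available here, unlike in the almost complex setting. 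The $\mathbb{Z}_w$-component/Lemma \ref{l26} route would require knowing which fixed points share a component of $M^{\mathbb{Z}_w}$ for a suitable $w$, which is not immediate from Figure \ref{fig1-3} without further case analysis.

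The paper closes this gap by using a stronger identity from the start: the Atiyah--Singer signature formula (Theorem \ref{t22}) with $\mathrm{sign}(M)=0$, which is an identity of formal power series in $t$, not a single rational equation. Writing $P_x=1+2\sum_{j\geq 1}t^{jx}$, the signature becomes $P_aP_bP_c+P_dP_eP_f-P_aP_bP_d-P_cP_eP_f=(P_c-P_d)(P_aP_b-P_eP_f)=0$; since the power series ring is an integral domain, either $P_c=P_d$ (so $c=d$) or $P_aP_b=P_eP_f$, and the latter forces $\{a,b\}=\{e,f\}$ as multisets by comparing exponents from the bottom up (the same argument as in Lemma \ref{l33}). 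In other words, the signature identity is exactly the refinement of your product relation $ab=ef$ that pins down the individual weights. Your final verification that either conclusion yields Case (1) of Theorem \ref{t11} is fine. To repair your proof, replace the $\int_M 1$ computation with the signature computation (or supplement it with the power-series factorization); the factorization pattern you found is the right one, just applied to the wrong invariant.
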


\begin{proof} The fixed point data of $M$ is
\begin{center}
$\{+,a,b,c\}, \{+,d,e,f\}, \{-,a,b,d\}, \{-,c,e,f\}$. 
\end{center}
Since $\dim M \neq 0 \mod 4$, the signature of $M$ is equal to 0. By Theorem \ref{t22},
\begin{center}
$\displaystyle 0=\textrm{sign}(M) = \sum_{p \in M^{S^1}} \epsilon(p) \prod_{i=1}^{3} \frac{ 1+t^{w_{pi}}}{1-t^{w_{pi}}}= \sum_{p \in M^{S^1}} \epsilon(p) \prod_{i=1}^{3} [ (1+t^{w_{pi}}) ( \sum_{j=0}^{\infty} t^{j w_{pi}} )]$
$\displaystyle = \sum_{p \in M^{S^1}} \epsilon(p) \prod_{i=1}^{3} (1+2\sum_{j=1}^{\infty}t^{j w_{pi}} )$

$\displaystyle = (1+2 \sum_{j=1}^{\infty} t^{ja})(1+2 \sum_{j=1}^{\infty} t^{j b} ) (1+2 \sum_{j=1}^{\infty} t^{j c})+(1+2 \sum_{j=1}^{\infty}t^{jd})(1+2 \sum_{j=1}^{\infty} t^{je})(1+2\sum_{j=1}^{\infty}t^{j f})-(1+2 \sum_{j=1}^{\infty} t^{ja})(1+2 \sum_{j=1}^{\infty} t^{j b} )(1+2 \sum_{j=1}^{\infty} t^{jd})-(1+2 \sum_{j=1}^{\infty} t^{jc})( 1+2\sum_{j=1}^{\infty}t^{j e})(1+2 \sum_{j=1}^{\infty} t^{j f})$

$=\displaystyle [(1+2 \sum_{j=1}^{\infty} t^{j c})-(1+2 \sum_{j=1}^{\infty} t^{jd})][(1+2 \sum_{j=1}^{\infty} t^{ja})(1+2 \sum_{j=1}^{\infty} t^{j b} )-(1+2 \sum_{j=1}^{\infty} t^{j e} )(  1+2\sum_{j=1}^{\infty}t^{j f})]$.
\end{center}
It follows that $c=d$ or $\{a,b\}=\{e,f\}$. In either case, the fixed point data of $M$ belongs to Case (1) of Theorem \ref{t11}. \end{proof}

\begin{lemma} \label{l53} Let the circle act on a 6-dimensional compact oriented manifold $M$ with 4 fixed points. Suppose that Figure \ref{fig1-4} describes $M$ and satisfies the properties in Lemma \ref{l35}. Then $\{a,b\}=\{e,f\}$; the fixed point data of $M$ belongs to Case (1) of Theorem \ref{t11}.  \end{lemma}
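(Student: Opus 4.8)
The plan is to mimic exactly the strategy of Lemmas \ref{l51} and \ref{l52}: write down the fixed point data from Figure \ref{fig1-4}, namely
$\{+,a,b,c\},\ \{+,a,b,d\},\ \{-,c,e,f\},\ \{-,d,e,f\}$,
then apply the Atiyah--Singer index theorem (Theorem \ref{t22}) with the signature forced to be $0$ because $\dim M=6\not\equiv 0\bmod 4$. Expanding $\frac{1+t^{w}}{1-t^{w}}=1+2\sum_{j\ge 1}t^{jw}$ at each fixed point, the signature identity becomes
\[
0=\Big(\prod_{w\in\{a,b\}}(1+2\textstyle\sum_j t^{jw})\Big)\Big(\prod_{w\in\{c\}}(\cdots)+\prod_{w\in\{d\}}(\cdots)\Big)-\Big(\prod_{w\in\{e,f\}}(\cdots)\Big)\Big(\prod_{w\in\{c\}}(\cdots)+\prod_{w\in\{d\}}(\cdots)\Big),
\]
where I abbreviate $P_w:=1+2\sum_{j\ge1}t^{jw}$. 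That is, the four terms group as
\[
0=\big(P_a P_b-P_e P_f\big)\,\big(P_c+P_d\big).
\]
Since $P_c+P_d=2+2\sum_{j\ge1}(t^{jc}+t^{jd})$ is a nonzero power series (its constant term is $2$), we may cancel it in the ring of formal power series, obtaining $P_aP_b=P_eP_f$.

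The remaining step is to deduce $\{a,b\}=\{e,f\}$ from $P_aP_b=P_eP_f$ as an identity of formal power series in $t$. The cleanest route is to compare smallest exponents: $\min(a,b)=\min(e,f)$ by looking at the lowest-degree term beyond the constant, then peel it off and induct, exactly as in the proof of Lemma \ref{l33}; alternatively one observes that $P_w=\frac{1+t^w}{1-t^w}$ so $P_aP_b=P_eP_f$ rearranges to $(1+t^a)(1+t^b)(1-t^e)(1-t^f)=(1+t^e)(1+t^f)(1-t^a)(1-t^b)$, a polynomial identity whose roots (counted with multiplicity) on each side are the primitive data $\{a,b\}$ versus $\{e,f\}$ — matching the zero/pole structure forces the multisets to coincide. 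Either way one concludes $\{a,b\}=\{e,f\}$.

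Finally, once $\{a,b\}=\{e,f\}$, the fixed point data reads $\{+,a,b,c\},\{+,a,b,d\},\{-,c,a,b\},\{-,d,a,b\}$, which is visibly of the form $\{+,a,b,c\},\{-,a,b,c\}$ together with $\{+,a,b,d\},\{-,a,b,d\}$, i.e. Case (1) of Theorem \ref{t11} (with the two triples $\{a,b,c\}$ and $\{a,b,d\}$ playing the roles of $\{a,b,c\}$ and $\{d,e,f\}$ there). I expect the only mild subtlety to be the bookkeeping in the power-series (or polynomial) cancellation argument for $P_aP_b=P_eP_f\Rightarrow\{a,b\}=\{e,f\}$; the grouping of the four summands into the product $(P_aP_b-P_eP_f)(P_c+P_d)$ is the key algebraic observation and is entirely analogous to what was done for Figure \ref{fig1-3}, so no genuinely new difficulty should arise.
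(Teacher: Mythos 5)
Your proposal is correct and follows essentially the same route as the paper: the paper likewise factors the vanishing signature from Theorem \ref{t22} as $\bigl(P_c+P_d\bigr)\bigl(P_aP_b-P_eP_f\bigr)=0$ and concludes $\{a,b\}=\{e,f\}$. Your extra detail on why $P_aP_b=P_eP_f$ forces the multisets to agree (cancelling the unit $P_c+P_d$ and comparing lowest exponents) just fills in a step the paper leaves implicit.
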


\begin{proof} The proof is similar to Lemma \ref{l52}. The fixed point data of $M$ is
\begin{center}
$\{+,a,b,c\}, \{+,a,b,d\}, \{-,c,e,f\}, \{-,d,e,f\}$.
\end{center}
By Theorem \ref{t22}, we deduce that
\begin{center}
$\displaystyle 0=\textrm{sign}(M)$

$=\displaystyle [(1+2 \sum_{j=1}^{\infty} t^{j c})+(1+2 \sum_{j=1}^{\infty} t^{jd})][(1+2 \sum_{j=1}^{\infty} t^{ja})(1+2 \sum_{j=1}^{\infty} t^{j b} )-(1+2 \sum_{j=1}^{\infty} t^{j e} )(  1+2\sum_{j=1}^{\infty}t^{j f})]$.
\end{center}
It follows that $\{a,b\}=\{e,f\}$. The fixed point data of $M$ then belongs to Case (1) of Theorem \ref{t11}. \end{proof}

We are left with Figure \ref{fig1-5} in Lemma \ref{l41}. However, Figure \ref{fig1-5} requires many subcases to consider and more techniques.

\begin{lemma} \label{l54} Let the circle act on a 6-dimensional compact oriented manifold $M$ with 4 fixed points. Suppose that Figure \ref{fig1-5} describes $M$ and satisfies the properties in Lemma \ref{l35}. Assume that $b$ is the largest weight and $\dim F=4$, where $F$ is a connected component of $M^{\mathbb{Z}_b}$ that contains $p_1$ and $p_3$. Then the fixed point data of $M$ belongs to Case (1) of Theorem \ref{t11}. \end{lemma}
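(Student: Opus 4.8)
The plan is to exploit the hypothesis that $b$ is the largest weight. Since every weight is a positive integer $\le b$, the only positive multiple of $b$ occurring as a weight is $b$ itself, so Lemma \ref{l26} applies with $w=b$; moreover, at any $S^1$-fixed point $q$ lying in a connected component of $M^{\mathbb{Z}_b}$, the weights of $T_qM$ tangent to that component are multiples of $b$ and hence all equal $b$. Applying this to $F$: as $\dim F=4$, exactly two of the three weights at $p_1$ are tangent to $F$ (and so equal $b$) while the third is a normal weight, which is a non-multiple of $b$ and therefore lies in $\{1,\dots,b-1\}$; the three weights cannot all be $b$, for that would make $\dim F=6$. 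With the labels of Figure \ref{fig1-5} the weights at $p_1$ are $\{a,b,c\}$, so exactly one of $a,c$ equals $b$, and similarly exactly one of $d,f$ equals $b$ at $p_3$ --- four cases in all.

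Next I would apply Lemma \ref{l26} to $F$ with $w=b$. First suppose $F\cap M^{S^1}=\{p_1,p_3\}$, so this lemma pairs $p_1$ with $p_3$; both have tangent weights $\{b,b\}$ and one normal weight in $\{1,\dots,b-1\}$. Since $\epsilon(p_1)=+1=-\epsilon(p_3)$, the identity $\epsilon_M(p_1)=\epsilon_M(p_3)\cdot(-1)^{\nu_{p_1,p_3}^-+1}$ forces $\nu_{p_1,p_3}^-$ to be even, hence $0$ because there is a single normal weight; so the normal weights of $F$ at $p_1$ and at $p_3$ are congruent modulo $b$ and, lying in the same range, equal. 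Explicitly: $a=b,\,d=b$ gives $c=f$; $a=b,\,f=b$ gives $c=d$; $c=b,\,d=b$ gives $a=f$; $c=b,\,f=b$ gives $a=d$.

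I would then substitute these equalities into $\frac{1}{abc}+\frac{1}{ade}-\frac{1}{bdf}-\frac{1}{cef}=0$, which holds because $\int_M 1=0$ by Theorem \ref{t21} for dimensional reasons (as in the proof of Lemma \ref{l51}). In the cases $(a=b,\,d=b)$ and $(c=b,\,f=b)$ this reduces to $b=c$, respectively $a=b$, contradicting that the normal weight is $<b$; hence these cases do not occur. In the cases $(a=b,\,f=b)$ and $(c=b,\,d=b)$, substituting $c=d$ respectively $a=f$ turns the fixed point data into $\{+,b,b,c\},\{+,b,c,e\},\{-,b,b,c\},\{-,b,c,e\}$, respectively $\{+,a,b,b\},\{+,a,b,e\},\{-,a,b,b\},\{-,a,b,e\}$, which is exactly Case (1) of Theorem \ref{t11}. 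Finally, if $F\cap M^{S^1}$ is not just $\{p_1,p_3\}$ then it has four elements (it is even by Lemma \ref{l26}), so all four fixed points lie in $F$ and each has exactly two weights equal to $b$; enumerating the two weight patterns this allows, and using $\int_M 1=0$ once more in one of them, again produces fixed point data of Case (1) type.

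The step I expect to require the most care is the application of Lemma \ref{l26}: one must fix compatible orientations on $F$ and on its normal bundle so that $\epsilon_F$, $\epsilon_N$, the bijection $\sigma$, and the sign function $\nu$ are all well defined, and one must keep the sub-case $F\cap M^{S^1}=\{p_1,p_3\}$ separate from the sub-case where $F$ contains a third (hence a fourth) fixed point. Everything else is routine manipulation of the localization and signature formulas.
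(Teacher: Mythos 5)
Your proof is correct, but the case split is inverted relative to where the content actually lies, and this is the main difference from the paper. The sub-case $F\cap M^{S^1}=\{p_1,p_3\}$, which you treat in full detail via Lemma \ref{l26} and $\int_M 1=0$, is in fact vacuous: the second weight equal to $b$ at $p_1$ is the label of the edge joining $p_1$ to $p_2$ (if $a=b$) or to $p_4$ (if $c=b$), and since $b>a_2$, Property (2) of Lemma \ref{l35} forces that third vertex into $F$; Lemma \ref{l28} applied to the restricted action on $F$ (all tangent weights equal $b$) then forces the fourth vertex in as well. This is exactly how the paper proceeds, so the paper's entire proof lives in what you compress into your final sentence. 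That sentence is correct and fillable: requiring each of the four points to have exactly two weights equal to $b$ leaves only the patterns $a=e=f=b$ (giving $\{\pm,b,b,c\}$ and $\{\pm,b,b,d\}$, already Case (1) of Theorem \ref{t11}) and $c=d=e=b$ (giving $\{+,a,b,b\}$ twice and $\{-,b,b,f\}$ twice, where $\int_M 1=0$ or the signature identity yields $a=f$). But as written that step is a sketch rather than a proof, and it is the only part of your argument that is actually needed; I would expand it and delete, or reduce to a remark, the two-point analysis.
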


\begin{proof}
Suppose that $b$ is the largest weight. Since the multigraph Figure \ref{fig1-5} has an edge whose vertices have the same sign, Properties 1 and 2 of Lemma \ref{l35} imply that the largest weight is strictly bigger than $a_2 \in W_+$ where $a_2$ and $W_+$ are as in Lemma \ref{l33}; in particular $b$ is strictly bigger than $a_2$.

Since $\dim F=4$, exactly 2 weights at $p_1$ are divisible by (and hence equal to) $b$. That is, either $a=b$ or $c=b$. Suppose that $a=b$. Since $p_1$ and $p_2$ are connected by the edge with label $a$ ($=b$), by Property 2 of Lemma \ref{l35}, $p_2$ also lies in $F$. The circle action on $M$ restricts to a circle action on $F$. By Lemma \ref{l25}, $F$ is orientable; fix an orientation of $F$. If $p \in F^{S^1}$, every weight in $T_pF$ is a multiple of (and hence equal to) $b$. Applying Lemma \ref{l28} to the $S^1$-action on $F$, the $S^1$-action on $F$ has an even number of fixed points. Since $p_1,p_2,p_3 \in F^{S^1}$ and $F^{S^1} \subset M^{S^1}$, this implies that $p_4 \in F^{S^1}$. This means that each $p_i$ has exactly two weights that are equal to $b$, and hence it follows that $e=b$ and $f=b$. Then the fixed point data of $M$ is
\begin{center}
$\{+,b,b,c\}$, $\{+,b,b,d\}$, $\{-,b,b,d\}$, $\{-,b,b,c\}$.
\end{center}
This belongs to Case (1) of Theorem \ref{t11}.

Next, suppose that $c=b$. By an analogous argument as above, $c=b$ implies $p_4 \in F$ by Property 2 of Lemma \ref{l35}, and applying Lemma \ref{l28} to the restriction of the circle action on $M$ to $F$, it follows that $p_i \in F$ for all $i$, and hence $d=e=b$. Then the fixed point data of $M$ is
\begin{center}
$\{+,a,b,b\}$, $\{+,a,b,b\}$, $\{-,b,b,f\}$, and $\{-,b,b,f\}$.
\end{center}
By Theorem \ref{t22}, we deduce that
\begin{center}
$\displaystyle 0=\textrm{sign}(M)$

$=\displaystyle 2[(1+2 \sum_{j=1}^{\infty} t^{j a})-(1+2 \sum_{j=1}^{\infty} t^{jf})](1+2 \sum_{j=1}^{\infty} t^{jb})^2$.
\end{center}
It follows that $a=f$. The fixed point data of $M$ belongs to Case (1) of Theorem \ref{t11}. \end{proof}

\begin{lemma} \label{l55} Let the circle act on a 6-dimensional compact oriented manifold $M$ with 4 fixed points. Suppose that Figure \ref{fig1-5} describes $M$ and satisfies the properties in Lemma \ref{l35}. Assume that $a$ is the largest weight and $\dim F=4$, where $F$ is a connected component of $M^{\mathbb{Z}_a}$ that contains $p_1$ and $p_2$. Then the fixed point data of $M$ belongs to Case (1) of Theorem \ref{t11}. \end{lemma}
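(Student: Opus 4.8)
The plan is to run essentially the same style of argument as in the proof of Lemma \ref{l54}, exploiting that $a$ being the largest weight makes ``divisible by $a$'' the same as ``equal to $a$''. The fixed point data coming from Figure \ref{fig1-5} is
\[
\{+,a,b,c\},\quad \{+,a,d,e\},\quad \{-,b,d,f\},\quad \{-,c,e,f\},
\]
with $p_1p_2$ the edge labelled $a$. First I would record that, since the edge $p_1p_2$ joins two vertices of the same sign, Property (1) of Lemma \ref{l35} gives $a\notin\{a_1,a_2\}$, and as $a\in W_+$ this forces $a>a_2$; hence Property (2) of Lemma \ref{l35} applies to every edge labelled $a$, so any fixed point joined to $p_1$ or $p_2$ by such an edge lies in $F$. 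Because $a$ is the largest weight, a weight is a multiple of $a$ only when it equals $a$, so at a fixed point $q\in F$ the real dimension of $T_qF$ is twice the number of weights at $q$ equal to $a$; the hypothesis $\dim F=4$ then says that exactly two of the three weights at each of $p_1$ and $p_2$ equal $a$.

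At $p_1$ one of these two weights is the $p_1p_2$-weight $a$ and the other is $b$ or $c$; using the symmetry of Figure \ref{fig1-5} interchanging $p_3$ with $p_4$ (hence $b\leftrightarrow c$ and $d\leftrightarrow e$), which fixes the label $a$, swaps only same-sign vertices, and therefore preserves all the hypotheses, I may assume $b=a$, and then $c\neq a$. Likewise exactly one of $d,e$ equals $a$. If $d=a$, then a brief check using $\dim F=4$ at $p_1,p_2,p_3$ shows $c,e,f$ are all distinct from $a$, so no weight at $p_4$ is a multiple of $a$; thus the connected component of $M^{\mathbb{Z}_a}$ through $p_4$ is the single point $\{p_4\}$, and in particular $p_4\notin F$. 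Consequently $F^{S^1}=\{p_1,p_2,p_3\}$, while every weight of the $S^1$-action on the compact oriented (by Lemma \ref{l25}) manifold $F$ equals $a$, so Lemma \ref{l28} forces $|F^{S^1}|$ to be even -- a contradiction. Hence $d\neq a$, so $e=a$, which puts $p_4\in F$ as well; the weights at $p_3$ are then $\{a,d,f\}$ with $d\neq a$, and $\dim F=4$ forces $f=a$. The fixed point data becomes
\[
\{+,a,a,c\},\quad \{+,a,a,d\},\quad \{-,a,a,d\},\quad \{-,a,a,c\},
\]
which is Case (1) of Theorem \ref{t11}, as required.

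I expect the only real content to be the elimination of the branch $d=a$: everything else is bookkeeping with $\dim F$, whereas that step needs the global parity input of Lemma \ref{l28}, exactly as in the proof of Lemma \ref{l54}. A minor point to be careful about is the symmetry reduction to $b=a$; since the symmetry only permutes vertices of equal sign and fixes the label $a$, it preserves ``$a$ is the largest weight'' and ``$F$ is the dimension-$4$ component of $M^{\mathbb{Z}_a}$ containing $p_1$ and $p_2$'', so the reduction is legitimate.
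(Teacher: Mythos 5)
Your proof is correct and follows essentially the same route as the paper: use $\dim F=4$ to force exactly two weights equal to $a$ at $p_1$ and $p_2$, propagate membership in $F$ via Property (2) of Lemma \ref{l35}, and invoke the parity statement of Lemma \ref{l28} to pin down which weights equal $a$, arriving at the data $\{+,a,a,c\},\{+,a,a,d\},\{-,a,a,d\},\{-,a,a,c\}$. The only (harmless) organizational difference is that you collapse the paper's two subcases $b=a$ and $c=a$ into one via the $p_3\leftrightarrow p_4$ symmetry, and you deploy the Lemma \ref{l28} parity argument to exclude the branch $d=a$ rather than to force $p_4\in F$ directly.
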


\begin{proof} The proof is analogous to that of Lemma \ref{l54}. Since the multigraph Figure \ref{fig1-5} has an edge whose vertices have the same sign, Properties 1 and 2 of Lemma \ref{l35} imply that the largest weight is strictly bigger than $a_2 \in W_+$; in particular $a$ is strictly bigger than $a_2$. Since $\dim F=4$, exactly 2 weights at $p_1$ must be divisible by (and hence equal to) $a$. That is, either $b=a$ or $c=a$. If $b=a$, this case is the same as the case in Lemma \ref{l54} where $b$ is the largest weight and $a=b$. As in the proof of Lemma \ref{l54}, the fixed point data of $M$ in this case belongs to Case (1) of Theorem \ref{t11}.

Next, assume $c=a$. Since $p_1$ and $p_4$ are connected by the edge with label $c$ ($=a$), by Property 2 of Lemma \ref{l35}, $p_4$ also lies in $F$. The circle action on $M$ restricts to a circle action on $F$. By Lemma \ref{l25}, $F$ is orientable; fix an orientation of $F$. If $p \in F^{S^1}$, every weight in $T_pF$ is a multiple of (and hence equal to) $a$. Applying Lemma \ref{l28} to the $S^1$-action on $F$, the $S^1$-action on $F$ has an even number of fixed points. Since $p_1,p_2,p_4 \in F^{S^1}$ and $F^{S^1} \subset M^{S^1}$, this implies that $p_3 \in F^{S^1}$. This means that each $p_i$ has exactly two weights that are equal to $a$, and hence it follows that $d=f=a$. The fixed point data of $M$ is then $\{+,a,a,b\}$, $\{+,a,a,e\}$, $\{-,a,a,b\}$, $\{-,a,a,e\}$. This belongs to Case (1) of Theorem \ref{t11}. \end{proof}

\begin{lemma} \label{l56} Let the circle act on a 6-dimensional compact oriented manifold $M$ with 4 fixed points. Suppose that Figure \ref{fig1-5} describes $M$ and satisfies the properties in Lemma \ref{l35}. Assume that $b$ is the largest weight and $\dim F=2$, where $F$ is a connected component of $M^{\mathbb{Z}_b}$ that contains $p_1$ and $p_3$. Then one of the following holds for the fixed point data of $M$:
\begin{enumerate}
\item $\{+,A,B,C\},\{-,A,B,C\},\{+,D,E,F\},\{-,D,E,F\}$ for some positive integers $A$, $B$, $C$, $D$, $E$, and $F$.
\item $\{+,A,A+B,A+B+C\}, \{-,A,B,B+C\}, \{+,B,C,A+B\}, \{-,C,B+C,A+B+C\}$. for some positive integers $A$, $B$, and $C$. 
\end{enumerate}
\end{lemma}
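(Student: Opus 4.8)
The plan is to analyze the case where Figure \ref{fig1-5} describes $M$, $b$ is the largest weight, and $\dim F = 2$ where $F$ is the component of $M^{\mathbb{Z}_b}$ containing $p_1$ and $p_3$ (which are connected by the edge labelled $b$). Since $\dim F = 2$, exactly one weight at each of $p_1$ and $p_3$ is divisible by $b$, and by Property 2 of Lemma \ref{l35} together with the fact that $b$ is the largest weight, the only weights among all fixed points equal to a multiple of $b$ are the two copies of $b$ on the edge joining $p_1$ and $p_3$; in particular no weight strictly exceeds $b$, and $b$ itself occurs exactly twice. The fixed point data is then $\{+,a,b,c\}, \{+,a,d,e\}, \{-,b,d,f\}, \{-,c,e,f\}$ with $b > a,c,d,e,f$ (and $b \geq$ all of them, with equality impossible for $a,c,d,e,f$ by the multiplicity count). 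I would first record the two basic identities: from $\int_M 1 = 0$ (dimensional reasoning plus Theorem \ref{t21}),
\begin{equation}
\frac{1}{abc} + \frac{1}{ade} = \frac{1}{bdf} + \frac{1}{cef},
\end{equation}
and from $\mathrm{sign}(M) = 0$ (Theorem \ref{t22}), the generating-function identity
\begin{equation}
\prod_{w \in \{a,b,c\}}\!\!\Big(1 + 2\sum_{j\geq 1} t^{jw}\Big) + \prod_{w \in \{a,d,e\}}\!\!\Big(1 + 2\sum_{j\geq 1} t^{jw}\Big) = \prod_{w \in \{b,d,f\}}\!\!\Big(1 + 2\sum_{j\geq 1} t^{jw}\Big) + \prod_{w \in \{c,e,f\}}\!\!\Big(1 + 2\sum_{j\geq 1} t^{jw}\Big).
\end{equation}

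Next I would exploit the restriction to $F$: since $\dim F = 2$ and $F$ contains $p_1, p_3$, the circle acts on $F \cong S^2$ (a $2$-sphere with an $S^1$-action, by the dimension-$2$ classification), so $F^{S^1} = \{p_1, p_3\}$ exactly, confirming the multiplicity-$2$ statement for $b$ and ruling out $p_2$ or $p_4$ lying in $F$. I would also look at components $M^{\mathbb{Z}_w}$ for the other large weights and apply Lemma \ref{l26} to pairs of fixed points sharing a weight, to get congruence relations modulo those weights; in particular the edge labelled $f$ joins $p_3$ and $p_4$ (both of sign $-1$), so $f$ is a non-trivial weight whose component must contain both, forcing extra structure. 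The core of the argument is then a careful case split according to how $a,c,d,e,f$ compare — which of them is second-largest, whether $a_2$ (the second-smallest weight among sign-$+$ fixed points from Lemma \ref{l33}) is achieved, whether various pairs coincide — and in each branch either deriving $\{a,c\} = \{d,e\}$-type coincidences that collapse the data into Case (1), or showing the weights line up into the telescoping pattern $\{A, A+B, A+B+C\}$ etc. of Case (2). Comparing low-degree coefficients in the signature identity (smallest exponents, then next, as in the proof of Lemma \ref{l33}) should pin down the smallest weights on both sides and propagate constraints upward.

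I expect the main obstacle to be the combinatorial bookkeeping of the case analysis: with six weights $a,\dots,f$ subject only to $b$ being the (strict, for the others) maximum and to the two global identities, there are many a priori orderings, and one must show every one of them either forces a pairing coincidence (Case (1)) or forces exactly the $\mathbb{CP}^3$ relations $a$ vs $b-a$ vs $\dots$ (Case (2)). The delicate sub-case is presumably the one where no two of $a,c,d,e,f$ are equal, where I would need to use the signature identity beyond its lowest-order terms — matching, say, the coefficient of $t^{b}$, which on the left gets a contribution from $b$ itself but on the right only from sums $jw$ with $w < b$, forcing $b$ to be expressible as such a combination — to extract the additive relations, and then feed those back into the $\int_M 1 = 0$ identity to fix all weights in terms of three parameters. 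A secondary subtlety is ensuring the hypotheses ``$b$ largest, $\dim F = 2$'' are used correctly to exclude spurious solutions and to guarantee we are genuinely in the regime complementary to Lemmas \ref{l54} and \ref{l55}; once the weights are determined, checking that the resulting signs match the claimed patterns in (1) and (2) should be a short verification using $\epsilon_M(p) = \epsilon_F(p)\cdot\epsilon_N(p)$ and the pairing in Lemma \ref{l26}.
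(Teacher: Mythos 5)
Your proposal assembles the right toolkit (the ABBV identity $\int_M 1=0$, the signature identity, and Lemma \ref{l26} applied to components of $M^{\mathbb{Z}_w}$), but it stops at the level of a plan: the ``careful case split'' that you defer is the entire content of the lemma, and the one concrete mechanism that makes it tractable is missing. The paper's proof hinges on applying Lemma \ref{l26} to the weight $b$ and the pair $p_1,p_3\in F$: since $\dim F=2$, the normal weights are $\{a,c\}$ at $p_1$ and $\{d,f\}$ at $p_3$, all strictly less than $b$, and the sign formula $\epsilon_M(p_1)=\epsilon_M(p_3)\cdot(-1)^{\nu^-+1}$ combined with $\epsilon(p_1)=-\epsilon(p_3)$ forces $\nu^-\in\{0,2\}$, i.e.\ both congruences carry the \emph{same} sign. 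This yields exactly four patterns: $\{a\equiv d,\ c\equiv f\}$, $\{a\equiv f,\ c\equiv d\}$, $\{a\equiv -d,\ c\equiv -f\}$, $\{a\equiv -f,\ c\equiv -d\}$ mod $b$, which (because all four quantities are $<b$) become equalities or the relations $d=b-a$, $f=b-c$, etc. Case (2) of the lemma then emerges from the third pattern, where $\int_M 1=0$ pins down $e=a-c$ and the substitution $A=c$, $B=a-c$, $C=b-a$ produces the $\mathbb{CP}^3$ data. Your alternative suggestion of extracting additive relations by matching the coefficient of $t^{b}$ in the signature series is not developed and would face cross-terms $jw$ from many products; it is not a substitute for the congruence argument. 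You also assert without justification that $e<b$ ``by the multiplicity count'' --- Lemma \ref{l22} only gives evenness of the multiplicity of $b$, and $e=b$ would make it $4$, so that step as stated is wrong (and the paper never needs it).

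The remaining gap is the hardest branch, corresponding to the fourth pattern $a+f=b$, $c+d=b$, which your outline does not isolate. There the paper must bring in the ordered weights $a_1\le a_2$ of Lemma \ref{l33}: Property (1) of Lemma \ref{l35} forces $\{a_1,a_2\}\subset\{c,d,e\}$ (the labels $a$ and $f$ sit on same-sign edges), $\{a_1,a_2\}=\{c,d\}$ is excluded since it would give $b=c+d<a+f=b$, and after normalizing $\{c,e\}=\{a_1,a_2\}$ one gets the chain $\max\{c,e\}<\min\{a,f\}\le b/2\le\max\{a,f\}<d<b$. A \emph{second} application of Lemma \ref{l26}, now for the weight $d$ and the pair $p_2,p_3$, produces four further congruence subcases, each eliminated by an ordering contradiction, the signature identity, or $\int_M 1=0$. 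Without this two-stage use of Lemma \ref{l26} and the $a_1,a_2$ bookkeeping, I do not see how your outline closes out the case where no two of $a,c,d,e,f$ coincide, which is exactly the subcase you flag as delicate but leave open.
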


\begin{proof} Since there is an edge whose vertices have the same sign, Properties 1 and 2 of Lemma \ref{l35} imply that the largest weight is strictly bigger than $a_2 \in W_+$; in particular $b$ is strictly bigger than 1. Since $\dim F=2$, the other weights $a$ and $c$ at $p_1$ are smaller than $b$. Similarly, the other weights $d$ and $f$ at $p_3$ are smaller than $b$. We apply Lemma \ref{l26} for weight $b$ and the two fixed poitns $p_1$ and $p_3$ that lie in the same connected component $F$ of $M^{\mathbb{Z}_b}$; there exist a bijection $\sigma:\{1,2\} \to \{1,2\}$ and  a function $\nu:\{1,2\} \to \{-1,1\}$ such that 
\begin{center}
$w_{p_1,i} \equiv \nu(i) \cdot w_{p_3,\sigma(i)} \mod b$ for $1 \leq i \leq 2$, 
\end{center}
where $\{w_{p_1,1},w_{p_1,2}\}=\{a,c\}$ and $\{w_{p_3,1},w_{p_3,2}\}=\{d,f\}$. Moreover, $\epsilon(p_1)=\epsilon(p_3) \cdot (-1)^{\nu_{p_1,p_3}^- +1}$, where $\nu_{p_1,p_3}^-$ is the number of $i$'s with $\nu(i)=-1$. Since $\epsilon(p_1)=-\epsilon(p_3)$, it follows that $\nu_{p_1,p_3}^-=0$ or $2$. These imply that exactly one of the following holds:
\begin{enumerate}[(i)]
\item $a \equiv d \mod b$ and $c \equiv f \mod b$. ($\sigma(1)=1$, $\sigma(2)=2$, $\nu_{p_1,p_3}^-=0$)
\item $a \equiv f \mod b$ and $c \equiv d \mod b$. ($\sigma(1)=2$, $\sigma(2)=1$, $\nu_{p_1,p_3}^-=0$)
\item $a \equiv -d \mod b$ and $c \equiv -f \mod b$. ($\sigma(1)=1$, $\sigma(2)=2$, $\nu_{p_1,p_3}^-=2$)
\item $a \equiv -f \mod b$ and $c \equiv -d \mod b$. ($\sigma(1)=2$, $\sigma(2)=1$, $\nu_{p_1,p_3}^-=2$)
\end{enumerate}

Assume that Case (i) holds. Since $a,c,d,f<b$, this implies that $a=d$ and $c=f$. The fixed point data of $M$ is then
\begin{center}
$\{+,a,b,c\}$, $\{+,a,a,e\}$, $\{-,b,a,c\}$, $\{-,c,e,c\}$.
\end{center}
By Theorem \ref{t21},
\begin{center}
$\displaystyle 0=\int_M 1=\sum_{p \in M^{S^1}} \frac{1}{\prod_{i=1}^3 w_{pi}}=\frac{1}{abc}+\frac{1}{a^2e}-\frac{1}{abc}-\frac{1}{c^2e}$.
\end{center}
This implies that $a=c$. The fixed point data of $M$ belongs to Case (1) of the lemma.

Assume that Case (ii) holds. Then $a=f$ and $c=d$. The fixed point data of $M$ is
\begin{center}
$\{+,a,b,c\}$, $\{+,a,c,e\}$, $\{-,b,c,a\}$, $\{-,c,e,a\}$.
\end{center}
This belongs to Case (1) of the lemma.

Assume that Case (iii) holds. Then $a+d=b$ and $c+f=b$, i.e., $d=b-a$ and $f=b-c$. With these, by Theorem \ref{t21},
\begin{center}
$\displaystyle 0=\int_M 1=\sum_{p \in M^{S^1}} \frac{1}{\prod_{i=1}^3 w_{pi}}=\frac{1}{abc}+\frac{1}{a(b-a)e}-\frac{1}{b(b-a)(b-c)}-\frac{1}{ce(b-c)}$.
\end{center}
Clearing the denominators and simplifying, we conclude that $e=a-c$. Then the fixed point data of $M$ is
\begin{center}
$\{+,a,b,c\}$, $\{+,a,b-a,a-c\}$, $\{-,b,b-a,b-c\}$, $\{-,c,a-c,b-c\}$.
\end{center}
Let $A=c$, $B=a-c$, $C=b-a$. We can rewrite the fixed point data of $M$;
\begin{center}
$\{+,A+B,A+B+C,A\}$, $\{+,A+B,C,B\}$, $\{-,A+B+C,C,B+C\}$, $\{-,A,B,B+C\}$.
\end{center}
This is precisely Case (2) of the lemma.

Assume that Case (iv) holds. Then $a+f=b$ and $c+d=b$. Let $a_1$ and $a_2$ be as in Lemma \ref{l33}. By Property 2 of Lemma \ref{l35}, we have $a_2<a$ and $a_2<f$, since the edges of the labels $a$ and $f$ have vertices with the same sign. Next, because $b$, $c$, $d$, and $e$ are the only labels whose edges have vertices with different signs and because $b$ is the largest weight, by Property 1 of Lemma \ref{l35}, we must have $\{a_1,a_2\} \subset \{c,d,e\}$. We cannot have $\{a_1,a_2\}=\{c,d\}$, since then we have $b=c+d=a_1+a_2<a+f=b$. It follows that either $\{c,e\}=\{a_1,a_2\}$ or $\{d,e\}=\{a_1,a_2\}$. By reversing the orientation of $M$, which amounts to changing the sign of each fixed point (in other words, by the vertical symmetry of the multigraph Figure \ref{fig1-5}), we may assume without loss of generality that $\{c,e\}=\{a_1,a_2\}$. Since $\{c,e\}=\{a_1,a_2\}$, $a_2<a$, $a_2<f$, $a+f=b$, and $c+d=b$, we have 
\begin{equation}
\max\{c,e\} < \min\{a,f\} \leq \frac{b}{2} \leq \max\{a,f\}<d<b.
\end{equation}

Since $d>a_2$ and Figure \ref{fig1-5} satisfies the properties in Lemma \ref{l35}, $p_2$ and $p_3$ lie in the same connected component $F_d$ of $M^{\mathbb{Z}_d}$. As above, we apply Lemma \ref{l26} for the weight $d$; because no multiples of $d$ occur as weights other than $d$ itself and $\epsilon(p_2)=-\epsilon(p_3)$, it follows that $\nu_{p_2,p_3}^-=0$ or $2$, and exactly one of the following holds for the fixed point data of $p_2$ and that of $p_3$.
\begin{enumerate}
\item $a \equiv b \mod d$ and $e \equiv f \mod d$. ($\sigma(1)=1$, $\sigma(2)=2$, $\nu_{p_2,p_3}^-=0$)
\item $a \equiv f \mod d$ and $e \equiv b \mod d$. ($\sigma(1)=2$, $\sigma(2)=1$, $\nu_{p_2,p_3}^-=0$)
\item $a \equiv -b \mod d$ and $e \equiv -f \mod d$. ($\sigma(1)=1$, $\sigma(2)=2$, $\nu_{p_2,p_3}^-=2$)
\item $a \equiv -f \mod d$ and $e \equiv -b \mod d$. ($\sigma(1)=2$, $\sigma(2)=1$, $\nu_{p_2,p_3}^-=2$)
\end{enumerate}

Suppose that $a \equiv b \mod d$ and $e \equiv f \mod d$. Since $a<b=c+b-c<a+b-c=a+d$, we cannot have $a \equiv b \mod d$.

Suppose that $a \equiv f \mod d$ and $e \equiv b \mod d$. It follows that $a=f$ and $e+d=b$. Since $f=b-a$ and $d=b-c$, this means that $b=2a$ and $c=e$. The fixed point data of $M$ is then
\begin{center}
$\{+,a,2a,c\}$, $\{+,a,2a-c,c\}$, $\{-,2a,2a-c,a\}$, $\{-,c,c,a\}$.
\end{center}
By Theorem \ref{t22}, we deduce that
\begin{center}
$\displaystyle 0=\textrm{sign}(M)$

$=\displaystyle (1+2 \sum_{j=1}^{\infty} t^{ja})(1+2 \sum_{j=1}^{\infty} t^{jc})[(1+2 \sum_{j=1}^{\infty} t^{j (2a)})+(1+2 \sum_{j=1}^{\infty} t^{j(2a-c)})]-(1+2 \sum_{j=1}^{\infty} t^{ja})[(1+2 \sum_{j=1}^{\infty} t^{j(2a)})(1+2 \sum_{j=1}^{\infty} t^{j(2a-c)})+(1+2 \sum_{j=1}^{\infty} t^{jc})^2]$.
\end{center}
Dividing the equation by $(1+2 \sum_{j=1}^{\infty} t^{ja})$ and simplifying, we have
\begin{center}
$\displaystyle (1+2 \sum_{j=1}^{\infty} t^{jc})[(1+2 \sum_{j=1}^{\infty} t^{j (2a)})+(1+2 \sum_{j=1}^{\infty} t^{j(2a-c)})]=(1+2 \sum_{j=1}^{\infty} t^{j(2a)})(1+2 \sum_{j=1}^{\infty} t^{j(2a-c)})+(1+2 \sum_{j=1}^{\infty} t^{jc})^2$.
\end{center}
The equation only holds if $a=c$, but $c<a$ and hence this leads to a contradiction.

Suppose that $a \equiv -b \mod d$ and $e \equiv -f \mod d$. Since $e,f<d$, $e \equiv -f \mod d$ implies that $e+f=d$. Since $f=b-a$, this means that $e+b=a+d$. Next, $a \equiv -b \mod d$ implies that $a+b=2d$ because $a<d<b<2d$. Hence, $b=2d-a$, $c=b-d=d-a$, $e=a+d-b=2a-d$, and $f=b-a=2d-2a$. The fixed point data of $M$ is then
\begin{center}
$\{+,a,2d-a,d-a\}$, $\{+,a,d,2a-d\}$, $\{-,2d-a,d,2d-2a\}$, $\{-,d-a,2a-d,2d-2a\}$.
\end{center}
By Theorem \ref{t21},
\begin{center}
$\displaystyle 0=\int_M 1=\sum_{p \in M^{S^1}} \frac{1}{\prod_{i=1}^3 w_{pi}}=\frac{1}{a(2a-d)(d-a)}+\frac{1}{ad(2a-d)}-\frac{1}{(2d-a)d(2d-2a)}-\frac{1}{(d-a)(2a-d)(2d-2a)}$.
\end{center}
We simplify this to $(2d-a)(d-2a)=0$, but this cannot hold because $2d-a=b>0$ and $2a-d=e>0$.

Suppose that $a \equiv -f \mod d$ and $e \equiv -b \mod d$. In this case, $a \equiv -f \mod d$ cannot hold, since $-f=a-b$ and $\frac{b}{2}<d<b$. \end{proof}

\begin{lemma} \label{l57} Let the circle act on a 6-dimensional compact oriented manifold $M$ with 4 fixed points. Suppose that Figure \ref{fig1-5} describes $M$ and satisfies the properties in Lemma \ref{l35}. Assume that $a$ is the largest weight and $\dim F=2$, where $F$ is a connected component of $M^{\mathbb{Z}_a}$ that contains $p_1$ and $p_2$. Then the fixed point data of $M$ belongs to Case (1) of Theorem \ref{t11}. \end{lemma}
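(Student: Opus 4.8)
The plan is to mirror the structure of Lemma \ref{l56} with the roles of the labels adjusted to the configuration of Figure \ref{fig1-5}. First I would observe that, exactly as in Lemmas \ref{l54}--\ref{l56}, since the multigraph has an edge whose endpoints have the same sign, Properties 1 and 2 of Lemma \ref{l35} force the largest weight $a$ to be strictly greater than $a_2 \in W_+$. Since $\dim F=2$, the remaining weights $b$ and $c$ at $p_1$ are strictly smaller than $a$, and likewise the remaining weights at $p_2$, namely $d$ and $e$, are strictly smaller than $a$. I would then apply Lemma \ref{l26} for the weight $a$ to the pair $p_1$, $p_2$, which lie in the same connected component $F$ of $M^{\mathbb{Z}_a}$. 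Because no multiple of $a$ other than $a$ itself occurs as a weight (being the largest), we get a bijection $\sigma:\{1,2\}\to\{1,2\}$ and a sign function $\nu:\{1,2\}\to\{-1,1\}$ with $w_{p_1,i}\equiv \nu(i)\, w_{p_2,\sigma(i)} \bmod a$, where $\{w_{p_1,1},w_{p_1,2}\}=\{b,c\}$ and $\{w_{p_2,1},w_{p_2,2}\}=\{d,e\}$. Crucially, here $\epsilon(p_1)=\epsilon(p_2)$, so the relation $\epsilon(p_1)=\epsilon(p_2)\cdot(-1)^{\nu^-_{p_1,p_2}+1}$ forces $\nu^-_{p_1,p_2}$ to be \emph{odd}, i.e. $\nu^-_{p_1,p_2}=1$. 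This is the key difference from Lemma \ref{l56}: exactly one of the two congruences carries a minus sign.

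Next I would enumerate the resulting cases. With $\nu^-=1$, and up to swapping $\sigma$, the possibilities are essentially: (i) $b\equiv -d$, $c\equiv e \bmod a$; (ii) $b\equiv d$, $c\equiv -e \bmod a$; (iii) $b\equiv -e$, $c\equiv d \bmod a$; (iv) $b\equiv e$, $c\equiv -d \bmod a$. Since all of $b,c,d,e$ lie in $(0,a)$, a congruence $x\equiv y \bmod a$ with $x,y\in(0,a)$ forces $x=y$, while $x\equiv -y\bmod a$ forces $x+y=a$. So each case pins down concrete relations, e.g. in (i) one gets $c=e$ and $b+d=a$. In each case I would substitute the relations into the fixed point data
\begin{center}
$\{+,a,b,c\}$, $\{+,a,d,e\}$, $\{-,b,d,f\}$, $\{-,c,e,f\}$
\end{center}
and then invoke the ABBV localization formula $\int_M 1 = 0$ (valid since $\dim M \not\equiv 0 \bmod 4$, so the relevant push-forward lands in a negative degree, exactly as used in Lemmas \ref{l51} and \ref{l56}), namely
\begin{center}
$\displaystyle 0 = \frac{1}{abc} + \frac{1}{ade} - \frac{1}{bdf} - \frac{1}{cef}$.
\end{center}
In each case, clearing denominators should collapse to a factored equation yielding either $b=c$, or $d=e$, or $\{b,c\}=\{d,e\}$, or a relation such as $f$ equals something symmetric; any of these makes the two $+$ fixed points carry the same weight multiset as the two $-$ fixed points (after reordering), i.e. the fixed point data lands in Case (1) of Theorem \ref{t11}. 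Where the localization identity alone is not decisive, I would bring in Theorem \ref{t22} (the signature $=0$ identity) as in Lemma \ref{l56}, factoring the generating-function identity to read off the needed equality of weights; and I would also be free to use the vertical symmetry of Figure \ref{fig1-5} (reversing the orientation of $M$, which swaps the two $+$ vertices with the two $-$ vertices) to cut the number of genuinely distinct subcases roughly in half.

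I expect the main obstacle to be the same one that makes Lemma \ref{l56} long: a subcase in which the $\int_M 1 = 0$ relation, after substituting the congruence-forced relations, does \emph{not} immediately collapse, so one is left with a Diophantine-looking equation in two or three of the variables together with the inequality constraints $0 < b,c,d,e < a$ and $f>0$. In that situation I would, as in Lemma \ref{l56}, either derive a second constraint by applying Lemma \ref{l26} again for the next-largest weight among $b,c,d,e$ (whichever exceeds $a_2$, using Property 2 of Lemma \ref{l35} to locate the two fixed points sharing that weight in a common $\mathbb{Z}_w$-component), or push through the full signature identity of Theorem \ref{t22} and factor it. The bookkeeping of which weights can equal $a_1$ or $a_2$ — forced by Property 1 of Lemma \ref{l35}, since only $b$, $c$, $d$, $e$ are labels of edges joining oppositely-signed vertices — will again be needed to rule out the degenerate possibilities and to pin down which of $b,c,d,e$ are the two smallest global weights. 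Once every subcase has been shown to force an equality among the weights that symmetrizes the $+$ and $-$ data, the conclusion that the fixed point data belongs to Case (1) of Theorem \ref{t11} follows, and notably Case (2) does not arise here — in contrast with Lemma \ref{l56} — because the $\nu^- $ odd parity prevents the "staircase" relations $a,a+b,a+b+c$ from appearing.
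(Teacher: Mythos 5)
Your proposal is correct and follows essentially the same route as the paper: the decisive observation that $\epsilon(p_1)=\epsilon(p_2)$ forces $\nu^-_{p_1,p_2}=1$ in Lemma \ref{l26}, the resulting four congruence subcases with $b,c,d,e<a$ pinning down equalities or sums equal to $a$, and then $\int_M 1=0$ or the signature identity of Theorem \ref{t22} to force the remaining equality of weights in each subcase. The paper in fact settles two of the four subcases with the signature identity and the other two with localization, exactly the fallback you anticipate, and no second application of Lemma \ref{l26} turns out to be needed.
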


\begin{proof}
The proof is similar to that of Lemma \ref{l56}. Suppose that $a$ is the largest weight. By Property 2 of Lemma \ref{l35}, since its edge has vertices of the same sign, the largest weight is strictly bigger than $a_2 \in W_+$ where $a_2$ and $W_+$ are as in Lemma \ref{l33}; in particular $a$ is strictly bigger than 1.

Since $\dim F=2$, the other weights $b$ and $c$ at $p_1$ are smaller than $a$, and the other weights $d$ and $e$ at $p_2$ are smaller than $a$. Applying Lemma \ref{l26} for weight $a$ and the two fixed points $p_1$ and $p_2$ that lie in the same connected component $F$ of $M^{\mathbb{Z}_a}$, there exist a bijection $\sigma:\{1,2\} \to \{1,2\}$ and a map $\nu:\{1,2\} \to \{-1,1\}$ such that 
\begin{center}
$w_{p_1,i} \equiv \nu(i) \cdot w_{p_2,\sigma(i)} \mod b$ for $1 \leq i \leq 2$, 
\end{center}
where $\{w_{p_1,1},w_{p_1,2}\}=\{b,c\}$ and $\{w_{p_2,1},w_{p_2,2}\}=\{d,e\}$. Moreover, $\epsilon(p_1)=\epsilon(p_2) \cdot (-1)^{\nu_{p_1,p_2}^- +1}$, where $\nu_{p_1,p_2}^-$ is the number of $i$'s with $\nu(i)=-1$. Since $\epsilon(p_1)=\epsilon(p_2)$, it follows that $\nu_{p_1,p_2}=1$. These imply that exactly one of the following holds:
\begin{enumerate}[(i)]
\item $b \equiv d \mod a$ and $c \equiv -e \mod a$. ($\sigma(1)=1$, $\sigma(2)=2$, $\nu(1)=1$, $\nu(2)=-1$)
\item $b \equiv e \mod a$ and $c \equiv -d \mod a$. ($\sigma(1)=2$, $\sigma(2)=1$, $\nu(1)=1$, $\nu(2)=-1$)
\item $b \equiv -d \mod a$ and $c \equiv e \mod a$. ($\sigma(1)=1$, $\sigma(2)=2$, $\nu(1)=-1$, $\nu(2)=1$)
\item $b \equiv -e \mod a$ and $c \equiv d \mod a$. ($\sigma(1)=2$, $\sigma(2)=1$, $\nu(1)=-1$, $\nu(2)=1$)
\end{enumerate}

Suppose that $b \equiv d \mod a$ and $c \equiv -e \mod a$. This implies that $b=d$ and $c+e=a$. The fixed point data of $M$ is then
\begin{center}
$\{+,c+e,b,c\}$, $\{+,c+e,b,e\}$, $\{-,b,b,f\}$, $\{-,c,e,f\}$.
\end{center}
By Theorem \ref{t22}, we deduce that
\begin{center}
$\displaystyle 0=\textrm{sign}(M)$

$=\displaystyle (1+2 \sum_{j=1}^{\infty} t^{j(c+e)})(1+2 \sum_{j=1}^{\infty} t^{j b})[(1+2 \sum_{j=1}^{\infty} t^{j c})+(1+2 \sum_{j=1}^{\infty} t^{je})]-(1+2 \sum_{j=1}^{\infty} t^{jf})[(1+2 \sum_{j=1}^{\infty} t^{j b})^2+(1+2 \sum_{j=1}^{\infty} t^{j c})(1+2 \sum_{j=1}^{\infty} t^{je})]$.
\end{center}
That is, we must have
\begin{center}
$\displaystyle (1+2 \sum_{j=1}^{\infty} t^{j(c+e)})(1+2 \sum_{j=1}^{\infty} t^{j b})[(1+2 \sum_{j=1}^{\infty} t^{j c})+(1+2 \sum_{j=1}^{\infty} t^{je})]$

$\displaystyle =(1+2 \sum_{j=1}^{\infty} t^{jf})[(1+2 \sum_{j=1}^{\infty} t^{j b})^2+(1+2 \sum_{j=1}^{\infty} t^{j c})(1+2 \sum_{j=1}^{\infty} t^{je})]$.
\end{center}
This implies that $b=c$ and $c+e=f$, or $e=b$ and $c+e=f$. Either case belongs to Case (1) of Theorem \ref{t11}.

Suppose that $b \equiv e \mod a$ and $c \equiv -d \mod a$. This implies that $b=e$ and $c+d=a$. The fixed point data of $M$ is then
\begin{center}
$\{+,c+d,b,c\}$, $\{+,c+d,d,b\}$, $\{-,b,d,f\}$, $\{-,c,b,f\}$.
\end{center}
By Theorem \ref{t21},
\begin{center}
$\displaystyle 0=\int_M 1=\sum_{p \in M^{S^1}} \frac{1}{\prod_{i=3}^3 w_{pi}}=\frac{1}{(c+d)bc}+\frac{1}{(c+d)db}-\frac{1}{bdf}-\frac{1}{cbf}$.
\end{center}
Therefore, $f=c+d$. This belongs to Case (1) of Theorem \ref{t11}.

Suppose that $b \equiv -d \mod a$ and $c \equiv e \mod a$. This implies that $b+d=a$ and $c=e$. The fixed point data of $M$ is then
\begin{center}
$\{+,b+d,b,c\}$, $\{+,b+d,d,c\}$, $\{-,b,d,f\}$, $\{-,c,c,f\}$.
\end{center}
By Theorem \ref{t22}, we deduce that
\begin{center}
$\displaystyle 0=\textrm{sign}(M)$

$=\displaystyle (1+2 \sum_{j=1}^{\infty} t^{j(b+d)})(1+2 \sum_{j=1}^{\infty} t^{j c})[(1+2 \sum_{j=1}^{\infty} t^{j b})+(1+2 \sum_{j=1}^{\infty} t^{jd})]-(1+2 \sum_{j=1}^{\infty} t^{jf})[(1+2 \sum_{j=1}^{\infty} t^{j b})(1+2 \sum_{j=1}^{\infty} t^{j d})+(1+2 \sum_{j=1}^{\infty} t^{j c})^2]$.
\end{center}
That is, we must have
\begin{center}
$\displaystyle (1+2 \sum_{j=1}^{\infty} t^{j(b+d)})(1+2 \sum_{j=1}^{\infty} t^{j c})[(1+2 \sum_{j=1}^{\infty} t^{j b})+(1+2 \sum_{j=1}^{\infty} t^{jd})]$

$\displaystyle =(1+2 \sum_{j=1}^{\infty} t^{jf})[(1+2 \sum_{j=1}^{\infty} t^{j b})(1+2 \sum_{j=1}^{\infty} t^{j d})+(1+2 \sum_{j=1}^{\infty} t^{j c})^2]$.
\end{center}
This implies that either $b=c$ and $f=b+d$, or $d=c$ and $f=b+d$. In either case, this belongs to Case (1) of Theorem \ref{t11}.

Suppose that $b \equiv -e \mod a$ and $c \equiv d \mod a$. This implies that $b+e=a$ and $c=d$. The fixed point data of $M$ is then
\begin{center}
$\{+,b+e,b,c\}$, $\{+,b+e,c,e\}$, $\{-,b,c,f\}$, $\{-,c,e,f\}$.
\end{center}
By Theorem \ref{t21},
\begin{center}
$\displaystyle 0=\int_M 1=\sum_{p \in M^{S^1}} \frac{1}{\prod_{i=3}^3 w_{pi}}=\frac{1}{(b+e)bc}+\frac{1}{(b+e)ce}-\frac{1}{bcf}-\frac{1}{cef}$.
\end{center}
Therefore, $f=b+e$ and thus $f=a$. This belongs to Case (1) of Theorem \ref{t11}. \end{proof}

With all of the above, we are ready to prove our main result, Theorem \ref{t11}.

\begin{proof}[\textbf{Proof of Theorem \ref{t11}}]
By Lemma \ref{l41}, exactly one of the figures in Figure \ref{fig1} occurs as a signed labeled multigraph describing $M$ that satisfies the properties in Lemma \ref{l35}. If Figure \ref{fig1-1} describes $M$, the fixed point data is precisely Case (1) of Theorem \ref{t11}. By Lemmas \ref{l51}, \ref{l52}, and \ref{l53}, if any of Figures \ref{fig1-2}, \ref{fig1-3}, or \ref{fig1-4} describes $M$, then the fixed point data of $M$ belongs to Case (1) of Theorem \ref{t11}.

Suppose that Figure \ref{fig1-5} describes $M$. By permuting $p_1$ and $p_2$, by permuting $p_3$ and $p_4$, and by changing the sign of every fixed point if necessary, without loss of generality, we may assume that exactly one of the following cases holds for the largest weight:
\begin{enumerate}
\item $b$ is the largest weight.
\item $a$ is the largest weight.
\end{enumerate}

Since the multigraph Figure \ref{fig1-5} has an edge whose vertices have the same sign, Properties 1 and 2 of Lemma \ref{l35} imply that the largest weight is strictly bigger than $a_2 \in W_+$ where $a_2$ and $W_+$ are as in Lemma \ref{l33}.

Suppose that $b$ is the largest weight. By Property 2 of Lemma \ref{l35}, $p_1$ and $p_3$ lie in the same connected component $F$ of $M^{\mathbb{Z}_b}$. Since $b>a_2$ is the largest weight, $\dim F=2$ or $\dim F=4$. If $\dim F=4$, by Lemma \ref{l54}, the fixed point data of $M$ belongs to Case (1) of Theorem \ref{t11}. Assume that $\dim F=2$. Then by Lemma \ref{l56}, one of the following holds for the fixed point data of $M$:
\begin{enumerate}[(i)]
\item $\{+,a,b,c\},\{-,a,b,c\},\{+,d,e,f\},\{-,d,e,f\}$ for some positive integers $a$, $b$, $c$, $d$, $e$, and $f$.
\item $\{+,a,a+b,a+b+c\}, \{-,a,b,b+c\}, \{+,b,c,a+b\}, \{-,c,b+c,a+b+c\}$. for some positive integers $a$, $b$, and $c$.
\end{enumerate}
The fixed point data of Case (i) is Case (1) of Theorem \ref{t11}, and the fixed point data of Case (ii) is Case (2) of Theorem \ref{t11}.

Suppose that $a$ is the largest weight. By Property 2 of Lemma \ref{l35}, $p_1$ and $p_2$ lie in the same connected component $F$ of $M^{\mathbb{Z}_a}$. Since $a>a_2$ is the largest weight, $\dim F=2$ or $\dim F=4$. If $\dim F=4$, by Lemma \ref{l55}, the fixed point data of $M$ belongs to Case (1) of Theorem \ref{t11}. If $\dim F=2$, by Lemma \ref{l57}, the fixed point data of $M$ belongs to Case (1) of Theorem \ref{t11}. \end{proof}

\section{Comparison with results on different types of manifolds} \label{s6}

In this section, we compare circle actions on different types of 6-dimensional manifolds that have 4 fixed points.

We consider almost complex manifolds. Throughout this section, suppose that a circle action on an almost complex manifold $(M,J)$ preserves the almost complex structure $J$. Then the sign of every weight at a fixed point is well-defined. While for oriented manifolds on any even dimension there exists a circle action with 2 fixed points (a rotation of $S^{2n}$), for almost complex manifolds if the circle acts on a compact almost complex manifold with 2 fixed points, then the dimension of the manifold must be either 2 or 6; see \cite{J1}, \cite{Ko1}, \cite{Ko2}, \cite{M2}, \cite{PT}.
In addition, an almost complex manifold with 3 fixed points only exists in dimension 4, see \cite{J1}; also see \cite{J0}.

We compare circle actions on oriented manifolds and almost complex manifolds, in dimension 6 and when there are 4 fixed points.

\begin{theo} \cite{J3} \label{t61} Let the circle act on a 6-dimensional compact almost complex manifold $M$ with 4 fixed points. Then there exist positive integers $a$, $b$, $c$, and $d$ so that one of the following holds for the weights at the fixed points as complex $S^1$-representations:
\begin{enumerate}[(1)]
\item $\{a,b,c\}$, $\{-a,b-a,c-a\}$, $\{-b,a-b,c-b\}$, $\{-c,a-c,b-c\}$. Here, $a$, $b$, and $c$ are mutually distinct.
\item $\{a,a+b,a+2b\}$, $\{-a,b,a+2b\}$, $\{-a-2b,-b,a\}$, $\{-a-2b,-a-b,-a\}$.
\item $\{1,2,3\}$, $\{-1,1,a\}$, $\{-1,-a,1\}$, $\{-1,-2,-3\}$.
\item $\{-a-b,a,b\}$, $\{-c-d,c,d\}$, $\{-a,-b,a+b\}$, $\{-c,-d,c+d\}$.
\item $\{-3a-b,a,b\}$, $\{-2a-b,3a+b,3a+2b\}$, $\{-a,-a-b,2a+b\}$, $\{-b,-3a-2b,a+b\}$ up to reversing the circle action if necessary.
\item $\{-a-b,2a+b,b\}$, $\{-2a-b,a,b\}$, $\{-b,-2a-b,a+b\}$, $\{-a,-b,2a+b\}$.
\end{enumerate} \end{theo}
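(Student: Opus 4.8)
We sketch the main steps of a proof.

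Because the almost complex structure $J$ determines an orientation, the Hirzebruch $\chi_y$-genus of $M$ is defined and is rigid under the circle action. Localizing the equivariant $\chi_y$-genus at the isolated fixed points gives $\chi_y(M)=\sum_{p\in M^{S^1}}(-y)^{n_p}$, where $n_p$ is the number of negative weights at $p$ as a complex $S^1$-representation; on the other hand, Serre duality yields the symmetry $\chi_y(M)=(-y)^3\,\chi_{1/y}(M)$. Setting $N_i=\#\{p\in M^{S^1}:n_p=i\}$ and comparing coefficients, one gets $N_0=N_3$ and $N_1=N_2$, so $(N_0,N_1,N_2,N_3)$ is one of $(2,0,0,2)$, $(1,1,1,1)$, $(0,2,2,0)$ because $N_0+N_1+N_2+N_3=4$. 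The proof then splits into these three cases.

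In each case one extracts constraints on the (signed) weights from two sources. First, from the Atiyah--Bott--Berline--Vergne localization theorem (Theorem~\ref{t21}): since $\dim M=6$, the equivariant classes $1$, $c_1$, and $c_1^2$ integrate to $0$, so
\[
0=\sum_{p}\frac{1}{\prod_i w_{pi}},\qquad
0=\sum_{p}\frac{\sum_i w_{pi}}{\prod_i w_{pi}},\qquad
0=\sum_{p}\frac{\bigl(\sum_i w_{pi}\bigr)^2}{\prod_i w_{pi}},
\]
while $\int_M c_1^{3}$ is a fixed topological integer; together with the equivariant Todd genus (and, if needed, other twisted operators) these yield rational identities among the weights. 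Second -- and this is where the signs genuinely enter -- for each positive integer $k$ the fixed set $M^{\mathbb{Z}_k}$ is a union of compact almost complex $S^1$-manifolds with isolated fixed points of strictly smaller dimension, and such manifolds (in dimensions $2$ and $4$, with few fixed points) are classified, see \cite{J1}; applying this to the component of $M^{\mathbb{Z}_k}$ through a fixed point yields congruences $w_{pi}\equiv\pm w_{qj}\bmod k$ between weights at fixed points lying in one component, with the ambiguous sign now pinned down by $J$ -- the almost complex analogue of Lemma~\ref{l26}. Equivalently, one may take the unsigned fixed point data to be one of the two types of Theorem~\ref{t11} and enumerate the compatible sign assignments; but this still rests on the same congruences.

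I expect the main obstacle to be the case-by-case solution of the resulting systems. For $(2,0,0,2)$ one shows that no connected such $M$ exists: the component of $M^{\mathbb{Z}_w}$ through a fixed point carrying the largest weight $w$ is a $2$-sphere, which forces a weight of opposite sign at its other pole and contradicts $n_p\in\{0,3\}$. For $(1,1,1,1)$ -- a single ``source'', a single ``sink'', and two intermediate fixed points -- the constraints collapse to Cases (1)--(3); the delicate point is Case (3), $\{1,2,3\}$, $\{-1,1,a\}$, $\{-1,-a,1\}$, $\{-1,-2,-3\}$, a genuine one-parameter family, so the argument must be sharp enough to produce it and to exclude everything else. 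For $(0,2,2,0)$ -- two index-$1$ and two index-$2$ fixed points -- the analysis branches on how the index-$1$ fixed points are joined to the index-$2$ fixed points through the isotropy submanifolds $M^{\mathbb{Z}_k}$, and this produces Cases (4), (5), (6); the bookkeeping of degenerate configurations (coincidences among weights, $4$-dimensional isotropy components) accounts for most of the length. In every branch the final step is to solve the linear and modular equations and normalise the free parameters, reversing the $S^1$-action where necessary, as in Case (5).
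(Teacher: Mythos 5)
You should first note that the present paper contains no proof of Theorem \ref{t61}: it is an external result, quoted from \cite{J3} in Section \ref{s6} purely for comparison with Theorem \ref{t11}, so there is no internal argument here to measure your proposal against. Judged on its own terms, your outline does point in the direction of the cited reference --- rigidity of the Hirzebruch $\chi_y$-genus to constrain $(N_0,N_1,N_2,N_3)$, the vanishing of $\int_M 1$, $\int_M c_1$, $\int_M c_1^2$ for degree reasons, and congruences between weights at fixed points lying in a common component of $M^{\mathbb{Z}_k}$ --- but it is a roadmap rather than a proof: essentially the entire content of the theorem lies in the case-by-case solution of those linear and modular systems, including producing the exact one-parameter family of Case (3) and the three families (4)--(6) and excluding everything else, and you explicitly defer all of that.

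Moreover, the one step you do spell out does not work as stated. To exclude $(N_0,N_1,N_2,N_3)=(2,0,0,2)$ you argue that the $2$-dimensional component of $M^{\mathbb{Z}_w}$ through a fixed point carrying the largest weight $w$ is a sphere whose other pole acquires a weight of opposite sign, ``contradicting $n_p\in\{0,3\}$''. There is no contradiction: if $p$ has weights $\{w,u,v\}$ with $u,v>0$ and the other pole $q$ has tangent weight $-w$ along the sphere, then $q$ can perfectly well satisfy $n_q=3$, say with weights $\{-w,-x,-y\}$ where $0<x,y<w$, and the required congruence $\{u,v\}\equiv\{-x,-y\}\bmod w$ is satisfiable (for instance $u+x=w$ and $v+y=w$). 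Ruling out Todd genus $2$ genuinely requires combining the localization identities with a finer analysis of all the isotropy components simultaneously; it is one of the substantive points of \cite{J3}, not a one-line observation. Until that case and the remaining branches are actually solved, the proposal is an announcement of a strategy, not a proof.
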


For a circle action on a compact almost complex manifold, let $p$ be an isolated fixed point. Suppose that $p$ has weights $\{w_{p1},\cdots,w_{pn}\}$ as complex $S^1$-representations, for some non-zero integers $w_{pi}$'s. If $p$ has exactly $n_p$ negative weights, the sign of $p$ is $(-1)^{n_p}$, and hence the fixed point data of $p$ as real $S^1$-representations is $\{(-1)^{n_p},|w_{p1}|,\cdots,|w_{pn}|\}$. With this understood, we see that Case 1 and Case 5 of Theorem \ref{t61} belong to Case (2) of Theorem \ref{t11}, and all the other cases of Theorem \ref{t61} belong to Case (1) of Theorem \ref{t11}.

Note that in Cases 1, 2, 4, and 5 in Theorem \ref{t61}, there exists a manifold with the fixed point data. Case 3 with $a=2$ or $3$ belongs to Case 1 in Theore \ref{t61}. In Case 3, if $a=4$ or $5$, there are examples; see \cite{A}, \cite{Mc}. Case 6 of Theorem \ref{t61} has a possibility that this might be realized as a blow up $S^2$ in $S^6$ equipped with a rotation having 2 fixed points; see \cite{J3} for discussion on this.

If furthermore $M$ admits a symplectic structure and the circle action is Hamiltonian, only the fixed point data of Cases 1-3 in Theorem \ref{t61} can occur, and in Case 3 only possible values for $a$ are $2$, $3$, $4$, and $5$; see \cite{T}. Therefore, in this case for each fixed point data there exists a manifold.

A natural question is what is a possible fixed point data for an 8-dimensional compact oriented $S^1$-manifold with 4 fixed points. The examples of such a manifold are an equivariant sum (or a disjoint union) of rotations of two 8-spheres, $S^2 \times S^6$, and $S^4 \times S^4$. The fixed point datum for the latter two manifolds belong to the fixed point data of the first example. On the other hand, if the circle acts on an 8-dimensional almost complex manifold with 4 fixed points, all the Chern numbers and the Hirzebruch $\chi_y$-genus agree with those of $S^2 \times S^6$ \cite{J4}. For an 8-dimensional oriented $S^1$-manifold with 4 fixed points, one may ask if the fixed point data is the same as that of a disjoint union of rotations of two 8-spheres.

\section{Converting the fixed point data into the empty collection} \label{s7}

In \cite{J6}, the author showed that if the circle acts on a 6-dimensional compact oriented manifold $M$ with isolated fixed points, there is a systematic way of converting the fixed point data of $M$ into the empty collection by applying a combination of a number of types of operations on it. This is proved by showing that we can successively take equivariant connected sums at fixed points of $M$ with itself, $\mathbb{CP}^3$, and 6-dimensional analogue of the Hirzebruch surfaces (and these with opposite orientations) to a fixed point free action on a compact oriented 6-manifold. In this section, we show how this result applies to the case that $M$ has 4 fixed points.

Consider a circle action on a compact oriented manifold $M$ with a non-empty finite fixed point set. Let $p$ be an (isolated) fixed point. In the introduction, in the decomposition of the tangent space $T_pM=\oplus_{i=1}^n L_i$ to $M$ at $p$, we chose an orientation of each $L_i$ so that $S^1$ acts on $L_i$ with positive weight $w_{pi}$. In this section, we allow any orientation of $L_i$; the circle acts on $L_i$ with weight $w_{pi}$, which is now a (non-zero) integer. Let $\epsilon(p)=+1$ if the orientation on $M$ agrees on the orientation on $\oplus_{i=1}^n L_i$, and let $\epsilon(p)=-1$ otherwise.

For an ordered pair $(a,\{a_1,\cdots,a_n\})$ where $a \in \{-1,+1\}$ and $\{a_1,\cdots,a_n\}$ is a multiset of non-zero elements of $\mathbb{Z}^k$, we define an equivalence relation as follows:
\begin{itemize}
\item $(a,\{a_1,\cdots,a_i,\cdots,a_n\})$ is equivalent to $(-a,\{a_1,\cdots,-a_i,\cdots,a_n\})$.
\end{itemize}

With the equivalence relation, we define the \textbf{fixed point data} of $p$ to be the equivalance class $[\epsilon(p),w_{p1},\cdots,w_{pn}]$ of the ordered pair $(\epsilon(p),\{w_{p1},\cdots,w_{pn}\})$. If all $w_{pi}$ are positive, this agrees with the fixed point data of $p$ in the introduction. The \textbf{fixed point data} of $M$ is a collection of the fixed point data of the fixed points of $M$. 

\begin{theorem} \label{t62} \cite{J6}
Let the circle group $S^1$ act on a 6-dimensional compact oriented manifold $M$ with isolated fixed points. To the fixed point data $\Sigma_M$ of $M$, we can apply a combination of the following operations to convert $\Sigma_M$ to the empty collection.
\begin{enumerate}
\item[(1)] Remove $[+,A,B,C]$ and $[-,A,B,C]$ together.
\item[(2)] Remove $[\pm,A,B,C]$ and $[\mp,C-A,C-B,C]$, and add $[\pm,A,B-A,C-A ]$ and $[\mp, B,B-A,C-B]$, where $0<A<B<C$.
\item[(3)] Remove $[\pm,A,B,C]$ and $[\pm,A,C-B,C]$, and add $[\pm, C-B,C-A,A]$, $[\pm, C-B,B,A]$, $[\pm, C-B,A-B,A]$, and $[\mp, C-A,A-B,A]$, where $0<A,B<C$ and $A \neq B$.
\item[(4)] Remove $[\pm,A,A,C]$ and $[\pm,A,C-A,C]$, and add $[\pm,C-A,C-2A,A]$, $[\pm,C-A,A,A]$, $[\pm,C-A,A,A]$, $[\mp,C-2A,A,A]$, where $0<A<C$.
\item[(5)] Remove $[\pm, C, A, A]$ and $[\mp, C, C-A, C-A]$, and add $[\pm,C-A,C-2A,A]$, $[\pm,C-A, A, A]$, $[\pm,C-A, A, A]$, $[\mp,C-2A, A, A]$, $[\pm, A, C-2A, C-A]$, $[\mp,A,C-A,C-A]$, $[\mp,A,C-A,C-A]$, $[\mp, C-2A, C-A, C-A]$, where $0<A<C$.
\end{enumerate}
There is a definite procedure that this ends in a finite number of steps.
\end{theorem}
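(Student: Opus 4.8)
The statement is due to \cite{J6} (proved there for any $6$-dimensional compact oriented $S^1$-manifold with isolated fixed points); I sketch its proof and note how concrete it becomes in our setting. The plan is to read each of the five operations as the effect, on fixed point data, of an \emph{equivariant connected sum} performed at one or two fixed points. The underlying move is this: if $p$ is an isolated fixed point of $M$ and $q$ an isolated fixed point of another $6$-dimensional compact oriented $S^1$-manifold $N$ with the same weights as $p$ and with $\epsilon(q)=-\epsilon(p)$, one deletes invariant disk neighbourhoods of $p$ and $q$ and glues the complements along the resulting invariant $S^5$'s to obtain a compact oriented $S^1$-manifold $M \# N$ whose fixed point data is $\Sigma_M \cup \Sigma_N$ with the data of $p$ and $q$ deleted. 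Carrying this out at two pairs simultaneously deletes two fixed points of $M$ and retains whatever is left of $N$.

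First I would fix the models $N$. For operation (1), $N$ is a rotation of $S^6$ of weights $\{A,B,C\}$: it has exactly the two fixed points $[+,A,B,C]$ and $[-,A,B,C]$ (Example \ref{e28}), so gluing $M$ to it at both points cancels both pairs and adds nothing. For operation (2), $N$ is $\mathbb{CP}^3$ with a linear action of weights $\{A,B,C\}$ and an orientation chosen to match signs; by Example \ref{e29} its four fixed points carry, after relabelling of weights, the data $[+,A,B,C]$, $[-,A,B-A,C-A]$, $[+,B,B-A,C-B]$, $[-,C,C-A,C-B]$, and gluing $M$ to this model at two of these four points removes $[\pm,A,B,C]$ and $[\mp,C-A,C-B,C]$ from $M$ and adds the remaining two, which is precisely operation (2); the hypothesis $0<A<B<C$ is exactly what makes all the displayed weights positive. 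Operations (3)--(5) are connected sums with the $6$-dimensional analogues of the Hirzebruch surfaces --- the $\mathbb{CP}^2$-bundles $\mathbb{P}(\mathcal{O}\oplus\mathcal{O}(k_1)\oplus\mathcal{O}(k_2))$ over $\mathbb{CP}^1$, which carry linear $S^1$-actions with six fixed points --- and with $\mathbb{CP}^3$ blown up at a fixed point as in Example \ref{e210}, taken with both orientations and possibly composed; reading off the fixed points that survive the gluing reproduces the listed replacements, with the inequalities $0<A,B<C$, $A\neq B$ and so on coming again from positivity of the new weights.

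Next I would run the reduction. If $\Sigma_M$ is nonempty, let $w$ be the largest weight that occurs; by Lemma \ref{l22} it occurs an even number of times. Analysing how $w$ can sit inside a $6$-dimensional fixed point datum --- this is where the special multigraph of Lemma \ref{l35} and the case analysis of Section \ref{s4}, suitably adapted, come in --- one shows that a pair of fixed points carrying $w$ can always be chosen to which one of operations (1)--(5) applies, and in such a way that every weight at every newly created fixed point is strictly smaller than $w$. Iterating removes $w$ from the collection entirely; descending on $w$, one is reduced to the case where every weight is $1$, and then $\textrm{sign}(M)=0$ together with Lemma \ref{l28} gives equally many $[+,1,1,1]$ and $[-,1,1,1]$, which operation (1) deletes in pairs. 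In the case $|M^{S^1}|=4$ this is immediate from Theorem \ref{t11}: the fixed point data is either two cancelling pairs, killed by two applications of operation (1), or the $\mathbb{CP}^3$-type data of Case (2) of Theorem \ref{t11}, which a single application of operation (2) turns into two cancelling pairs.

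The main obstacle is twofold. First, one must prove that operations (1)--(5) really exhaust the ways a maximal weight can be paired off in a $6$-dimensional fixed point datum, equivalently that the few standard models above suffice; establishing this combinatorial completeness is the substance of \cite{J6}. Second, since operations (3)--(5) increase the number of fixed points, termination cannot be measured by that count, and one must exhibit an honest monovariant --- the natural candidate being the lexicographic pair consisting of the largest weight and its multiplicity, possibly refined further --- and verify that it strictly decreases along every branch of the procedure, including the $\mathbb{CP}^3$- and Hirzebruch-type branches; this bookkeeping is the delicate point behind the termination claim.
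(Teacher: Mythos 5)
This theorem is quoted from the cited paper \cite{J6}; the present paper gives no proof of the general statement, and what Section \ref{s7} actually verifies is only its specialization to four fixed points, where your reduction (Operation (1) twice in Case (1) of Theorem \ref{t11}; Operation (2) with $A=a$, $B=a+b$, $C=a+b+c$ followed by Operation (1) twice in Case (2)) coincides exactly with the paper's. Your sketch of the general argument also matches the paper's own one-sentence description of the method of \cite{J6} --- successive equivariant connected sums with $M$ itself, with $\mathbb{CP}^3$, and with $6$-dimensional analogues of the Hirzebruch surfaces --- and you are right that the two genuinely hard points, the completeness of the list of operations and the termination of the procedure, live entirely in \cite{J6} and cannot be checked against anything in this paper.
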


We check Theorem \ref{t62} for the case of 4 fixed points. Let the circle act on a 6-dimensional compact oriented manifold $M$ with 4 fixed points. Suppose that Case (1) of Theorem \ref{t11} holds for the fixed point data of $M$. That is, the fixed point data of $M$ is
\begin{center}
$[+,a,b,c],[-,a,b,c],[+,d,e,f],[-,d,e,f]$
\end{center}
for some positive integers $a$, $b$, $c$, $d$, $e$, and $f$. We apply Operation (1) of Theorem \ref{t62} twice to convert the fixed point data of $M$ to the empty collection, first by removing $[+,a,b,c]$ and $[-,a,b,c]$ and second by removing $[+,d,e,f]$ and $[-,d,e,f]$.

Next, suppose that Case (2) of Theorem \ref{t11} holds for the fixed point data of $M$; the fixed point data of $M$ is
\begin{center}
$[+,a,a+b,a+b+c], [-,a,b,b+c], [+,b,c,a+b], [-,c,b+c,a+b+c]$. 
\end{center}
for some positive integers $a$, $b$, and $c$. We apply Operation (2) of Theorem \ref{t62} to remove $[+,a,a+b,a+b+c]$ and $[-,c,b+c,a+b+c]$, and add $[+,a,b,b+c]$ and $[-,a+b,b,c]$ to have a collection
\begin{center}
$[-,a,b,b+c]$, $[+,b,c,a+b]$, $[+,a,b,b+c]$, $[-,a+b,b,c]$.
\end{center}
Next, we apply Operation (1) of Theorem \ref{t62} twice to the above collection, first to remove $[-,a,b,b+c]$ and $[+,a,b,b+c]$ and second to remove $[+,b,c,a+b]$ and $[-,a+b,b,c]$, to reach the empty collection.

\end{document}